\documentclass{scrartcl}

\usepackage{graphicx}
\usepackage{pythontex}
\usepackage{url}
\usepackage{mathtools}
\usepackage{hyperref}
\usepackage{amsmath}
\usepackage{amssymb}
\usepackage{bbm}
\usepackage{amsthm}
\usepackage{mathrsfs} 
\usepackage{paralist}
\usepackage{comment}

\DeclareMathOperator{\supp}{supp}
\DeclareMathOperator{\Law}{Law}

\newtheorem{thm}{Theorem}[section]
\newtheorem*{thm*}{Main Theorem}
\newtheorem{defi}[thm]{Definition}
\newtheorem{lem}[thm]{Lemma}
\newtheorem{rmk}[thm]{Remark}
\newtheorem{prop}[thm]{Proposition}

\newcommand{\N}{\mathbb{N}}
\newcommand{\Z}{\mathbb{Z}}

\newcommand{\R}{\mathbb{R}}
\newcommand{\C}{\mathbb{C}}
\newcommand{\T}{\mathbb{T}}
\newcommand{\calF}{\mathcal{F}}

\title{A Variational Characterization of a Gibbs Measure related to the Aviles-Giga Functional}
\author{Jean-Dominique Deuschel, Clara Thierbach, and Barbara Zwicknagl}
\begin{document}

\maketitle
\textbf{Abstract. }We derive explicit characterizations of Gibbs-measures related to the Aviles-Giga functional. We build on the variational approach presented by Barashkov and Gubinelli \cite{Barashkov_2020} in the context of the $\Phi_3^4$-model.
\thispagestyle{empty}

\section{Introduction}
We consider the energy functional
\begin{eqnarray}\label{eq:H}
  H(\varphi)\coloneq \int_{\mathbb{T}^d}\left(|\nabla\varphi|^4-|\nabla\varphi|^2+|\Delta\varphi|^2\right)\,dx
\end{eqnarray}
on the $d$-dimensional torus $\T^d$.
This functional is closely related to the well-known Aviles-Giga functional on the torus $\T^d$, given by 
\begin{eqnarray}\label{eq:aviles-giga}
J_{AG}(\varphi):=\int_{\T^d} \left((1-|\nabla \varphi|^2)^2+\tau^2|D^2\varphi|^2\right)\,dx =\int_{\T^d} \left((1-|\nabla \varphi|^2)^2+\tau^2|\Delta \varphi|^2\right)\,dx.
\end{eqnarray}
More precisely, both are functionals of the form $\int_{\T^d}\left(W(\nabla \varphi)+\tau^2 |\Delta \varphi|\right)\,dx$, where the set of minimizers of $W$ is given by a sphere $|v|=const$.
 Therefore, they serve as 
models for Landau theories of phase transitions with a vectorial order parameter $\nabla \varphi$. The functional \eqref{eq:aviles-giga} was introduced in \cite{aviles-giga:1987} and  has been extensively studied due to its various applications to models in materials sciences. The latter include for example models for thin magnetic films, liquid crystals,
as well as applications to blistering in thin films (see e.g. \cite{jin-kohn:2000,kohn:2007,ortiz-gioia:1994} and the references therein). \\

In this paper, we aim to contribute to the study of the functional \eqref{eq:H} by characterizing explicitly the related Gibbs measure in a variational setting, following the seminal approach developed in \cite{Barashkov_2020} in the context of the $\Phi_d^4$-model. Roughly speaking, the functional $H$ can be related to a higher-order variant of the $\Phi_d^4$-model with the scalar function $v$ replaced by the vectorial quantity $\nabla v$.
\\
Let us briefly outline the approach from \cite{Barashkov_2020} adapted to our setting. We aim to characterize the Gibbs measure that is heuristically written as 
\begin{align*}
    \nu(d\varphi) =\mathscr{Z}^{-1}e^{-H(\varphi)}\mathscr{L}(d\varphi)
\end{align*}
with a normalization constant $\mathscr{Z}$ and the energy functional \eqref{eq:H}. 
 Here, $\mathscr{L}$ would play the role of a Lebesgue measure on the space of distributions, which does not exist on an infinite dimensional space. \\
 To have a closer connection to the $\Phi^4_d$ model we first consider the Hamiltonian 
 \begin{align}\label{eq:firstHamiltonian}
    H(\varphi)\coloneq \int_{\mathbb{T}^d}\left(|\nabla\varphi|^4+|\nabla\varphi|^2+|\Delta\varphi|^2\right)\ dx
\end{align}
 for $d=2$ and comment on the other Hamiltonian \eqref{eq:H} and the case $d=3$ afterwards.\\
 To overcome the difficulty due to the missing Lebesgue measure we rewrite the quadratic part of the Hamiltonian as a Gaussian measure $\theta$ with covariance $(-\Delta+\Delta^2)^{-1}$. This measure is supported in $\mathscr{C}^{1-\kappa}(\mathbb{T}^2)$ for any $\kappa>0$. (For the readers' convenience, we collect some details in Appendix \ref{GaussianMeasure}).\\
 A more severe problem, however, is that non-linear actions are not well-defined on distributions. Quadratic actions could be handled through a Gaussian measure as in the previous paragraph, but $|\nabla\cdot|^4$ is neither linear nor quadratic. We solve this problem with approximating the distributions by cutting high frequencies in the Fourier space, i.e., with a truncation parameter $T>0$, we consider 
 \begin{align*}
     \tilde{\nu}_T(d\varphi)=\mathscr{Z}_T^{-1}\ e^{-\int_{\mathbb{T}^2}|\nabla\varphi_T|^4\ dx} \theta(d\varphi)
 \end{align*}
which is a well defined expression.
 In order to take the limit $T\to\infty$ we need to renormalize the Hamiltonian and subtract the divergent parts of the singular term $|\nabla\varphi|^4$. We will see later that the renormalization is of order $|\nabla\varphi|^2$, which brings us to the measures
 \begin{align*}
     \nu_T(d\varphi)=\frac{e^{-\int_{\mathbb{T}^2}|\nabla\varphi_T|^4-\mathcal{O}_T(|\nabla\varphi_T|^2)\ dx}}{\mathscr{Z}_T}\theta(d\varphi).
 \end{align*}
Here the constants in $\mathcal{O}_T(|\nabla\varphi_T|^2)$ differ for every $T\in[0,\infty)$. \\
Since we want to determine $\nu$ as the weak limit of $(\nu_T)_T$, the aim of this paper is to show the convergence of these measures. We also get an explicit description for the limit. \\\\
In this regard we define for $f\in C(\mathscr{C}^{1-\kappa}({\mathbb{T}^2});\R)$ with linear growth the free energy 
\begin{align}\label{freeenergy}
    W_T(f)\coloneq - \log\int_{\mathcal{S}'}\exp(-V^f_T(\varphi_T))\ \theta(d\varphi)
\end{align}
with $V^f_T(\varphi)\coloneq f(\varphi)+\int_{\mathbb{T}^2}(|\nabla\varphi(x)|^4- \mathcal{O}_T(|\nabla\varphi(x)|^2))\ dx$. If one shows the convergence of these energies to a limit $W(f)$ we obtain tightness of $(\nu_T)_T$, hence convergence of a subsequence, and an explicit characterization for $\nu$ by its Laplace transform
 \begin{align*}
     \int_{\mathcal{S}'(\Lambda)}e^{f(\varphi)}\ \nu(d\varphi)=\exp(-(W(f)-W(0))).
 \end{align*}
In this way, we reduce the question of the convergence of $(\nu_T)_T$ to the convergence of $(W_T(f))_T$. 
To show this convergence we first derive the variational formulation, heuristically written as
\begin{align}\label{eq:introductionF}
    W_T(f)=\inf_u F_T(u).
\end{align}
Note that the functionals $(F_T)_T$ depend on $f$, and the explicit form of $F_T$ and its domain will be elaborated in the subsequent sections, see \eqref{eq:FT}. \\
To argue for the convergence of these infima (see \eqref{eq:introductionF}), we make use of the fundamental theorem of $\Gamma$-convergence. We consider $\Gamma$-convergence on a topological space which is not necessarily equipped with a metric. For details and proofs we refer to \cite{maso1993introduction}.\\
\begin{defi}
    Let $\mathcal{T}$ be a topological space satisfying the first axiom of countability and let $F,F_n:\mathcal{T}\to[-\infty,\infty]$
    \begin{enumerate}
        \item 
    We say that the sequence of functionals $(F_n)_n$ $\Gamma$-converges to $F$ if
    \begin{itemize}
        \item ($\liminf$-inequality) For every sequence $x_n\to x$ in $\mathcal{T}$
        \begin{align*}
            F(x)\leq\liminf_{n\to\infty}F_n(x_n),
        \end{align*}
        \item($\limsup$-inequality) For every point $x$ there exists a sequence $x_n\to x$ in $\mathcal{T}$(recovery sequence) such that
        \begin{align*}
            F(x)\geq \limsup_{n\to\infty}F_n(x_n).
        \end{align*}
    \end{itemize}

    \item A sequence of functionals $(F_n)_n$ is called equicoercive if there exists a compact set $\mathcal{K}\subset\mathcal{T}$ such that for all $n\in\N$
    \begin{align*}
        \inf_{x\in\mathcal{K}}F_n(x)=\inf_{x\in\mathcal{T}}F_n(x).
    \end{align*}
    \end{enumerate}
\end{defi}
\begin{thm}\label{minimizer}  Let $\mathcal{T}$ be a topological space satisfying the first axiom of countability and let $F,F_n:\mathcal{T}\to[-\infty,\infty]$.
    If $(F_n)_n$ $\Gamma$-converges to $F$ and $(F_n)_n$ is equicoercive, then $F$ attains its minimum and 
    \begin{align*}
        \min_\mathcal{T}F=\lim_{n\to\infty}\inf_\mathcal{T}F_n.
    \end{align*}
\end{thm}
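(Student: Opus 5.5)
Write $m_n := \inf_{\mathcal{T}} F_n$. The plan is the classical two-inequality argument: prove $\limsup_n m_n \le F(x)$ for every $x\in\mathcal{T}$, and produce a point $x^*$ with $F(x^*)\le \liminf_n m_n$. Together these give $F(x^*)\le\liminf m_n\le\limsup m_n\le\inf_{\mathcal{T}}F\le F(x^*)$. So $x^*$ is a minimizer and $\min F=\lim m_n$.

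\emph{Upper bound.} Fix $x\in\mathcal{T}$ and take a recovery sequence $x_n\to x$ from the $\limsup$-inequality. Since $m_n\le F_n(x_n)$, we get $\limsup_n m_n\le \limsup_n F_n(x_n)\le F(x)$. Taking the infimum over $x$ yields $\limsup_n m_n\le\inf_{\mathcal{T}}F$.

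\emph{Lower bound via compactness.} This is the step needing care, since $\mathcal{T}$ is only first countable and not metrizable. We choose a subsequence $(n_k)$ with $m_{n_k}\to\liminf_n m_n=:\ell$. If $\ell=+\infty$ there is nothing to show, because then every $F$-value is $\le\limsup m_n=+\infty$ trivially and any point minimizes. By equicoercivity $m_{n}=\inf_{\mathcal{K}}F_{n}$, so we can pick $y_k\in\mathcal{K}$ with $F_{n_k}(y_k)\le m_{n_k}+1/k$, or $F_{n_k}(y_k)\le -k$ if $m_{n_k}=-\infty$.

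We then need a cluster point $x^*\in\mathcal{K}$ of $(y_k)$ together with a \emph{sequence} converging to it. Compactness gives a cluster point $x^*$ (every sequence in a compact space has one). First countability at $x^*$ then lets us extract a subsequence $y_{k_j}\to x^*$: take a decreasing countable neighbourhood base $U_j$ and choose $k_j>k_{j-1}$ with $y_{k_j}\in U_j$. This is the main technical point, and it is exactly why the first axiom of countability is assumed.

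To apply the $\liminf$-inequality, which is stated for full sequences, we extend $(y_{k_j})$ to a sequence $(x_n)$ converging to $x^*$. Set $x_{n_{k_j}}=y_{k_j}$, and for the other indices $n$ with $n_{k_{j}}<n<n_{k_{j+1}}$ set $x_n:=$ a point of $U_j$, for instance $x^*$ itself. Then $x_n\to x^*$, and the $\liminf$-inequality gives
\begin{align*}
F(x^*)\le\liminf_n F_n(x_n)\le\liminf_j F_{n_{k_j}}(y_{k_j})\le\lim_j\bigl(m_{n_{k_j}}+1/k_j\bigr)=\ell,
\end{align*}
with the obvious modification when $\ell=-\infty$, in which case $F(x^*)=-\infty$. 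Combined with the upper bound this closes the chain of inequalities above, so $F$ attains its minimum at $x^*$ and $\min_{\mathcal{T}}F=\lim_n\inf_{\mathcal{T}}F_n$.
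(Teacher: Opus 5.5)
Your proof is correct and complete. The paper gives no proof of its own; it defers to Dal Maso's monograph. There the result is obtained for the topological, neighbourhood-based $\Gamma$-limit by a different mechanism. The $\Gamma$-limit is automatically lower semicontinuous, and a finite-subcover argument on the compact set gives $\inf_{\mathcal{K}}F\le\liminf_n\inf_{\mathcal{K}}F_n$. Combined with $\limsup_n\inf_{\mathcal{T}}F_n\le\inf_{\mathcal{T}}F$, this forces $\inf_{\mathcal{K}}F=\inf_{\mathcal{T}}F$, and a lower semicontinuous function attains its minimum on a compact set. In that route, first countability is needed only to identify the topological $\Gamma$-limit with the sequential definition the paper states.

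You instead work directly with the paper's sequential definitions, so you must do by hand what the covering argument provides for free. You take a cluster point of almost-minimizers in $\mathcal{K}$. You use a decreasing countable neighbourhood base to extract a convergent subsequence. You then pad it to a full sequence converging to $x^*$, so that the $\liminf$-inequality, which is stated only for whole sequences, applies. Your route keeps the proof self-contained relative to the definitions actually given in the paper, and it shows exactly where the first-countability hypothesis is used. The cost is a little bookkeeping: the indices $n<n_{k_1}$ can be filled arbitrarily, and you already handle the cases $\ell=\pm\infty$ and $m_{n_k}=-\infty$.
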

We make use of these concepts and show equicoercivity and $\Gamma$-convergence for the functionals $(F_T)_T$ in \eqref{eq:introductionF}.\\\\
Following this outline the two main steps are to find a variational formulation for the free energies and to show $\Gamma$-convergence for the resulting functionals. Regarding the first point we present an interpretation in the sense of convex analysis using the Fenchel biconjugate. For the $\Gamma$-convergence we need to choose renormalization constants. Since the gradient has a vectorial structure the renormalization differs from the scalar case in the $\Phi^4_2$ model.

\subsection{Notation and setting}
We will use the following notational conventions. 
For any function $h:\R\to\R$, $h(D)$ is Fourier multiplier operator with symbol $h$, i.e., $h(D)f=\calF^{-1}h\calF f$, where $\calF$ denotes the Fourier transform on the torus.\\
If no domain is given for a norm or an integral, we always mean $\T^2$.\\
For any function space $X(\T^d)$, we write $\dot{X}(\T^d)$ for the subspace of functions in $X(\T^d)$ with zero mean.\\
The notation $\sim$ and $\lesssim$, respectively, indicates that an (in)equality holds up to uniform constants that do not depend on the relevant parameters. \\
For any function $f$ on $\mathbb{T}^d$ we write $\partial_if$ for the $i$-th partial derivative and if $f$ is vector-valued $\nabla\cdot f$ for the divergence. For functions $g$ on $X\times [0,\infty)$, where $X$ is an arbitrary space, we write $\dot{g}$ for the partial derivative in $t\in [0,\infty)$.\\
For a vector $v\in\R^d$ and $k\in\N$, we set $v^k=|v|^{k-1}v$ if $k\in\N$ is odd, and $v^k=|v|^k$ if $k\in\N$ is even.\\
For standard notation regarding Besov spaces, we refer to Section \ref{sec:besov}.
\subsection{Gaussian Process}

 We first construct a process $Y$ with values in $\mathscr{C}^{\frac{4-d}{2}-\kappa}(\mathbb{T}^d)$ such that for fixed $t$ the random variable $Y_t$ is distributed as the $\varphi_t$ in \eqref{freeenergy}. We introduce the setting, adapting  \cite{Barashkov_2020} to our Hamiltonian.\\
 Let $\alpha<-\frac{d}{2}$. Let us recall that we call a process $X$ on a probability space $(\Omega,\mathscr{B},\mathbb{P})$ an $\dot{H}^\alpha(\mathbb{T}^d)$-valued cylindrical Brownian motion if there exist $\C$-valued independent Brownian motions $(B^n)_n$ and an orthonormal basis $(e_n)_n$  in $\dot{L}^2(\mathbb{T}^d)$  such that 
 \begin{align*}
     X_t=\sum_n e_n\ B^n_t.
 \end{align*}
We note that due to our choice of $\alpha<-\frac{d}{2}$ this converges in $\dot{H}^\alpha$ since
    $\sum_{n}|e_n|_{H^\alpha}^2 <\infty$. On the right-hand side only the Brownian motions depend on $\omega\in\Omega$.
\\\\
Here, we take $\Omega=C(\R^+;\dot{H}^\alpha(\mathbb{T}^d))$, $\mathscr{B}$ the Borel $\sigma$-algebra on $\Omega$,  and $\mathbb{P}$ the Wiener measure such that the canonical process $X$ is of the form
\begin{align*}
    X_t:\mathbb{T}^d\to\R,\quad x\mapsto\sum_{n\in \Z^d\setminus\{0\}}e^{i\langle x,n\rangle}B_t^n
\end{align*}
for a collection of $\C$-valued Brownian motions $(B_t^n)_{n\in \Z^d\setminus\{0\}}$ with $\Bar{B}_t^{n}=B_t^{-n}$, such that  $B_t^n,B_t^m$ are independent for all $m\neq n$.\\
Next, we introduce a notion of integration with respect to this process. For that we define $\zeta\in C^\infty(\R_+,[0,1])$ as a decreasing function such that $\zeta(s)=1$ for $s\leq \frac{1}{2}$ and $\zeta(s)=0$ for $s\geq 1$. 
Let $\zeta_t(x)\coloneq \zeta(\frac{(1+|x|^2)^\frac{1}{2}}{t})$ and  $\sigma_t(x)\coloneq \left(\frac{d}{dt}(\zeta^2_t(x))\right)^\frac{1}{2}$  for $x\in\R^d$, see Fig. \ref{fig:zeta} and \ref{fig:sigma}.
\begin{figure}[h]
    \centering
    \begin{minipage}{0.45\textwidth}
        \centering
        \includegraphics[width=\linewidth]{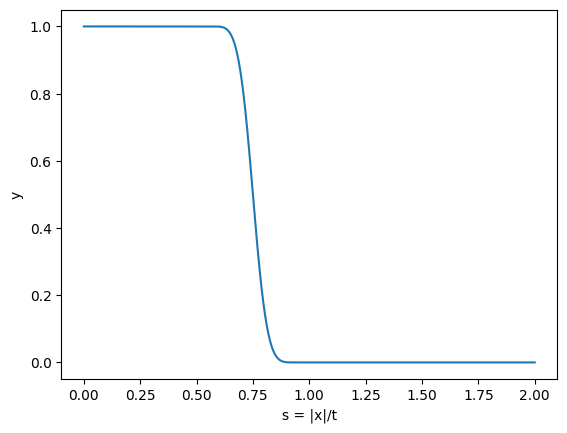}
        \caption{Function $\zeta_t$}
        \label{fig:zeta}
    \end{minipage}
    \hfill
    \begin{minipage}{0.45\textwidth}
        \centering
        \includegraphics[width=\linewidth]{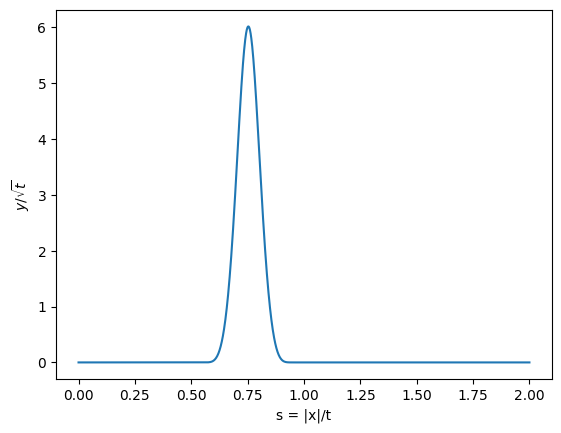}
        \caption{Function $\sigma_t$}
        \label{fig:sigma}
    \end{minipage}
\end{figure}\\
Using the notation \begin{eqnarray}\label{eq:Jt}
    J_t(f)\coloneq \frac{\sigma_t(D)}{(|D|^2+|D|^4)^\frac{1}{2}}f
\end{eqnarray} for a function $f$ we define the centered Gaussian process
\begin{align}\label{eq:Y}
    Y_t&\coloneq\int_0^tJ_s\ dX_s \coloneq \sum_{n\in \Z^d\setminus\{0\}}\int_0^t\frac{\sigma_s(n)}{(|n|^2+|n|^4)^\frac{1}{2}}e^{i\langle \cdot,n\rangle}\ dB_s^n.
\end{align}
Note that this is also well-defined as an $\dot{H}^\alpha$-valued process.\\\\
It remains to show that the values of $Y_t$ are even in $\mathscr{C}^{\frac{4-d}{2}-\kappa}(\T^d)$ for any $\kappa>0$. We use the Besov embedding $B_{p,p}^{\frac{4-d}{2}}(\T^d)\hookrightarrow B_{\infty,\infty}^{\frac{4-d}{2}-\frac{d}{p}}(\T^d)=\mathscr{C}^{\frac{4-d}{2}-\frac{d}{p}}(\T^d)$. Since $Y_t$ is a centered Gaussian process, we have by It\^{o}'s isometry (see \cite[Proposition 2.15]{goodair}) and boundedness of $\zeta$, for all $p\in[1,\infty)$ and all $\beta<\frac{4-d}{2}$
\begin{align*}
    \mathbb{E}\left[\Vert Y_t\Vert_{B_{p,p}^\beta}^p\right]=&\sum_j2^{j\beta p}\mathbb{E}\left[ \Vert \Delta_jY_t\Vert^p_{L^p} \right]
    \sim \sum_j2^{j\beta p}\int_{\mathbb{T}^d}\mathbb{E}\left[ |\Delta_jY_t(x)|^2 \right]^\frac{p}{2}\ dx\\
    =&\sum_j2^{j\beta p}\int_{\mathbb{T}^d}\mathbb{E}\left[ \Big|\Delta_j\sum_{n\in\Z^d\setminus\{0\}}\int_0^t\frac{\sigma_s(n)}{(|n|^2+|n|^4)^\frac{1}{2}}e^{i\langle x,n\rangle}\,dB_t^n\Big|^2 \right]^\frac{p}{2}\ dx\\
    \lesssim& \sum_j2^{j\beta p}\left(\sum_{n\in\Z^d\setminus\{0\}}\frac{\rho^2_j(n)}{|n|^2+|n|^4}\right)^{\frac{p}{2}}
    \lesssim \sum_j2^{j\beta p} 2^{jp\frac{d-4}{2}}<\infty.
\end{align*}
Letting $p\to\infty$, we obtain the claimed H\"older regularity $\mathscr{C}^{\frac{4-d}{2}-\kappa}(\T^d)$. \\\\
%The process $Y$ is a centered Gaussian process. 
One would expect that $Y_t$ is normally distributed with expectation $0$ and variance $\langle\zeta_t(D)\varphi,(-\Delta+\Delta^2)^{-1}\zeta_t(D)\varphi\rangle_{L^2}$, which also describes the measure $(\zeta_t(D))_*\theta$. We verify this by computing the covariance, i.e., we have for any $\varphi,\psi\in\mathcal{S}({\mathbb{T}^d})$ with $\hat{\varphi}(n)\coloneq\int_{\mathbb{T}^d}e^{i\langle n,x\rangle}\varphi(x)\ dx$

\begin{align}\label{eq:var}
    &\mathbb{E}[\langle Y_t,\varphi\rangle_{L^2}\overline{\langle Y_s,\psi\rangle_{L^2}}]= \int_\Omega\left(\int_{\mathbb{T}^d} Y_t(\omega)(\lambda)\overline{\varphi(\lambda)} \ d\lambda \overline{\int_{\mathbb{T}^d} Y_s(\omega)(\lambda)\overline{\psi(\lambda)} \ d\lambda}\right)  \mathbb{P}(d\omega)\nonumber\\
    &=\int_\Omega\left(\int_{\mathbb{T}^d} Y_t(\omega)(\lambda)\varphi(\lambda) \ d\lambda \int_{\mathbb{T}^d} \overline{Y_s(\omega)(\lambda)}\psi(\lambda) \ d\lambda\right)  \mathbb{P}(d\omega)\nonumber\\
    &= \int_\Omega\sum_{n,m\in\Z^d\setminus\{0\}}\left(\int_0^t\frac{\sigma_u(n)}{(|n|^2+|n|^4)^\frac{1}{2}} \hat{\varphi}(n)\ dB_u^n(\omega) \right)\left( \int_0^s\frac{\sigma_u(m)}{(|m|^2+|m|^4)^\frac{1}{2}} \hat{\psi}(m)\ dB_u^{-m}(\omega) \right)\mathbb{P}(d\omega)\nonumber\\
    &= \sum_{n\in\Z^d\setminus\{0\}}\int_\Omega \int_0^{\min(s,t)}\frac{\sigma^2_u(n)}{|n|^2+|n|^4}\ du \ \mathbb{P}(d\omega)\hat{\varphi}(n)\overline{\hat{\psi}(n)}
    =\sum_{n\in\Z^d\setminus\{0\}} \frac{\zeta^2_{\min(s,t)}(n)}{|n|^2+|n|^4}\hat{\varphi}(n)\overline{\hat{\psi}(n)}.
\end{align}

By Plancherel's theorem and since $\lim\limits_{s\to\infty}\zeta_s(n)=1$, \eqref{eq:var} yields the expected variance in the limit $s\to\infty$.

\subsection{Outline of the paper}

The rest of the paper is structured as follows. We first focus on the Hamiltonian \eqref{eq:firstHamiltonian}. In Section \ref{sec:variationalform} we introduce the variational formulation for \eqref{freeenergy} putting this in a more general context within the framework of convex analysis. In Section \ref{sec:renormalization} we deal with the renormalization in two dimensions. We comment on the $\Gamma$-convergence in Section \ref{sec:gamma}. Section \ref{sec:AvilesGiga} connects the preceding theory with the Hamiltonian \eqref{eq:H} related to the Aviles-Giga functional. In Section \ref{sec:extension} we extend the theory to further related Hamiltonians in two and three dimensions.

\section{Variational Formula}\label{sec:variationalform}
In this section we obtain for all $f\in C(\mathscr{C}^{1-\kappa}({\mathbb{T}^d});\R)$ with linear growth a variational formula for
\begin{align*}
    W_T(f)\coloneq - \log\int_{\mathcal{S}'}\exp(-V^f_T(\varphi_T))\ \theta(d\varphi)
    =-\log\mathbb{E}\left[\exp(-V_T^f(Y_T)) \right]
\end{align*}
with $V^f_T(\varphi_T)\coloneq f(\varphi_T)+\int_{\mathbb{T}^d}(|\nabla\varphi_T(x)|^4- \mathcal{O}_T(|\nabla\varphi_T(x)|^2))\ dx$.\\\\ %\BZcom{Notation in Einleitung erkl\"aren}
We first consider the well-known equality (see e.g. \cite{alma991074785089705501})
\begin{align}\label{fenchel}
    -\log\left(\int_X\exp(-f(x))\ \mu(dx)\right) = \inf_{\gamma\in P_\mu(X)} \left\{\int_Xf(x)\ \gamma(dx)+\int_X\log\left(\frac{d\gamma}{d\mu}(x)\right)\ \gamma(dx)\right\}.
\end{align}
In this expression, $X$ is a measure space with the Borel $\sigma$-algebra, $\mu$ is a probability measure, $P_\mu(X)$ is the set of probability measures on $X$ that are equivalent to $\mu$, and $f$ is a bounded measurable function, and we denote by $\frac{d\gamma}{d\mu}$ the Radon-Nikodym density.\\
We will discuss an alternative view on formula \eqref{fenchel}, interpreting it in the context of convex analysis. 
Recall that the Fenchel conjugate of a function $F:N\to\overline{\R}$ on a normed space $N$ is defined as $F^\ast:N'\to\overline{\R}$, $F^\ast(\ell):=\sup_{x\in N}\left(\langle \ell,x\rangle -F(x)\right)$, where $N'$ denotes the dual space to $N$. Since $N$ is canonically embedded into its bidual space, with a slight abuse of notation, we can consider the restriction of the Fenchel biconjugate $F^{\ast\ast}:=(F^\ast)^\ast$ to $N$. For a convex and lower semi-continuous function $F$ it is well known that the function coincides with its Fenchel biconjugate on $N$.\\\
In the next lemma we collect some well-known preliminary results.
\begin{lem}\label{lem:F*}
  Let $\mu$ be a probability measure on $X$. Then the functional $F:L^\infty(X)\to \R$, $f\mapsto \log\left(\int_X\exp(f(x))\ \mu(dx)\right)$ is continuous, convex and for $\gamma\in P_\mu(X)\subset (L^\infty(X))'$ the Fenchel conjugate is of the form
  \begin{align*}
      F^\ast(\gamma)=\int_X\log\left(\frac{d\gamma}{d\mu}(x)\right)\ \gamma(dx).
  \end{align*}
\end{lem}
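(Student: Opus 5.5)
Three properties of $F$ and one conjugate formula have to be proved, and I will do them in order: continuity, then convexity, then the formula for $F^\ast$. The formula will come from a Jensen/Gibbs variational inequality, and the supremum will be attained at one explicit function.

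\emph{Continuity.} For $f,g\in L^\infty(X)$ we have $e^{f}\le e^{g}e^{\|f-g\|_\infty}$ pointwise. Integrating against $\mu$ and taking logarithms gives
\begin{align*}
|F(f)-F(g)|\le \|f-g\|_{L^\infty},
\end{align*}
so $F$ is $1$-Lipschitz. It is also finite everywhere, since $e^{-\|f\|_\infty}\le\int e^f\,d\mu\le e^{\|f\|_\infty}$.

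\emph{Convexity.} This follows from H\"older's inequality with exponents $1/\lambda$ and $1/(1-\lambda)$:
\begin{align*}
\int e^{\lambda f+(1-\lambda)g}\,d\mu\le \Big(\int e^f d\mu\Big)^{\lambda}\Big(\int e^g d\mu\Big)^{1-\lambda}.
\end{align*}
Taking logarithms gives $F(\lambda f+(1-\lambda)g)\le\lambda F(f)+(1-\lambda)F(g)$.

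\emph{Conjugate.} Fix $\gamma\in P_\mu(X)$ and set $\rho=\frac{d\gamma}{d\mu}>0$ $\mu$-a.e. The pairing is $\langle\gamma,f\rangle=\int f\,d\gamma$.

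For the upper bound $F^\ast(\gamma)\le\int\log\rho\,d\gamma$, take any $f\in L^\infty$. Since $\gamma\sim\mu$, we can write $\int e^f\,d\mu=\int e^{f}\rho^{-1}\,d\gamma$. Jensen's inequality for the concave logarithm under the probability measure $\gamma$ then gives
\begin{align*}
\log\int e^f\,d\mu\ \ge\ \int (f-\log\rho)\,d\gamma,
\end{align*}
that is, $\int f\,d\gamma-F(f)\le\int\log\rho\,d\gamma$. If $\int\log\rho\,d\gamma=+\infty$ there is nothing to prove. Otherwise $\log\rho\in L^1(\gamma)$: its negative part is integrable because $\int(\log\rho)_-\rho\,d\mu\le\sup_{s>0}s(\log s)_-<\infty$.

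For the lower bound, the bound above is sharp when $f=\log\rho$, which need not be bounded. I therefore use the truncations $f_n=\max(\min(\log\rho,n),-n)\in L^\infty$ and show
\begin{align*}
\int f_n\,d\gamma-\log\int e^{f_n}\,d\mu\ \longrightarrow\ \int\log\rho\,d\gamma .
\end{align*}

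\emph{Main obstacle: this truncation limit.} The first term converges by monotone convergence applied separately on $\{\rho\ge1\}$ and $\{\rho<1\}$. This also covers the case where the limit is $+\infty$, since the negative part stays integrable.

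For the second term, note that $e^{f_n}\to\rho$ pointwise and
\begin{align*}
e^{f_n}\le \max(\rho,1),
\end{align*}
which lies in $L^1(\mu)$ because $\mu$ is a probability measure. Dominated convergence gives $\int e^{f_n}\,d\mu\to\int\rho\,d\mu=1$, so $\log\int e^{f_n}\,d\mu\to0$.

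Together the two bounds give $F^\ast(\gamma)=\int\log\frac{d\gamma}{d\mu}\,d\gamma$, with the value $+\infty$ allowed.
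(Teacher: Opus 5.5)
Your proof is correct and follows essentially the same route as the paper's: H\"older for convexity, Jensen's inequality under $\gamma$ for the upper bound on $F^\ast$, and truncations of $\log\frac{d\gamma}{d\mu}$ for the lower bound, with only cosmetic differences such as using a Lipschitz estimate instead of dominated convergence for continuity. Your two-sided truncation $f_n=\max(\min(\log\rho,n),-n)$ is slightly more careful than the paper's $\log\frac{d\gamma}{d\mu}\wedge n$, which is bounded above but need not lie in $L^\infty(X)$, and your explicit handling of the negative part of $\log\rho$ and of the case $\int\log\rho\,d\gamma=+\infty$ makes the limit argument fully rigorous.
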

A proof can be found in \cite[Lemma 3.2.13]{deuschel2001large}, there the functional $F$ is defined on bounded continuous functions.  For the readers convenience we include a proof in the Appendix, see Lemma \ref{lem:F*proof}.\\\\
In the next theorem we characterize the above functional as its Fenchel biconjugate $F^{\ast\ast}$ and give an explicit formula for $F^{\ast\ast}$.
\begin{thm}
    Let $\mu$ be a probability measure on $X$ and $F:L^\infty(X)\to \R$,  $f\mapsto \log\left(\int_X\exp(f(x))\ \mu(dx)\right)$. Then
    \begin{align*}
        F(f)=F^{**}(f)=\sup_{\gamma\in P_\mu(X)}\left\{\int_X f(x)\ \gamma(dx) -\int_X\log\left(\frac{d\mu}{d\gamma}(x)\right)\ \gamma(dx)\right\},
    \end{align*}
    where $F^{**}$ denotes the Fenchel biconjugate of $F$. 
    In particular, it holds
    \begin{align*}
         -\log\left(\int_X\exp(-f(x))\ \mu(dx)\right) = \inf_{\gamma\in P_\mu(X)} \left\{\int_Xf(x)\ \gamma(dx)+\int_X\log\left(\frac{d\gamma}{d\mu}(x)\right)\ \gamma(dx)\right\}.
    \end{align*}
\end{thm}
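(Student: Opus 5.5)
The plan is to prove the identity directly, by a Jensen/Gibbs-measure argument, and to obtain the biconjugate statement from convex duality. Throughout, the penalty term is read as the relative entropy $\int_X\log\bigl(\frac{d\gamma}{d\mu}\bigr)\,d\gamma$, i.e.\ $F^\ast(\gamma)$ from Lemma \ref{lem:F*}. This is what the second display, the Fenchel formula \eqref{fenchel}, and the definition of $F^{**}$ all require, so I read the $\log\frac{d\mu}{d\gamma}$ in the first display as a typo for $\log\frac{d\gamma}{d\mu}$.

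\textbf{Step 1 (Fenchel--Moreau).} By Lemma \ref{lem:F*}, $F$ is convex and continuous on $L^\infty(X)$, hence lower semicontinuous. The Fenchel--Moreau theorem therefore gives $F=F^{**}$ on $L^\infty(X)$, where
\[
F^{**}(f)=\sup_{\ell\in(L^\infty)'}\bigl(\ell(f)-F^\ast(\ell)\bigr).
\]
Since $P_\mu(X)\subset(L^\infty(X))'$ and Lemma \ref{lem:F*} identifies $F^\ast$ on $P_\mu(X)$, restricting the supremum yields
\[
F(f)=F^{**}(f)\ \ge\ \sup_{\gamma\in P_\mu(X)}\Bigl\{\int_X f\,d\gamma-\int_X\log\Bigl(\tfrac{d\gamma}{d\mu}\Bigr)d\gamma\Bigr\}.
\]
This inequality can also be checked by hand. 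For $\gamma\in P_\mu(X)$, equivalence gives $\frac{d\mu}{d\gamma}>0$, so Jensen's inequality for the concave logarithm gives
\[
\int_X\bigl(f-\log\tfrac{d\gamma}{d\mu}\bigr)\,d\gamma=\int_X\log\Bigl(e^{f}\tfrac{d\mu}{d\gamma}\Bigr)d\gamma\le\log\int_X e^f\,d\mu=F(f).
\]

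\textbf{Step 2 (the supremum is attained).} Define the Gibbs measure $\gamma_f:=Z^{-1}e^{f}\mu$ with $Z=\int_X e^f\,d\mu$. Since $f\in L^\infty$, its density $e^f/Z$ is bounded above and below by positive constants. Hence $\gamma_f$ is a probability measure equivalent to $\mu$, i.e.\ $\gamma_f\in P_\mu(X)$, and its entropy is finite. Substituting $\log\frac{d\gamma_f}{d\mu}=f-\log Z$ gives
\[
\int_X f\,d\gamma_f-\int_X(f-\log Z)\,d\gamma_f=\log Z=F(f).
\]
Combined with Step 1, every inequality becomes an equality, so $F(f)=F^{**}(f)=\sup_{\gamma\in P_\mu(X)}\{\dots\}$, with the supremum attained at $\gamma_f$.

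\textbf{Step 3 (the ``in particular'').} Apply the identity to $-f\in L^\infty(X)$ and multiply by $-1$; this turns the supremum into the stated infimum.

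The main subtlety is interpretive rather than technical. To identify the full supremum defining $F^{**}$ with the supremum over $P_\mu(X)$, one would ordinarily need $F^\ast(\ell)=+\infty$ for every $\ell\in(L^\infty)'$ that is not a probability measure absolutely continuous with respect to $\mu$. This would require handling non-positive functionals, functionals with $\ell(1)\neq1$, and merely finitely additive functionals; the last case is the delicate one. The argument above avoids all of this: $F^{**}=F$ comes from Fenchel--Moreau, and the explicit maximizer $\gamma_f$ shows the restricted supremum already reaches $F(f)$. So the only points to check carefully are that $\gamma_f$ lies in $P_\mu(X)$, which holds because $f$ is bounded, and the Jensen step in Step 1.
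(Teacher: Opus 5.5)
Your proposal is correct and follows the paper's proof. Fenchel--Moreau via Lemma \ref{lem:F*}, with the supremum restricted to $P_\mu(X)$, gives $F(f)=F^{**}(f)\ge\sup_{\gamma\in P_\mu(X)}\{\cdots\}$; the Gibbs measure $e^{f}\mu/\int_X e^{f}\,d\mu$ attains $F(f)$; and the ``in particular'' follows by replacing $f$ with $-f$. Your reading of $\log\frac{d\mu}{d\gamma}$ in the first display as a typo for $\log\frac{d\gamma}{d\mu}$ is right, since the paper itself uses $F^\ast(\gamma)=\int_X\log\frac{d\gamma}{d\mu}\,d\gamma$ there, and your Jensen check and the verification that $\gamma_f\in P_\mu(X)$ are small additions the paper leaves implicit.
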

\begin{proof}
Let $f\in L^\infty(X)$ be arbitrary. Since by Lemma \ref{lem:F*}, $F$ is continuous and convex, we have that $F(f)=F^{\ast\ast}(f)$. We start by deriving a lower bound for $F^{\ast\ast}(f)$ using that the space of probability measures is a subset of $(L^\infty(X))'$. Using Lemma \ref{lem:F*}, in particular that 
\begin{eqnarray*}
F^\ast(\gamma)=\int_X\log\left(\frac{d\gamma}{d\mu}(x)\right)\ \gamma(dx)
\end{eqnarray*}
for $\gamma\in P_\mu(X)$, we obtain 
\begin{align*}
   F(f)&= F^{**}(f)=\sup_{\ell\in(L^\infty(X))'}\left(\langle \ell,f\rangle-F^\ast(\ell)\right)
   \geq \sup_{\gamma\in P_\mu(X)}\left(\langle \gamma,f\rangle-F^\ast(\gamma)\right)\\     &= \sup_{\gamma\in P_\mu(X)}\left\{\int_X f(x)\ \gamma(dx) -\int_X\log\left(\frac{d\mu}{d\gamma}(x)\right)\ \gamma(dx)\right\}.
\end{align*}
To show the other inequality, we consider the measure $\tilde{\gamma}=\frac{1}{\int_X\exp(f(x))\ \mu(dx)}\exp(f)\ \mu\in P_\mu(X)$ and compute
\begin{align*}
    &\int_Xf(y)\ \tilde\gamma(dy)-\int_X\log\left(\frac{d\tilde\gamma}{d\mu}(y)\right)\ \tilde\gamma(dy)\\
    &= \int_Xf(y)\ \tilde\gamma(dy)-\int_X\log\left(\frac{1}{\int_X\exp(-f(x))\ \mu(dx)}\exp(f(y))\right)\ \tilde\gamma(dy)\\
    &=\int_Xf(y)\ \tilde\gamma(dy)+\int_X\log\left(\int_X\exp(f(x))\ \mu(dx)\right)\ \tilde\gamma(dy)-\int_X\log\left(\exp(f(y))\right)\ \tilde\gamma(dy)\\
    &=\log\left(\int_X\exp(f(x))\ \mu(dx)\right).
\end{align*}
Finally, this yields
\begin{align*}
    -\log\left(\int_X\exp(-f(x))\ \mu(dx)\right) &=-F(-f)=-F^{**}(-f)\\
    &= -\sup_{\gamma\in P_\mu(X)}\left\{-\int_X f(x)\ \gamma(dx) -\int_X\log\left(\frac{d\mu}{d\gamma}(x)\right)\ \gamma(dx)\right\}
    \\
    &= \inf_{\gamma\in P_\mu(X)} \left\{\int_Xf(x)\ \gamma(dx)+\int_X\log\left(\frac{d\gamma}{d\mu}(x)\right)\ \gamma(dx)\right\}.
\end{align*}
\end{proof}
In the sequel, $X$ is a probability space that carries a cylindrical Brownian motion $W$ with values in a Hilbert space $H$ and $\mu$ is the Wiener measure. Moreover, for a bounded continuous function $g:C([0,\infty),H)\to \R$ we let $g\circ W$ play the role of the function $f$ in \eqref{fenchel}.\\
In this setting, we obtain the following simplified form for the variational problem. 

\begin{prop}\label{prop:Boue-dupuis}
    Let $g:C([0,\infty),H)\to\R$ be such that \[\int_X(|g(W(x))|+1)\exp(-g(W(x))\ \mu(dx)<\infty.\] Then 
    \begin{align*}
    &-\log\int_X \exp(-g(W(x)))\ \mu(dx)\\
    &=\inf_{v}\left\{\int_X\left(g\left(W(x)+\int_0^{\cdot}v_s(x)\ ds\right)+ \frac{1}{2}\int_0^\infty \Vert v_s(x)\Vert^2\ ds\right) \mu(dx)\right\},
\end{align*}
where the infimum is taken over drifts $v$ such that $v(x)\in L^2([0,\infty),H)$ for almost every $x\in X$. 
\end{prop}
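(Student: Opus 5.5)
This is the Boué–Dupuis formula. The plan is to derive it from the Gibbs variational principle just proved, identifying absolutely continuous changes of measure with drifts through Girsanov's theorem. Write $\mathcal{F}_t$ for the filtration generated by $W$ and $Z:=e^{-g(W)}/\int e^{-g(W)}\,d\mu$. The argument has two inequalities, an upper bound for the infimum over drifts and a matching lower bound. The integrability hypothesis ensures that every quantity below, in particular $\int g(W)\,Z\,d\mu$, is finite.

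\textbf{Upper bound ($\le$).} Fix an adapted drift $v$ with $\int_0^\infty\|v_s\|^2ds\in L^1(\mu)$; otherwise the right-hand side is $+\infty$ and there is nothing to show. After localizing by stopping times, Girsanov's theorem gives a measure $\mathbb{Q}$ under which $W^v:=W+\int_0^\cdot v_s\,ds$ has the law $\mu$ of a cylindrical Brownian motion. The relative entropy of $\gamma:=\Law_\mu(W^v)$ with respect to $\mu$ is at most $\tfrac12\mathbb{E}_\mu\int_0^\infty\|v_s\|^2ds$. This follows from the data-processing inequality for relative entropy, combined with the identity $\mathbb{E}_\mu[\log\frac{d\mu}{d\mathbb{Q}}]=\tfrac12\mathbb{E}_\mu\int\|v_s\|^2ds$, which holds because $\log\frac{d\mu}{d\mathbb{Q}}=\int v\,dW+\tfrac12\int\|v\|^2ds$ and the stochastic integral has zero mean. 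Applying the entropy formula of the previous Theorem with $f=g$ on path space then yields
\[
-\log\int e^{-g}\,d\mu\le \int g\,d\gamma+H(\gamma|\mu)\le \mathbb{E}_\mu\Big[g(W^v)+\tfrac12\int_0^\infty\|v_s\|^2ds\Big].
\]
There is a technical point here: $\gamma$ may fail to be equivalent to $\mu$, as the Theorem requires. To handle it, I will either extend the Theorem to absolutely continuous $\gamma$ by a convex-combination argument with $\mu$, or truncate $g$ to bounded $g_N$ and pass to the limit by monotone convergence, using the integrability hypothesis.

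\textbf{Lower bound ($\ge$).} Take the optimal measure $\tilde\gamma=Z\mu$ and the martingale $Z_t=\mathbb{E}[Z|\mathcal{F}_t]$. By the martingale representation theorem there is an adapted process $u$ with $dZ_t=Z_t\,u_t\cdot dW_t$. Under $\tilde\gamma$, Girsanov shows that $B:=W-\int_0^\cdot u_s\,ds$ is a Brownian motion, and $H(\tilde\gamma|\mu)=\tfrac12\mathbb{E}_{\tilde\gamma}\int\|u_s\|^2ds$. Hence
\[
-\log\int e^{-g}\,d\mu=\mathbb{E}_{\tilde\gamma}\Big[g(B+\textstyle\int_0^\cdot u_s\,ds)+\tfrac12\int\|u_s\|^2ds\Big].
\]
This is the desired functional, but written on the space $(\tilde\gamma, B)$ and with $u$ adapted to the filtration of $W$ rather than that of $B$. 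The two filtrations differ in general, and this is the main obstacle. My first attempt will be to prove the estimate for $g$ bounded and Lipschitz, so that $Z$ is bounded below and $u$ is nice. In that case I approximate $u$ by simple processes $u^n$ that are functionals of finitely many increments, and solve $X=W+\int_0^\cdot u^n(X)\,ds$ strongly on $\mu$ by recursion over the time grid. The drifts $v^n:=u^n(X)$ are then admissible on the original space, and $X$ has law close to that of $W$ under $\tilde\gamma$. Continuity of $g$ together with $L^2$ convergence of the costs then gives the lower bound up to $\varepsilon$, as in Boué–Dupuis/Üstünel. Finally, I will extend to general $g$ satisfying the integrability hypothesis by truncation $g\wedge N\vee(-N)$ and monotone convergence on both sides.
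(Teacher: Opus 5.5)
Your route is the paper's: restrict the Gibbs principle to Girsanov densities obtained from martingale representation, and let the stochastic integral vanish in expectation. You go further than the paper's sketch, which stops at $\inf_v\int(g(W)+\frac12\int_0^\infty\|v_s\|^2ds)\,d\gamma_v$ and never transfers back to $\mu$. Your upper bound is fine.

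\textbf{Approximation step.} As written it is not justified. The cost you actually pay is $\mathbb{E}_{\mathbb{P}^n}\big[\tfrac12\int_0^\infty\|u^n_s\|^2ds\big]$ with $\mathbb{P}^n=\Law_\mu(X)$, but $u^n\to u$ holds only in $L^2(\tilde\gamma\otimes dt)$. Weak closeness of laws plus continuity of $g$ does not transfer one to the other. What does work is the following.
\begin{itemize}
\item By Girsanov, $\mathbb{P}^n=\mathcal{E}\big(\int u^n\,dW\big)_\infty\,\mu$. Hence under $\tilde\gamma$ one has $\frac{d\mathbb{P}^n}{d\tilde\gamma}=\exp\big(\int(u^n-u)\,dB-\tfrac12\int\|u^n-u\|^2ds\big)\to1$ in probability, so in $L^1(\tilde\gamma)$ by Scheff\'e. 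That is, $\mathbb{P}^n\to\tilde\gamma$ in total variation.
\item Add an almost sure bound $\int_0^\infty\|u^n_s\|^2ds\le K$. To get it, first stop $u$ at $\tau_K:=\inf\{t:\int_0^t\|u_s\|^2ds\ge K\}$. The stopped measure $Z_{\tau_K}\mu$ agrees with $\tilde\gamma$ on $\mathcal{F}_{\tau_K}$, so the cost does not increase and the law moves by at most $2\tilde\gamma(\tau_K<\infty)$ in total variation.
\end{itemize}
Together these give convergence of $\mathbb{E}_{\mathbb{P}^n}[g]$ for bounded measurable $g$ and convergence of the costs. Continuity of $g$ plays no role.

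\textbf{Extension to general $g$: the genuine gap.} The truncation $(g\wedge N)\vee(-N)$ is not monotone in $N$, and the two one-sided truncations behave differently. Write $\mathcal{J}_g(v):=\mathbb{E}_\mu\big[g(W+\int_0^\cdot v_s\,ds)+\tfrac12\int_0^\infty\|v_s\|^2ds\big]$.
\begin{itemize}
\item Truncation from below is harmless: $\inf_v\mathcal{J}_g\le\inf_v\mathcal{J}_{g\vee(-N)}=-\log\int e^{-g\vee(-N)}\,d\mu\downarrow-\log\int e^{-g}\,d\mu$.
\item Truncation from above only gives $\inf_v\mathcal{J}_{g\wedge N}\le\inf_v\mathcal{J}_g$, which is the easy direction again. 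For the lower bound you need $\inf_v\mathcal{J}_g\le\liminf_N\inf_v\mathcal{J}_{g\wedge N}$. That requires control of $\mathbb{E}_\mu\big[(g-N)^+(W^{v_N})\big]$ along near-optimal drifts $v_N$.
\end{itemize}
The hypothesis gives nothing of this kind. Every measurable $g\ge0$ satisfies $\int(|g|+1)e^{-g}\,d\mu\le1$, so $g^+$ need not even be $\mu$-integrable. An entropy bound on $\Law_\mu(W^{v_N})$ does not control its tails either. This is exactly the case needed later, since $V^f_T$ is bounded below and unbounded above.

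One way to close the gap is not to truncate $g$ at all. Instead, run the approximation above directly for $\tilde\gamma\propto e^{-g}\mu$, where $\mathbb{E}_{\tilde\gamma}|g|<\infty$ by hypothesis. You then need uniform integrability of $g$ under the approximating laws:
\begin{itemize}
\item under $\mathbb{P}^n$ for fixed $K$, from $\mathbb{E}_\mu[(d\mathbb{P}^n/d\mu)^r]\le e^{(r^2-r)K/2}$;
\item as $K\to\infty$, from $Z_{\tau_K}=\mathbb{E}_\mu[Z\,|\,\mathcal{F}_{\tau_K}]$.
\end{itemize}
Both follow by H\"older only under an extra assumption such as $g(W)\in L^p(\mu)$ and $e^{-g(W)}\in L^q(\mu)$ with $\frac1p+\frac1q=1$. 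The polynomial functionals of the application satisfy this, but it is more than the stated hypothesis.
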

For the readers' convenience, we sketch the main ideas of the proof here, and refer to  \cite{10.1214/aop/1022855876}, \cite{USTUNEL20143058} and \cite{10.1007/BFb0005070} for details.
Starting point is the observation 
that the infimum over $P_\mu$ can be replaced by the infimum over only measures of the form $\frac{d\gamma_v}{d\mu}=\exp\left(\int_0^\infty v_s\ dW_s-\frac{1}{2}\int_0^\infty \Vert v_s\Vert^2\ ds\right)$ for a drift $v$ such that $v(x)\in L^2([0,\infty),H)$ for almost every $x\in X$. Then
\begin{align*}
    &-\log\int_X \exp(-g(W(x)))\ \mu(dx)\\
    &=\inf_{v}\left\{\int_Xg(W(x))\ \gamma_v(dx)+ \int_X\log\left(\frac{d\gamma_v}{d\mu}(x)\right)\ \gamma_v(dx)\right\}\\
    &=\inf_{v}\left\{\int_Xg(W(x))\ \gamma_v(dx)+ \int_X\left(\int_0^\infty v_s(x)\ dW_s(x)-\frac{1}{2}\int_0^\infty \Vert v_s(x)\Vert^2\ ds\right)\ \gamma_v(dx)\right\}\\
    &=\inf_{v}\left\{\int_X\left(g(W(x))\ + \frac{1}{2}\int_0^\infty \Vert v_s(x)\Vert^2\ ds\right) \gamma_v(dx)\right\}.
\end{align*}
In the last line we used
\begin{align*}
    &\int_0^\infty v_s\ dW_s-\frac{1}{2}\int_0^\infty \Vert v_s\Vert^2\ ds\\
    &=\int_0^\infty v_s\ d(W_s-\int_0^sv_l\ dl+\int_0^sv_l\ dl)-\frac{1}{2}\int_0^\infty \Vert v_s\Vert^2\ ds\\
    &=\int_0^\infty v_s\ d(W_s-\int_0^sv_l\ dl)+\frac{1}{2}\int_0^\infty \Vert v_s\Vert^2\ ds
\end{align*}
and the fact that the first integral is by Girsanov's theorem a martingale with expectation zero.

\begin{rmk}
    Positive polynomials satisfy the conditions for $g$ in Proposition \ref{prop:Boue-dupuis}.
\end{rmk}
%\BZ{Rem: Polynomials valid}
Finally, we apply Proposition \ref{prop:Boue-dupuis} to \eqref{freeenergy} with $g=f(\int_0^TJ_sd(\cdot)_s)+V_T(\int_0^TJ_sd(\cdot)_s)$, $H=\dot{L}^2(\mathbb{T}^d)$ and $X=\Omega$. Note that this is possible since $g$ has only polynomial growth. This yields
\begin{align}\label{eq:freeenergy}
    W_T(f)=- \log \mathbb{E}[\exp(-V_T^f(Y_T))]=\inf_{v\in\mathbb{H}}\mathbb{E}\left[ V^f_T(Y_T+Z_T(v))+\frac{1}{2}\int_0^\infty\Vert v_s\Vert^2_{L^2}\ ds\right],
\end{align}
where 
\begin{align*}
    Z_T(u)&\coloneq \int_0^T\frac{\sigma_t(D)}{(|D|^2+|D|^4)^\frac{1}{2}}u_t\ dt
\end{align*}
and $\mathbb{H}$ is the space of $\dot{L}^2(\mathbb{T}^d)$-valued  predictable processes, whose trajectories lie almost surely in $\mathcal{H}\coloneq L^2([0,\infty);\dot{L}^2( \mathbb{T}^d))$. 
In the following we write 
\begin{align}\label{eq:FT}
    F_T(v)\coloneq\mathbb{E}\left[ V^f_T(Y_T+Z_T(v))+\frac{1}{2}\int_0^\infty\Vert v_s\Vert^2_{L^2}\ ds\right].
\end{align}

\section{Renormalization}\label{sec:renormalization}

In this section we introduce the renormalization constants for \eqref{eq:firstHamiltonian}. Renormalization is necessary for the convergence of the functionals $(F_T)_T$. Roughly speaking we need to subtract infinities in order to have a convergence of the singular quantity $|\nabla\varphi_T|^4$.\\
We will decompose $F_T$ as given in \eqref{eq:FT} in the form
\begin{align}\label{eq:FT_Phi}
    F_T(u)=\mathbb{E}\left[ f(Y_T+Z_T(u))+\Phi_T(u)+\Vert\nabla Z_T(u)\Vert_{L^4}^4+\frac{1}{2}\int_0^\infty\Vert u_s\Vert^2_{L^2}\ ds\right]. 
\end{align}
To prove $\Gamma$-convergence and equicoercivity of $(F_T)_T$ we need upper and lower bounds for $F_T$. It will turn out that the last two terms on the right handside of \eqref{eq:FT_Phi} behave well under convergence in the chosen weak topology. The terms collected in $\Phi_T$ require more care, and we will derive explicit estimates below.\\
We use Wick-type renormalization (see e.g. \cite[Chapter III]{Janson_1997}) adapted to the vectorial setting, where we subtract the drift parts of powers of $\nabla Y_T$ and keep only the martingale parts to obtain convergence to well-defined distributions.
\begin{lem}\label{Wick-Powers}
The following processes are martingales with respect to the natural filtration of $X$ and the Wiener measure $\mathbb{P}$: 
   \begin{align*}
    [\![\nabla Y_T^4]\!]\coloneq&(\nabla Y_T)^4- 6\langle\partial_1Y,\partial_1Y\rangle_T(\partial_1Y_T)^2-6\langle\partial_2Y,\partial_2Y\rangle_T(\partial_2Y_T)^2-2\langle\partial_2Y,\partial_2Y\rangle_T(\partial_1Y_T)^2\\
    &-2\langle\partial_1Y,\partial_1Y\rangle_T(\partial_2Y_T)^2-4\langle\partial_1Y,\partial_2Y\rangle_T^2+3\langle\partial_1Y,\partial_1Y\rangle_T^2+3\langle\partial_2Y,\partial_2Y\rangle_T^2\\
    &+2\langle\partial_1Y,\partial_1Y\rangle_T\langle\partial_2Y,\partial_2Y\rangle_T\\
    &-8 \int_0^T\int_0^s \partial_1 Y_u\ d(\partial_2 Y_u) \ d\langle\partial_1 Y,\partial_2Y\rangle_s -8 \int_0^T\int_0^s \partial_2 Y_u\ d(\partial_1 Y_u) \ d\langle\partial_1 Y,\partial_2Y\rangle_s\\
    &-8 \int_0^T\int_0^s \partial_1 Y_u\ d\langle\partial_2 Y,\partial_2 Y\rangle_u\ d(\partial_1Y_s)-8 \int_0^T\int_0^s \langle\partial_2 Y,\partial_2 Y\rangle_u\ d(\partial_1 Y_u)\ d(\partial_1Y_s)\\
    &-8 \int_0^T\int_0^s \partial_2 Y_u\ d\langle\partial_1 Y,\partial_1 Y\rangle_u\ d(\partial_2Y_s)-8 \int_0^T\int_0^s \langle\partial_1 Y,\partial_1 Y\rangle_u\ d(\partial_2 Y_u)\ d(\partial_2Y_s),
    \\
    [\![\nabla Y_T^3]\!]\coloneq&(\nabla Y_T)^3-\begin{pmatrix}
        3\langle\partial_1Y,\partial_1Y\rangle_T\partial_1Y_T+2\langle\partial_1Y,\partial_2Y\rangle_T\partial_2Y_T+\langle\partial_2Y,\partial_2Y\rangle_T\partial_1Y_T\\ 
        3\langle\partial_2Y,\partial_2Y\rangle_T\partial_2Y_T+2\langle\partial_1Y,\partial_2Y\rangle_T\partial_1Y_T+\langle\partial_1Y,\partial_1Y\rangle_T\partial_2Y_T
    \end{pmatrix},
    \end{align*} and for all $i,j\in\{1,2\}$
    \begin{align*}
    [\![\partial_i Y_T\partial_j Y_T]\!]\coloneq&\partial_i Y_T\partial_j Y_T-\langle\partial_iY,\partial_jY\rangle_T.
\end{align*}
\end{lem}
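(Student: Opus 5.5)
The plan is to apply It\^o's formula pointwise in $x\in\T^2$ to the continuous martingales $M^1_t=\partial_1Y_t(x)$ and $M^2_t=\partial_2Y_t(x)$. Then check that every finite-variation ($dt$-type) term produced is cancelled by the correction terms in the definitions. Pointwise in $x$ these are honest real-valued martingales for finite $T$, because $\zeta_T$ has compact support, so $Y_T$ is a finite Fourier sum. By \eqref{eq:var}, their brackets $\langle\partial_iY,\partial_jY\rangle_t$ are deterministic, smooth in $t$, and independent of $x$ by translation invariance. I would first check the quadratic case. It\^o's product rule gives
\begin{align*}
d(\partial_iY_t\partial_jY_t)=\partial_iY_t\,d(\partial_jY_t)+\partial_jY_t\,d(\partial_iY_t)+d\langle\partial_iY,\partial_jY\rangle_t,
\end{align*}
so subtracting $\langle\partial_iY,\partial_jY\rangle_t$ leaves a stochastic integral. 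This integral is a true martingale because the integrands are Gaussian with bounded second moments on $[0,T]$.

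For the cubic term, $(\nabla Y)^3=|\nabla Y|^2\nabla Y$. Its first component is $(M^1)^3+M^1(M^2)^2$. It\^o's formula produces drift terms $3M^1\,d\langle M^1\rangle$, $M^1\,d\langle M^2\rangle$, and $2M^2\,d\langle M^1,M^2\rangle$. Next I would apply the product rule to each subtracted term, e.g.
\begin{align*}
d(\langle M^1\rangle_t M^1_t)=M^1_t\,d\langle M^1\rangle_t+\langle M^1\rangle_t\,dM^1_t.
\end{align*}
This shows that the subtracted vector $3\langle M^1\rangle M^1+2\langle M^1,M^2\rangle M^2+\langle M^2\rangle M^1$ has exactly the same drift. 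The difference is therefore a sum of stochastic integrals, and the second component follows by symmetry.

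For the quartic term I would expand
\begin{align*}
|\nabla Y|^4=(M^1)^4+2(M^1)^2(M^2)^2+(M^2)^4.
\end{align*}
It\^o's formula yields the drifts $6(M^1)^2d\langle M^1\rangle$, $6(M^2)^2d\langle M^2\rangle$, $2(M^2)^2d\langle M^1\rangle$, $2(M^1)^2d\langle M^2\rangle$, and $8M^1M^2\,d\langle M^1,M^2\rangle$. The terms $6\langle M^i\rangle (M^i)^2$, $2\langle M^j\rangle (M^i)^2$, $3\langle M^i\rangle^2$ and $2\langle M^1\rangle\langle M^2\rangle$ are handled by differentiating products of a deterministic function with a semimartingale. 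Their new drifts of the form $\langle\cdot\rangle\, d\langle\cdot\rangle$ are cancelled by the constant terms. The remaining mixed drift, $8M^1M^2\,d\langle M^1,M^2\rangle$, together with the cross terms $\langle M^j\rangle\,d\langle M^i\rangle$ arising from $2\langle M^2\rangle(M^1)^2$ etc., is what the iterated-integral corrections and the $-4\langle M^1,M^2\rangle^2$ term must absorb. To see this I would write
\begin{align*}
M^1_sM^2_s=\int_0^s M^1\,dM^2+\int_0^s M^2\,dM^1+\langle M^1,M^2\rangle_s,
\end{align*}
which turns $\int_0^T 8M^1M^2\,d\langle M^1,M^2\rangle$ into the two listed iterated integrals plus $4\langle M^1,M^2\rangle_T^2$. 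Similarly, I would rewrite the terms $\int \langle M^2\rangle\,d((M^1)^2)$ in iterated form using It\^o for $(M^1)^2$ and integration by parts in the deterministic variable. This accounts for the remaining $-8\int\int$ terms.

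The main obstacle is this bookkeeping in the quartic case. One must verify that the signs and coefficients listed in the statement match exactly, since after the rewriting several of the iterated integrals with $dM$ as outer integrator are themselves martingales. The integrability needed to upgrade local martingales to martingales I expect to be routine, since all moments of Gaussian polynomials on $[0,T]$ are bounded. I would do the quartic bookkeeping by computing $d$ of the whole right-hand side and collecting coefficients of $(M^i)^2d\langle\cdot\rangle$, $M^1M^2d\langle\cdot\rangle$ and $\langle\cdot\rangle d\langle\cdot\rangle$ separately.
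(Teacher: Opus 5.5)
Your proposal is correct and follows essentially the paper's route: Itô's formula (product rule) for $\partial_1Y,\partial_2Y$ with deterministic, $x$-independent brackets, checking that all finite-variation terms cancel, and then upgrading local martingales to martingales via Gaussian moment bounds. One small correction to your quartic bookkeeping: the cross drifts $\langle M^j\rangle\,d\langle M^i\rangle$ are already cancelled by $2\langle M^1\rangle\langle M^2\rangle$, so the only leftover drift is $8M^1M^2\,d\langle M^1,M^2\rangle$, which is absorbed by $-4\langle M^1,M^2\rangle_T^2$ together with the two $d\langle M^1,M^2\rangle$-iterated integrals; the four $-8\int\int\cdots d(\partial_iY_s)$ terms are themselves martingales and absorb no drift.
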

\begin{proof}
 Note, that the martingale property is preserved if we integrate a martingale with respect to a martingale. Itô's formula yields for $i,j\in\{1,2\}$
\begin{align*}
    \partial_i Y_T\partial_j Y_T= \int_0^T\partial_i Y_s\ d(\partial_j Y_s)+\int_0^T\partial_j Y_s\ d(\partial_i Y_s)+\langle\partial_iY,\partial_jY\rangle_T.
\end{align*}
For the cubic term we have for $j,i\in\{1,2\}$ and $i\ne j$
\begin{align*}
    ((\nabla Y_T)^3)_i&=(\nabla Y_T)^2\partial_i Y_T\\
    &=6\int_0^T\int_0^s\partial_i Y_u\ d(\partial_i Y_u)\ d(\partial_i Y_s)+2\int_0^T\int_0^s\partial_j Y_u\ d(\partial_j Y_u)\ d(\partial_i Y_s)\\
    &+2\int_0^T\int_0^s\partial_j Y_u\ d(\partial_i Y_u)\ d(\partial_j Y_s)+2\int_0^T\int_0^s\partial_i Y_u\ d(\partial_j Y_u)\ d(\partial_j Y_s)\\
    &+3\int_0^T \partial_i Y_s\ d\langle\partial_i Y,\partial_i Y\rangle_s+3\int_0^T\int_0^s d\langle\partial_i Y,\partial_i Y\rangle_u\ d(\partial_i Y_s)\\
    &+2\int_0^T \partial_j Y_s\ d\langle\partial_i Y,\partial_j Y\rangle_s+2\int_0^T\int_0^s d\langle\partial_i Y,\partial_j Y\rangle_u\ d(\partial_j Y_s)\\
    &+\int_0^T \partial_i Y_s\ d\langle\partial_j Y,\partial_j Y\rangle_s+\int_0^T\int_0^s d\langle\partial_j Y,\partial_j Y\rangle_u\ d(\partial_i Y_s).
\end{align*}
Since the covariation processes are of bounded variation we obtain with Itô's product rule 
\begin{align*}
3\int_0^T \partial_i Y_s\ d\langle\partial_i Y,\partial_i Y\rangle_s+3\int_0^T\int_0^s d\langle\partial_i Y,\partial_i Y\rangle_u\ d(\partial_i Y_s)\\
    +2\int_0^T \partial_j Y_s\ d\langle\partial_i Y,\partial_j Y\rangle_s+2\int_0^T\int_0^s d\langle\partial_i Y,\partial_j Y\rangle_u\ d(\partial_j Y_s)\\
    +\int_0^T \partial_i Y_s\ d\langle\partial_j Y,\partial_j Y\rangle_s+\int_0^T\int_0^s d\langle\partial_j Y,\partial_j Y\rangle_u\ d(\partial_i Y_s)\\
    =3\langle\partial_iY,\partial_iY\rangle_T\partial_iY_T+2\langle\partial_iY,\partial_jY\rangle_T\partial_jY_T+\langle\partial_jY,\partial_jY\rangle_T\partial_iY_T.
\end{align*}
Lastly, we observe the fourth power term,
\begin{align*}
    (\nabla Y_T)^4= (\partial_1Y_T)^4+2(\partial_1Y_T)^2(\partial_2Y_T)^2+(\partial_2Y_T)^4\\
    =24\int_0^T\int_0^s\int_0^u\partial_1Y_l\ d(\partial_1Y_l)\ d(\partial_1Y_u)\ d(\partial_1Y_s)+8\int_0^T\int_0^s\int_0^u\partial_2Y_l\ d(\partial_1Y_l)\ d(\partial_2Y_u)\ d(\partial_1Y_s)\\
    +8\int_0^T\int_0^s\int_0^u\partial_2Y_l\ d(\partial_2Y_l)\ d(\partial_1Y_u)\ d(\partial_1Y_s)+8\int_0^T\int_0^s\int_0^u\partial_1Y_l\ d(\partial_2Y_l)\ d(\partial_2Y_u)\ d(\partial_1Y_s)\\
    +24\int_0^T\int_0^s\int_0^u\partial_2Y_l\ d(\partial_2Y_l)\ d(\partial_2Y_u)\ d(\partial_2Y_s)+8\int_0^T\int_0^s\int_0^u\partial_1Y_l\ d(\partial_2Y_l)\ d(\partial_1Y_u)\ d(\partial_2Y_s)\\
    +8\int_0^T\int_0^s\int_0^u\partial_1Y_l\ d(\partial_1Y_l)\ d(\partial_2Y_u)\ d(\partial_2Y_s)+8\int_0^T\int_0^s\int_0^u\partial_2Y_l\ d(\partial_1Y_l)\ d(\partial_1Y_u)\ d(\partial_2Y_s)\\
    +12 \int_0^T\int_0^s \partial_1 Y_u\ d\langle\partial_1 Y,\partial_1Y\rangle_u\ d(\partial_1Y_s)+12 \int_0^T\int_0^s \partial_1 Y_u\ d\langle\partial_2 Y,\partial_2 Y\rangle_u\ d(\partial_1Y_s)\\
    +12 \int_0^T\int_0^s \langle\partial_1 Y,\partial_1Y\rangle_u\ d(\partial_1 Y_u)\ d(\partial_1Y_s)+12 \int_0^T\int_0^s \langle\partial_2 Y,\partial_2 Y\rangle_u\ d(\partial_1 Y_u)\ d(\partial_1Y_s)\\
   +12 \int_0^T\int_0^s \partial_2 Y_u\ d\langle\partial_2 Y,\partial_2Y\rangle_u\ d(\partial_2Y_s)+12 \int_0^T\int_0^s \partial_2 Y_u\ d\langle\partial_1 Y,\partial_1 Y\rangle_u\ d(\partial_2Y_s)\\
    +12 \int_0^T\int_0^s \langle\partial_2 Y,\partial_2Y\rangle_u\ d(\partial_2 Y_u)\ d(\partial_2Y_s)+12 \int_0^T\int_0^s \langle\partial_1 Y,\partial_1 Y\rangle_u\ d(\partial_2 Y_u)\ d(\partial_2Y_s)\\
    +12 \int_0^T\int_0^s \partial_1 Y_u\ d(\partial_1 Y_u) \ d\langle\partial_1 Y,\partial_1Y\rangle_s+ 6 \int_0^T\int_0^s d\langle\partial_1 Y,\partial_1 Y\rangle_u \ d\langle\partial_1 Y,\partial_1Y\rangle_s\\
    +4 \int_0^T\int_0^s \partial_2 Y_u\ d(\partial_2 Y_u) \ d\langle\partial_1 Y,\partial_1Y\rangle_s+ 2 \int_0^T\int_0^s d\langle\partial_1 Y,\partial_1Y\rangle_u \ d\langle\partial_2 Y,\partial_2Y\rangle_S\\
    +12 \int_0^T\int_0^s \partial_2 Y_u\ d(\partial_2 Y_u) \ d\langle\partial_2 Y,\partial_2Y\rangle_s+ 6 \int_0^T\int_0^s d\langle\partial_2 Y,\partial_2Y\rangle_u \ d\langle\partial_2 Y,\partial_2Y\rangle_s\\
    +4 \int_0^T\int_0^s \partial_1 Y_u\ d(\partial_1 Y_u) \ d\langle\partial_2 Y,\partial_2Y\rangle_s+ 2 \int_0^T\int_0^s d\langle\partial_2 Y,\partial_2Y\rangle_u \ d\langle\partial_1 Y,\partial_1Y\rangle_s\\
    +8 \int_0^T\int_0^s \partial_1 Y_u\ d(\partial_2 Y_u) \ d\langle\partial_1 Y,\partial_2Y\rangle_s +8 \int_0^T\int_0^s \partial_2 Y_u\ d(\partial_1 Y_u) \ d\langle\partial_1 Y,\partial_2Y\rangle_s\\
    + 8 \int_0^T\int_0^s d\langle\partial_1 Y,\partial_2Y\rangle_u \ d\langle\partial_1 Y,\partial_2Y\rangle_s.
\end{align*}
As above the drift part can be rewritten as 
\begin{align*}
  &6\langle\partial_1Y,\partial_1Y\rangle_T(\partial_1Y_T)^2+6\langle\partial_2Y,\partial_2Y\rangle_T(\partial_2Y_T)^2+2\langle\partial_2Y,\partial_2Y\rangle_T(\partial_1Y_T)^2\\
    &+2\langle\partial_1Y,\partial_1Y\rangle_T(\partial_2Y_T)^2+4\langle\partial_1Y,\partial_2Y\rangle_T^2-3\langle\partial_1Y,\partial_1Y\rangle_T^2-3\langle\partial_2Y,\partial_2Y\rangle_T^2\\
    &-2\langle\partial_1Y,\partial_1Y\rangle_T\langle\partial_2Y,\partial_2Y\rangle_T\\
    &+8 \int_0^T\int_0^s \partial_1 Y_u\ d(\partial_2 Y_u) \ d\langle\partial_1 Y,\partial_2Y\rangle_s +8 \int_0^T\int_0^s \partial_2 Y_u\ d(\partial_1 Y_u) \ d\langle\partial_1 Y,\partial_2Y\rangle_s\\
    &+8 \int_0^T\int_0^s \partial_1 Y_u\ d\langle\partial_2 Y,\partial_2 Y\rangle_u\ d(\partial_1Y_s)+8 \int_0^T\int_0^s \langle\partial_2 Y,\partial_2 Y\rangle_u\ d(\partial_1 Y_u)\ d(\partial_1Y_s)\\
    &+8 \int_0^T\int_0^s \partial_2 Y_u\ d\langle\partial_1 Y,\partial_1 Y\rangle_u\ d(\partial_2Y_s)+8 \int_0^T\int_0^s \langle\partial_1 Y,\partial_1 Y\rangle_u\ d(\partial_2 Y_u)\ d(\partial_2Y_s).
\end{align*}
These processes are in $L^1(\Omega)$ since powers of Gaussian processes are integrable by the exponential decay of these processes. 
\end{proof}
\begin{rmk}
    The terms $\langle\partial_iY,\partial_jY\rangle_t=\sum_{n\in\Z^2\setminus\{0\}}\frac{n_i(-n_j)}{|n|^2+|n|^4}\zeta_t^2(n)$ are constant in  $x$ and $\omega$.
\end{rmk}
This yields the following renormalization.

\begin{thm}\label{thm:renormalization}
    
Choosing 
\begin{align*}
     a_T=&6\langle\partial_1Y,\partial_1Y\rangle_T(\partial_1Y_T)^2+6\langle\partial_2Y,\partial_2Y\rangle_T(\partial_2Y_T)^2+2\langle\partial_2Y,\partial_2Y\rangle_T(\partial_1Y_T)^2\\
    &+2\langle\partial_1Y,\partial_1Y\rangle_T(\partial_2Y_T)^2+4\langle\partial_1Y,\partial_2Y\rangle_T^2-3\langle\partial_1Y,\partial_1Y\rangle_T^2-3\langle\partial_2Y,\partial_2Y\rangle_T^2\\
    &-2\langle\partial_1Y,\partial_1Y\rangle_T\langle\partial_2Y,\partial_2Y\rangle_T\\
    &+8 \int_0^T\int_0^s \partial_1 Y_u\ d(\partial_2 Y_u) \ d\langle\partial_1 Y,\partial_2Y\rangle_s +8 \int_0^T\int_0^s \partial_2 Y_u\ d(\partial_1 Y_u) \ d\langle\partial_1 Y,\partial_2Y\rangle_s\\
    &+8 \int_0^T\int_0^s \partial_1 Y_u\ d\langle\partial_2 Y,\partial_2 Y\rangle_u\ d(\partial_1Y_s)+8 \int_0^T\int_0^s \langle\partial_2 Y,\partial_2 Y\rangle_u\ d(\partial_1 Y_u)\ d(\partial_1Y_s)\\
    &+8 \int_0^T\int_0^s \partial_2 Y_u\ d\langle\partial_1 Y,\partial_1 Y\rangle_u\ d(\partial_2Y_s)+8 \int_0^T\int_0^s \langle\partial_1 Y,\partial_1 Y\rangle_u\ d(\partial_2 Y_u)\ d(\partial_2Y_s),\\
     b_T=&\begin{pmatrix}
        3\langle\partial_1Y,\partial_1Y\rangle_T\partial_1Y_T+2\langle\partial_1Y,\partial_2Y\rangle_T\partial_2Y_T+\langle\partial_2Y,\partial_2Y\rangle_T\partial_1Y_T\\ 
        3\langle\partial_2Y,\partial_2Y\rangle_T\partial_2Y_T+2\langle\partial_1Y,\partial_2Y\rangle_T\partial_1Y_T+\langle\partial_1Y,\partial_1Y\rangle_T\partial_2Y_T
    \end{pmatrix},\text{\ and}  
\end{align*}
\begin{align*}
     &c_T:\R^2\to\R,\quad c_T((x_1,x_2))=&&6\langle\partial_1Y,\partial_1Y\rangle_Tx_1^2+6\langle\partial_2Y,\partial_2Y\rangle_Tx_2^2+2\langle\partial_1Y,\partial_1Y\rangle_Tx_2^2\\
     & &&+2\langle\partial_2Y,\partial_2Y\rangle_Tx_1^2+8\langle\partial_1Y,\partial_2Y\rangle_Tx_1x_2
     \end{align*}
yields
\begin{align*}
  &  \int|\nabla Y_T+\nabla Z_T(u)|^4-c_T(\nabla Z_T(u))-4b_T\cdot\nabla Z_T(u)-a_T\\
  &  =\int[\![\nabla Y_T^4]\!]
    +4[\![\nabla Y_T^3]\!]\cdot\nabla Z_T(u)+4\nabla Y_T\cdot(\nabla Z_T(u))^3
    +(\nabla Z_T(u))^4\\&
    +\sum_{i,j,k,l\in\{1,2\}}2(\delta_{ik}\delta_{jl}+\delta_{il}\delta_{jk}+\delta_{ij}\delta_{kl})[\![\partial_iY_T\partial_j Y_T]\!]\partial_k Z_T(u) \partial_l Z_T(u).
\end{align*}
If we define
\begin{align}\label{eq:PhiT}
    &\Phi_T(u)\coloneq\int4\nabla Y_T\cdot(\nabla Z_T(u))^3 +4[\![\nabla Y_T^3]\!]\cdot\nabla Z_T(u)\nonumber\\
    &+\sum_{i,j,k,l\in\{1,2\}}2(\delta_{ik}\delta_{jl}+\delta_{il}\delta_{jk}+\delta_{ij}\delta_{kl})[\![\partial_iY_T\partial_j Y_T]\!]\partial_k Z_T(u) \partial_l Z_T(u)
\end{align}
we obtain the representation
\begin{align*}
    F_T(u)=\mathbb{E}\left[ f(Y_T+Z_T(u))+\Phi_T(u)+\Vert\nabla Z_T(u)\Vert_{L^4}^4+\frac{1}{2}\int_0^\infty\Vert u_s\Vert^2_{L^2}\ ds\right].
\end{align*}
\end{thm}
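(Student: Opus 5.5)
We prove the identity pointwise in $x$ and $\omega$ by a purely algebraic expansion, then integrate; the representation of $F_T$ follows by substituting into \eqref{eq:FT}.

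\textbf{Step 1: expand the quartic.} Write $A=\nabla Y_T(x)$ and $B=\nabla Z_T(u)(x)$. Since $|A+B|^4=(|A|^2+2A\cdot B+|B|^2)^2$, we get
\begin{align*}
|A+B|^4=|A|^4+4|A|^2A\cdot B+4(A\cdot B)^2+2|A|^2|B|^2+4|B|^2A\cdot B+|B|^4 .
\end{align*}
In the paper's notation this reads $(A)^4+4(A)^3\cdot B+\big(4(A\cdot B)^2+2|A|^2|B|^2\big)+4A\cdot(B)^3+(B)^4$. The last two terms already match $4\nabla Y_T\cdot(\nabla Z_T(u))^3+(\nabla Z_T(u))^4$.

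The quadratic-in-$B$ part can be written as
\begin{align*}
\sum_{i,j,k,l}2(\delta_{ik}\delta_{jl}+\delta_{il}\delta_{jk}+\delta_{ij}\delta_{kl})A_iA_jB_kB_l ,
\end{align*}
because the first two delta products each give $2(A\cdot B)^2$ and the third gives $2|A|^2|B|^2$.

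\textbf{Step 2: insert the Wick-renormalized objects of Lemma \ref{Wick-Powers}.} By definition:
\begin{itemize}
\item $(A)^4=[\![\nabla Y_T^4]\!]+a_T$, since $a_T$ is exactly the drift part subtracted there;
\item $(A)^3=[\![\nabla Y_T^3]\!]+b_T$;
\item $A_iA_j=[\![\partial_iY_T\partial_jY_T]\!]+\langle\partial_iY,\partial_jY\rangle_T$.
\end{itemize}

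Substituting these, the only new term beyond the claimed right-hand side is
\begin{align*}
\sum_{i,j,k,l}2(\delta_{ik}\delta_{jl}+\delta_{il}\delta_{jk}+\delta_{ij}\delta_{kl})\langle\partial_iY,\partial_jY\rangle_T B_kB_l .
\end{align*}
We must check that this equals $c_T(B)$. Write $C_{ij}=\langle\partial_iY,\partial_jY\rangle_T$, which is symmetric. The sum then equals $4\,B^\top CB+2\,\mathrm{tr}(C)|B|^2$. Expanding,
\begin{align*}
4B^\top CB+2\,\mathrm{tr}(C)|B|^2=(4C_{11}+2C_{11}+2C_{22})B_1^2+(4C_{22}+2C_{11}+2C_{22})B_2^2+8C_{12}B_1B_2 .
\end{align*}
This is $6C_{11}B_1^2+2C_{22}B_1^2+6C_{22}B_2^2+2C_{11}B_2^2+8C_{12}B_1B_2=c_T(B)$.

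Together with the $4b_T\cdot B$ and $a_T$ terms, this gives the claimed identity pointwise, and integrating over $\T^2$ yields it.

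\textbf{Step 3: the representation of $F_T$.} Reading $V_T^f(Y_T+Z_T(u))$ with renormalization $\mathcal{O}_T=c_T(\nabla Z_T)+4b_T\cdot\nabla Z_T+a_T$, we use the identity from Step 2. Taking expectations, $\mathbb{E}\int[\![\nabla Y_T^4]\!]=0$ by the martingale property of Lemma \ref{Wick-Powers}, since the martingale starts at $0$. The remaining terms are $\Phi_T(u)$ and $\Vert\nabla Z_T(u)\Vert_{L^4}^4$.

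\textbf{Main obstacle.} The delicate point is justifying that the renormalization counterterm, which involves $Z_T(u)$, is a legitimate choice of ``$\mathcal{O}_T(|\nabla\varphi_T|^2)$''. Rewriting everything in terms of $\varphi_T=Y_T+Z_T(u)$, the counterterm is $c_T(\nabla\varphi_T)$ plus a constant: the $b_T$ and $a_T$ pieces recombine with $c_T(\nabla Y_T)$ terms. I would verify this by the same coefficient matching, noting that $c_T(A)+4b_T\cdot\ldots$ is consistent with $b_T=CA$-type linearity, so that the counterterm depends only on $\varphi_T$ (up to $a_T$-type terms of zero mean plus constants). The rest is bookkeeping of coefficients.
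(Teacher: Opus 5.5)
Your proof is correct and follows the paper's own argument: you expand $|\nabla Y_T+\nabla Z_T(u)|^4$, rewrite the part quadratic in $\nabla Z_T(u)$ as the $\delta$-sum, and substitute the definitions of $[\![\nabla Y_T^4]\!]$, $[\![\nabla Y_T^3]\!]$ and $[\![\partial_iY_T\partial_jY_T]\!]$ from Lemma~\ref{Wick-Powers}, so that the leftover bracket terms are exactly $a_T$, $4b_T\cdot\nabla Z_T(u)$ and $c_T(\nabla Z_T(u))$. You are more explicit than the paper in checking the coefficients of $c_T$ and in using $\mathbb{E}\int[\![\nabla Y_T^4]\!]=0$ to drop that term from $F_T$, and your coefficient $4$ on $\nabla Y_T\cdot(\nabla Z_T(u))^3$ is right where the paper's displayed expansion has a misprint; your closing remark that the counterterm should depend on $\varphi_T$ alone is a point the paper does not address and is not needed for the statement as formulated, though note that $a_T-c_T(\nabla Y_T)$ is not literally a constant, since it contains random iterated integrals.
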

\begin{proof}
    Having
    \begin{align*}
        &|\nabla Y_T+\nabla Z_T(u)|^4\\&=(\nabla Y)^4+4(\nabla Y)^3\cdot\nabla Z_T(u)+2(\nabla Y_T)^2(\nabla Z_T(u))^2+4(\nabla Y_T\cdot\nabla Z_T(u))^2\\&+\nabla Y_T\cdot (\nabla Z_T(u))^3+(\nabla Z_T(u))^4\\
        &=(\nabla Y)^4+4(\nabla Y)^3\cdot\nabla Z_T(u) +\nabla Y_T\cdot (\nabla Z_T(u))^3+(\nabla Z_T(u))^4\\&
        +\sum_{i,j,k,l\in\{1,2\}}2(\delta_{ik}\delta_{jl}+\delta_{il}\delta_{jk}+\delta_{ij}\delta_{kl})\partial_iY_T\partial_j Y_T\partial_k Z_T(u) \partial_l Z_T(u)
    \end{align*}
    yields the intended form of $F_T$.
\end{proof}
Finally, we obtain the following bounds.
\begin{lem}\label{lem:lubounds}
    There exist $C>0$ and $0<\delta<1$ such that
    \begin{align}\label{lubounds}
        -C+(1-\delta)\mathbb{E}\left[\Vert \nabla Z_T(u)\Vert_{L^4}^4+\frac{1}{2}\Vert u\Vert_\mathcal{H}^2\right]\leq F_T(u)\leq C+(1+\delta)\mathbb{E}\left[\Vert \nabla Z_T(u)\Vert_{L^4}^4+\frac{1}{2}\Vert u\Vert_\mathcal{H}^2\right].
    \end{align}
\end{lem}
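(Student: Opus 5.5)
The plan follows the strategy of Barashkov--Gubinelli for $\Phi^4_2$. We start from the representation of Theorem \ref{thm:renormalization},
\[
F_T(u)=\mathbb{E}\Big[f(Y_T+Z_T(u))+\Phi_T(u)+\Vert\nabla Z_T(u)\Vert_{L^4}^4+\tfrac12\Vert u\Vert_\mathcal{H}^2\Big],
\]
and show that every term except the last two is bounded in absolute value by $C+\delta\,\mathbb{E}[\Vert\nabla Z_T(u)\Vert_{L^4}^4+\frac12\Vert u\Vert_\mathcal{H}^2]$. Here $C$ is independent of $T$ and $u$. Both inequalities in \eqref{lubounds} then follow at once by adding and subtracting.

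\textbf{Step 1: the term $f$.} Since $f$ has linear growth, $|f(Y_T+Z_T(u))|\lesssim 1+\Vert Y_T\Vert_{\mathscr{C}^{1-\kappa}}+\Vert Z_T(u)\Vert_{\mathscr{C}^{1-\kappa}}$. The Gaussian moment is bounded uniformly in $T$ by the computation in Section 1.2. For $Z_T(u)$ we use that $J_t$ gains two derivatives and is supported on $|n|\sim t$. By Cauchy--Schwarz in time this gives $\Vert Z_T(u)\Vert_{H^{2-\epsilon}}\lesssim\Vert u\Vert_\mathcal{H}$, and Sobolev embedding in $d=2$ bounds $\Vert Z_T(u)\Vert_{\mathscr{C}^{1-\kappa}}$. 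Young's inequality then yields $C_\delta+\delta\Vert u\Vert^2_\mathcal{H}$.

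\textbf{Step 2: the terms in $\Phi_T$.} We bound each term by duality between Besov spaces, pairing a stochastic object of negative regularity with a power of $\nabla Z_T(u)$ of positive regularity. The stochastic objects are $\nabla Y_T\in\mathscr{C}^{-\kappa}$, $[\![\partial_iY_T\partial_jY_T]\!]\in\mathscr{C}^{-\kappa}$ and $[\![\nabla Y_T^3]\!]\in\mathscr{C}^{-\kappa}$. Their $L^p(\Omega)$ moments of $B^{-\kappa}_{p,p}$-norms are bounded uniformly in $T$ by hypercontractivity (Nelson's estimate) and the Wick/martingale structure of Lemma \ref{Wick-Powers}. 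The covariance computation \eqref{eq:var} gives the needed second moments of the Littlewood--Paley blocks, which exhibit logarithmic divergence only before renormalization.

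For the deterministic factors we interpolate. $\nabla Z_T(u)$ is bounded in $W^{1-\epsilon,2}$ by $\Vert u\Vert_\mathcal{H}$ and in $L^4$ by itself, so it lies in $B^{\kappa'}_{p,q}$ with norm controlled by $\Vert\nabla Z\Vert_{L^4}^{1-\theta}\Vert u\Vert^\theta_\mathcal{H}$ for small $\theta$. Powers of $\nabla Z$ are handled similarly using the fractional Leibniz rule. For example, the cubic term satisfies
\[
\Big|\int\nabla Y_T\cdot(\nabla Z)^3\Big|\lesssim\Vert\nabla Y_T\Vert_{B^{-\kappa}_{p,p}}\Vert(\nabla Z)^3\Vert_{B^{\kappa}_{p',1}}\lesssim \Vert\nabla Y_T\Vert\,\Vert\nabla Z\Vert_{L^4}^{3-\theta'}\Vert u\Vert_{\mathcal{H}}^{\theta'}.
\]
The exponents sum to strictly less than the scaling of $\Vert\nabla Z\Vert_{L^4}^4+\Vert u\Vert^2_\mathcal{H}$. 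Young's inequality therefore gives $\delta(\Vert\nabla Z\Vert_{L^4}^4+\frac12\Vert u\Vert^2)+C_\delta\Vert\nabla Y_T\Vert^{N}$. Taking expectations and using the uniform moment bounds gives the desired estimate. The quadratic and linear terms in $\Phi_T$ are treated the same way, and are easier.

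\textbf{Main obstacle.} The difficult part is the bilinear-in-$Z$ term with $[\![\partial_iY_T\partial_jY_T]\!]$ and the cubic term $\nabla Y_T\cdot(\nabla Z)^3$. Here only a small amount of positive regularity of $\nabla Z$ is available, and we must beat the $\kappa$ loss while keeping the total homogeneity below quartic/quadratic scaling. I would first try to control $\Vert\nabla Z_T(u)\Vert_{B^{\kappa}_{2,2}}$ by $\Vert u\Vert_\mathcal{H}$ and interpolate it with $L^4$ as above. Should that be insufficient, the fallback is to change the drift as in Barashkov--Gubinelli, absorbing the worst cubic term into a shifted $u$. In $d=2$ the regularity gap $1-\epsilon>\kappa$ should make the direct interpolation work, so the change of drift should not be needed.
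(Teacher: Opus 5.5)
Your proposal is correct and takes essentially the same route as the paper. For the $f$-term, both use linear growth, the bound $\Vert \nabla Z_T(u)\Vert_{H^1}\lesssim\Vert u\Vert_{\mathcal H}$ and Young's inequality; for $\Phi_T$, both pair each Wick object against powers of $\nabla Z_T(u)$ by duality, apply the fractional Leibniz rule and interpolation between $H^1$ and $L^4$, then Young's inequality with uniform-in-$T$ moment bounds on the Wick powers (Lemmas \ref{lem:generalbounds} and \ref{WickRegularity}). As you expected, no change of drift is needed in $d=2$, and your slightly weaker $W^{1-\epsilon,2}$ bound on $\nabla Z_T(u)$ (the paper has the exact $H^1$ bound) suffices for the interpolation.
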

\begin{proof} The proof relies heavily on the estimate $\Vert\nabla Z_T(u)\Vert_{H^1}\lesssim\Vert u\Vert_\mathcal{H}$. This is in the spirit of \cite[Lemma 2]{Barashkov_2020}, and we provide an explicit proof in the appendix (see Section \ref{sec:ZTu}). Then we have by linear growth of $f$
\begin{eqnarray*}
    |f(Y_T+Z_T(u))|&\leq& C\left(\|Y_T\|_{C^{1-\kappa}}+\|Z_T(u)\|_{C^{1-\kappa}}\right)
\end{eqnarray*}
The first  term on the right hand-side is uniformly bounded in $T$, and for the second term, we obtain for all $\delta\in (0,1)$, by Sobolev embedding and Poincar\'{e}'s inequality (recall that $u$ and hence also $Z_T(u)$ have zero mean)
\[\|Z_T(u)\|_{C^{1-\kappa}}\leq C\|Z_T(u)\|_{H^2}\leq C\|\nabla Z_T(u)\|_{H^1}\leq C\|u\|_{\mathcal{H}}\leq \frac{C^2}{\delta}+\delta\|u\|_{\mathcal{H}}^2.\]
It remains to bound $\Phi_T$ (see \eqref{eq:PhiT}).
By Lemma \ref{lem:generalbounds} we can find $0<\delta<1$ such that for the three terms, for all $T$ we have
\begin{align*}
    |\int [\![\nabla Y_T^3]\!]\cdot\nabla Z_T(u)|&\leq C(\delta)\Vert [\![\nabla Y_T^3]\!]\Vert_{H^{-1}}^2+\delta \Vert u\Vert_\mathcal{H}^2 \\
    |\int [\![\partial_iY_T\partial_j Y_T]\!]\partial_k Z_T(u) \partial_l Z_T(u)|&\leq C(\delta)\Vert[\![\partial_iY_T\partial_j Y_T]\!]\Vert_{W^{-\varepsilon,\frac{5}{2}}}^\frac{5}{2}+\delta (\Vert u\Vert^2_\mathcal{H}+\Vert \nabla Z_T\Vert_{L^4}^4)\\
   |\int \nabla Y_T\cdot(\nabla Z_T(u))^3|&\leq C(\delta)\Vert\nabla Y_T\Vert_{W^{-\varepsilon,5}}^{5}+\delta (\Vert u\Vert^2_\mathcal{H}+\Vert \nabla Z_T\Vert_{L^4}^4) .
\end{align*}
By Lemma \ref{WickRegularity} we can bound the norms of the Wick-powers independently of $T$ in expectation. This concludes the proof.
\end{proof}

\section{Gamma-Convergence}\label{sec:gamma}
To obtain the convergence of the free energies (see \eqref{eq:freeenergy})
\begin{align*}
    W_T(f)&=\inf_{u\in\mathbb{H}}F_T(u)=\inf_{u\in\mathbb{H}}\mathbb{E}\left[ f(Y_T+Z_T(u))+\Phi_T(u)+\Vert\nabla Z_T(u)\Vert_{L^4}^4+\frac{1}{2}\int_0^\infty\Vert u_s\Vert^2_{L^2}\ ds\right]
\end{align*}
it would be sufficient to prove $\Gamma$-convergence and equicoercivity of the sequence $(F_T)_T$ (see Theorem \ref{minimizer}). A major technical difficulty here is that the functionals control $\|u\|_{\mathcal{H}}$ only in expectation (see Lemma \ref{lem:lubounds}). In order to still get equicoercivity we need to reformulate the problem and choose a different topological space.
This has been elaborated in \cite{Barashkov_2020} for the $\Phi_4^3$-model. We follow closely their lines of argument and comment only on the differences.\\\\
First, we introduce the space
\begin{align*}
    \mathscr{X}\coloneq C([0,\infty]; \mathscr{C}^{1-\kappa}(\mathbb{T}^2;\R)\times\mathscr{C}^{-\kappa}(\mathbb{T}^2;\R^{2\times 2})\times \mathscr{C}^{-\kappa}(\mathbb{T}^2;\R^2))
\end{align*}
and note that $\mathbb{Y}\coloneq( Y,( [\![\partial_iY_T\partial_jY_T]\!])_{i,j\in \{1,2\}},[\![\nabla Y^3]\!])\in\mathscr{X}$ by Lemma \ref{WickRegularity}.\\\\
We consider the following topological space. Recall that $\mathbb{H}$ is defined below \eqref{eq:freeenergy}.
\begin{defi}\label{def:wekconv}
    Denote the canonical variable on $\mathscr{X}\times\mathcal{H}$ by $(\mathbb{X},u)$ and consider the space of probability measures
    \begin{align*}
        \mathcal{Y}\coloneq \{\mu\in \mathcal{P}(\mathscr{X}\times\mathcal{H})\ |\ \mathbb{E}_\mu[\Vert u\Vert_\mathcal{H}^2]<\infty\}.
    \end{align*}
    We equip this space with the topology induced by the convergence $\mu_n\to\mu$ which is defined as follows:
    \begin{itemize}
        \item $(\mu_n)_{n}$ converges to $\mu$ weakly on $\mathscr{X}\times \mathcal{H}_w$, where $\mathcal{H}_w$ is $\mathcal{H}$ equipped with the weak topology,
        \item $\sup_n\mathbb{E}_{\mu_n}[\Vert u\Vert_\mathcal{H}^2]<\infty$.
    \end{itemize}
    Let 
    \begin{align}\label{eq:X}
        \mathcal{X}\coloneq \{\mu\in\mathcal{Y}\ |\ \mu=\Law_\mathbb{P}(\mathbb{Y},u)\text{ for some }u\in\mathbb{H}\}
    \end{align}
    and denote by $\bar{\mathcal{X}}\subset\mathcal{Y}$ the closure of $\mathcal{X}$ in $\mathcal{Y}$.
\end{defi}
\begin{rmk}\label{law}
    \begin{itemize}
        \item For every $\mu\in\bar{\mathcal{X}}$ we have $\Law_\mu(\mathbb{X})=\Law_\mathbb{P}(\mathbb{Y})$ because  for $f\in C_b(\mathscr{X)}$ and a sequence $(\mu_n)_n\subset\mathcal{X}$ that converges weakly to $\mu$, we have 
    \begin{align*}
        \int_{\Omega} f(\mathbb{Y})\,\mathrm d\mathbb{P}=
    \int_{\mathscr{X}\times \mathcal{H}} f\circ\pi_\mathscr{X} \,\mathrm d\mu_n
    \xrightarrow[]{n\to\infty}
    \int_{\mathscr{X}\times \mathcal{H}} f\circ\pi_\mathscr{X} \,\mathrm d\mu=
    \int_{\Omega} f(\mathbb{Y})\,\mathrm d\mathbb{P}.
    \end{align*}
    \item We take the weak topology on $\mathcal{H}$ to ensure equicoercivity of $(F_T)_T$. 
    \item The second condition in Definition \ref{def:wekconv} is not very restrictive for our application because an asymptotically infimizing sequence satisfies this condition.
    \end{itemize}
     
\end{rmk}
Since for every $u\in\mathbb{H}$ there exists an associated $\mu\in\mathcal{X}$ (see \eqref{eq:X}) we can rewrite the optimization problem \eqref{eq:freeenergy} as 
\begin{align}\label{eq:WinfF}
    W_T(u)=\inf_{\mu\in\mathcal{X}}\tilde{F}_T(\mu),
\end{align}
where
\begin{align*}
    \tilde{F}_t(\mu)&\coloneq \mathbb{E}_\mu\left[f(\mathbb{X}^1_t+Z_t(u))+\Phi_t(\mathbb{X},u)+\Vert \nabla Z_t(u)\Vert_{L^4}^4+\frac{1}{2}\Vert u\Vert_\mathcal{H}^2\right]
\end{align*}
with
\begin{align*}
    \Phi_t(\mathbb{X},u)\coloneq \int4 \nabla\mathbb{X}^1_t\cdot(\nabla Z_t(u))^3 +4\mathbb{X}^3_t\cdot\nabla Z_t(u)\\
    +\sum_{i,j,k,l\in\{1,2\}}2(\delta_{ik}\delta_{jl}+\delta_{il}\delta_{jk}+\delta_{ij}\delta_{kl})(\mathbb{X}^2_t)_{i,j}\partial_k Z_t(u) \partial_l Z_t(u)
\end{align*}
for all $t\in[0,\infty]$, where for $t=\infty$, we take the respective expressions with $T$ replaced by $\infty$.\\\\
Moreover, we extend the infimum in \eqref{eq:WinfF} to the closure of $\mathcal{X}$. This is required for equicoercivity, since we need not only relative compactness, but compactness of the sublevel sets. Note that it is not trivial that the infimum over the extended space is still equal to the infimum over the drifts since we cannot ensure that every measure in $\bar{\mathcal{X}}$ has a distribution that imitates a drift in combination with the Gaussian process $Y$. Nevertheless, the equality of the infima holds.
\begin{thm}\label{inf}
 If $T\in[0,\infty]$ we have
 \begin{align*}
    \inf_{\mu\in\mathcal{X}}\tilde{F}_T(\mu)=\inf_{\mu\in\bar{\mathcal{X}}}\tilde{F}_T(\mu).
 \end{align*}
\end{thm}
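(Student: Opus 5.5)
The inequality $\inf_{\bar{\mathcal{X}}}\tilde F_T\le\inf_{\mathcal{X}}\tilde F_T$ is trivial since $\mathcal{X}\subset\bar{\mathcal{X}}$. For the reverse inequality I would follow the strategy of \cite{Barashkov_2020}. Fix $\mu\in\bar{\mathcal{X}}$ with $\tilde F_T(\mu)<\infty$. The aim is to show $\tilde F_T(\mu)\ge W_T(f)=\inf_{\mathcal{X}}\tilde F_T$, using the representation of $W_T(f)$ as a log-Laplace transform (Proposition \ref{prop:Boue-dupuis} together with \eqref{eq:freeenergy}). The key observation is that the Bou\'e--Dupuis formula gives an inequality for \emph{every} probability measure carrying a Brownian motion and an adapted drift. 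We therefore do not need $\mu$ to come from an element of $\mathbb{H}$; we only need $\mu$ to be realised as the law of such a pair on some filtered probability space.

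\textbf{Step 1: structure of limit points.} Let $\mu_n=\Law_\mathbb{P}(\mathbb{Y},u^n)\to\mu$ in $\mathcal{Y}$. By Remark \ref{law}, $\Law_\mu(\mathbb{X})=\Law_\mathbb{P}(\mathbb{Y})$. Because $Y$ can be recovered from $\mathbb{X}^1$ (and $X$ from $Y$ via the invertible multipliers $J_s$ on the relevant frequencies, or alternatively by carrying $X$ along as an extra coordinate), under $\mu$ the process $X$ is again a cylindrical Brownian motion. The next thing to show is that $u$ is adapted in a weak sense: for each $t$, the restriction $u|_{[0,t]}$ is independent of the increments $(X_s-X_t)_{s\ge t}$, conditionally on $\mathcal{F}_t^X$. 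This property holds for every $\mu_n$ by predictability of $u^n$. It is stable under weak convergence on $\mathscr{X}\times\mathcal{H}_w$, since it can be expressed through identities $\mathbb{E}[g(u|_{[0,t]},X|_{[0,t]})h(X_{\cdot+t}-X_t)]=\mathbb{E}[g\cdot\mathbb{E}h]$ with bounded continuous test functions $g$ and $h$; weak continuity of $g$ in $u$ is obtained via pairings with test functions in $\mathcal{H}$. Hence $\mu$ is the law of a weak solution, i.e.\ a Brownian motion together with a drift adapted to a larger filtration $\mathcal{G}_t$ with respect to which $X$ is still Brownian.

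\textbf{Step 2: Bou\'e--Dupuis on the enlarged space.} On $(\mathscr{X}\times\mathcal{H},\mu)$ with filtration $\mathcal{G}$, apply the lower-bound half of Proposition \ref{prop:Boue-dupuis}. This half follows from the entropy inequality \eqref{fenchel} together with Girsanov's theorem, and holds for $\mathcal{G}$-adapted drifts in $L^2$. It yields $\mathbb{E}_\mu[V_T^f(\mathbb{X}^1_T+Z_T(u))+\frac12\Vert u\Vert_\mathcal{H}^2]\ge -\log\mathbb{E}[e^{-V^f_T(Y_T)}]=W_T(f)$. Theorem \ref{thm:renormalization} then identifies the left-hand side with $\tilde F_T(\mu)$. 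For this identification we need the Wick components $\mathbb{X}^2,\mathbb{X}^3$ to be the corresponding renormalised polynomials of $\mathbb{X}^1$ $\mu$-a.s. This holds because it is a closed (measurable) relation satisfied under $\Law_\mathbb{P}(\mathbb{Y})$, and that law is the $\mathbb{X}$-marginal of $\mu$. For $T=\infty$ the same argument applies with the limiting objects, where $W_\infty$ is defined via $\inf_{\mathcal{X}}$ and approximated by finite $T$, or the entropy argument is run directly.

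\textbf{Main obstacle.} The hard step is Step 1: showing that adaptedness survives the weak limit in the weak topology on $\mathcal{H}$, and that the enlarged filtration keeps $X$ Brownian. Integrability issues are a secondary concern: the polynomial growth of $V_T^f$ and the uniform bound $\mathbb{E}_\mu\Vert u\Vert^2_\mathcal{H}<\infty$ should handle them, using Lemma \ref{lem:lubounds} to justify truncating $V_T^f$ from above and passing to the limit by monotone convergence.
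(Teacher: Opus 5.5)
Your proposal is correct and follows essentially the argument of Barashkov--Gubinelli (Lemmas 14 and 17) to which the paper defers. Limit points in $\bar{\mathcal X}$ are laws of drifts adapted to a filtration for which the noise remains Brownian, so the Girsanov/entropy lower bound behind Proposition \ref{prop:Boue-dupuis} still applies; in two dimensions $T=\infty$ is handled like the finite case, which corresponds to your option of running the entropy argument directly.

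One correction: $X$ cannot be recovered from $Y$, because $\sigma_s(n)=0$ for $s\notin[\langle n\rangle,2\langle n\rangle]$ with $\langle n\rangle=(1+|n|^2)^{1/2}$. You therefore need your alternative of carrying $X$ as an extra coordinate and passing to a subsequential limit of $\Law_{\mathbb P}(X,\mathbb Y,u^n)$, whose $(\mathbb X,u)$-marginal is $\mu$.
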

    For a proof, we refer to \cite[Lemma 14, Lemma 17]{Barashkov_2020} . Note, that the proof in the two dimensional setting is simpler since we can treat $T=\infty$ as the finite case.\\\\
Having these definitions we continue with stating the results.\\
\begin{thm}\label{EC}
    The family $(\tilde{F}_T)_T$ is equicoercive on $\bar{\mathcal{X}}$.
\end{thm}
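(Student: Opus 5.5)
Following the definition of equicoercivity, we exhibit a single compact set $\mathcal{K}\subset\bar{\mathcal{X}}$ on which every $\tilde{F}_T$ attains its infimum over $\bar{\mathcal{X}}$. Set
\begin{align*}
    \mathcal{K}\coloneq\{\mu\in\bar{\mathcal{X}}\ |\ \mathbb{E}_\mu[\Vert u\Vert_\mathcal{H}^2]\leq M\}
\end{align*}
for a constant $M$ independent of $T$. We first show that this set contains infimizing elements. By Lemma \ref{lem:lubounds}, which transfers to $\tilde{F}_T$ on $\mathcal{X}$ and by approximation to $\bar{\mathcal{X}}$ (the estimates only use $\Law_\mu(\mathbb{X})=\Law_\mathbb{P}(\mathbb{Y})$, Remark \ref{law}, and the uniform moment bounds of Lemma \ref{WickRegularity}), we have $\tilde{F}_T(\mu)\geq -C+\frac{1-\delta}{2}\mathbb{E}_\mu[\Vert u\Vert_\mathcal{H}^2]$. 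Testing with $u=0$ gives $\inf\tilde{F}_T\leq\tilde{F}_T(\Law(\mathbb{Y},0))\leq C$, uniformly in $T$. Hence every $\mu$ with $\tilde{F}_T(\mu)\leq \inf\tilde{F}_T+1$ satisfies $\mathbb{E}_\mu[\Vert u\Vert_\mathcal{H}^2]\leq 2(2C+1)/(1-\delta)=:M$. Therefore $\inf_{\mathcal{K}}\tilde{F}_T=\inf_{\bar{\mathcal{X}}}\tilde{F}_T$ for all $T\in[0,\infty]$.

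It remains to prove that $\mathcal{K}$ is compact in the topology of Definition \ref{def:wekconv}. The second condition of that convergence, $\sup_n\mathbb{E}_{\mu_n}[\Vert u\Vert^2_\mathcal{H}]<\infty$, holds automatically on $\mathcal{K}$. So it suffices to show that $\mathcal{K}$ is tight on $\mathscr{X}\times\mathcal{H}_w$, and then to use Prokhorov's theorem together with closedness.

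For tightness, the $\mathscr{X}$-marginal is the fixed law $\Law_\mathbb{P}(\mathbb{Y})$, which is a single Radon measure on a Polish space and hence tight. For the $\mathcal{H}_w$-marginal, closed balls $B_R\subset\mathcal{H}$ are weakly compact, and Chebyshev gives $\mu(\Vert u\Vert_\mathcal{H}>R)\leq M/R^2$. Products of these compact sets are compact in the product space, so $\mathcal{K}$ is tight. Since $\mathcal{H}_w$ is not metrizable, we apply Prokhorov on the product $\mathscr{X}\times(B_R,\text{weak})$, where each $B_R$ is compact and metrizable. This yields relative sequential compactness, and the topology is first countable, as Theorem \ref{minimizer} requires.

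Closedness is the main obstacle. The limit $\mu$ of a sequence in $\mathcal{K}$ lies in $\bar{\mathcal{X}}$ by definition of the closure. The moment bound survives the limit because $u\mapsto\Vert u\Vert^2_\mathcal{H}\wedge N$ is weakly lower semicontinuous and bounded below: weak convergence gives $\mathbb{E}_\mu[\Vert u\Vert^2\wedge N]\leq\liminf_n\mathbb{E}_{\mu_n}[\Vert u\Vert^2]\leq M$, and letting $N\to\infty$ with monotone convergence gives $\mathbb{E}_\mu[\Vert u\Vert^2_\mathcal{H}]\leq M$. The delicate points are (i) making Prokhorov rigorous on the non-metrizable $\mathcal{H}_w$, handled via the metrizable balls as above, and (ii) checking that the lower bound of Lemma \ref{lem:lubounds} really holds on all of $\bar{\mathcal{X}}$ and not just on $\mathcal{X}$. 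For (ii), the plan is to use Theorem \ref{inf}, which shows it suffices to work with near-infimizers in $\mathcal{X}$, and to approximate a general element of $\bar{\mathcal{X}}$ by elements of $\mathcal{X}$. Following \cite{Barashkov_2020}, these are the places where we would check the details.
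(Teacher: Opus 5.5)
Your argument is the same as the paper's, which defers to Barashkov--Gubinelli, Corollary 2: the moment sublevel set $\{\mu\in\bar{\mathcal{X}}:\mathbb{E}_\mu[\Vert u\Vert_\mathcal{H}^2]\leq K\}$ is compact by a Prokhorov argument, and the uniform lower bound \eqref{lubounds} together with the test drift $u=0$ places all near-infimizers inside it; you give more detail than the paper does. Your worry (ii) is unnecessary: the estimates behind Lemma \ref{lem:lubounds} hold pathwise in $(\mathbb{X},u)$, and the $\mathbb{X}$-dependent terms only depend on $\Law_\mu(\mathbb{X})=\Law_\mathbb{P}(\mathbb{Y})$, so the lower bound holds on all of $\bar{\mathcal{X}}$ directly and no approximation argument is needed (alternatively, Theorem \ref{inf} lets you take near-infimizers in $\mathcal{X}$).
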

For a proof see \cite[Corollary 2]{Barashkov_2020}. By a Prohorov type argument, the set $\mathcal{K}\coloneq \{\mu\in\bar{\mathcal{X}}:\mathbb{E}[\Vert u\Vert_\mathcal{H}^2]\leq K\}$ is compact for every $K\in \R$. Together with the lower bound \eqref{lubounds} containing the norm of $u$ one obtains equicoercivity.\\

\begin{thm}\label{thm:GC}
    The family $(\tilde{F}_T)_T$ $\varGamma$-converges to $\tilde{F}_\infty$ on $\bar{\mathcal{X}}$. 
\end{thm}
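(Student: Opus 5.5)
We prove the two inequalities separately, following \cite[Theorem 6]{Barashkov_2020}. Throughout we use that every $\mu\in\bar{\mathcal{X}}$ has $\Law_\mu(\mathbb{X})=\Law_\mathbb{P}(\mathbb{Y})$ (Remark \ref{law}). The key deterministic input is that for fixed $\mathbb{X}\in\mathscr{X}$ and $u\in\mathcal{H}$ the map $(t,\mathbb{X},u)\mapsto (\mathbb{X}_t, Z_t(u))$ is continuous from $[0,\infty]\times\mathscr{X}\times\mathcal{H}_w$ into $\mathscr{X}_t\times H^2$ with the weak topology. Moreover, $Z_t(u)\to Z_\infty(u)$ strongly in $H^{2-\varepsilon}$, uniformly on bounded sets of $\mathcal{H}$. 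This holds because $\sigma_t$ is supported on frequencies $|n|\lesssim t$, so $Z_\infty(u)-Z_t(u)$ only involves frequencies $\gtrsim t$, and the estimate $\Vert\nabla Z_t(u)\Vert_{H^1}\lesssim\Vert u\Vert_\mathcal{H}$ gains a factor $t^{-\varepsilon}$ on that range.

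\textbf{Liminf inequality.} Let $\mu_T\to\mu$ in $\bar{\mathcal{X}}$, so in particular $\sup_T\mathbb{E}_{\mu_T}\Vert u\Vert_\mathcal{H}^2<\infty$.
\begin{itemize}
\item The term $\frac12\Vert u\Vert_\mathcal{H}^2$ is weakly lower semicontinuous and bounded below, so by the portmanteau theorem $\mathbb{E}_\mu[\frac12\Vert u\Vert^2]\le\liminf\mathbb{E}_{\mu_T}[\frac12\Vert u\Vert^2]$.
\item For $\Vert\nabla Z_T(u)\Vert_{L^4}^4$, the compact embedding $H^1\hookrightarrow L^4$ in $d=2$ shows that $(t,u)\mapsto\Vert\nabla Z_t(u)\Vert_{L^4}^4$ is lower semicontinuous, in fact continuous on bounded sets of $\mathcal{H}_w$. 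Hence this term is also handled by portmanteau, via Skorokhod representation if convenient.
\item The terms $f(\mathbb{X}^1_T+Z_T(u))$ and $\Phi_T(\mathbb{X},u)$ are continuous along the convergence. Each pairing in $\Phi_T$ is between $\mathbb{X}^2_T,\mathbb{X}^3_T\in\mathscr{C}^{-\kappa}$ or $\nabla\mathbb{X}^1_T\in\mathscr{C}^{-\kappa}$ and products of $\nabla Z_T(u)$, which converge strongly in $W^{\kappa',p}$ by compactness. They are not bounded, however, so I would use uniform integrability. By Lemma \ref{lem:generalbounds} and Lemma \ref{WickRegularity}, $|\Phi_T|+|f|$ is dominated by $\delta(\Vert u\Vert^2_\mathcal{H}+\Vert\nabla Z_T\Vert_{L^4}^4)+C(\delta)\,Q_T(\mathbb{X})$, where $Q_T$ is a polynomial in Besov norms of $\mathbb{X}$ with uniformly bounded higher moments.
\end{itemize}
The main technical obstacle is exactly this domination. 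Since $\Vert u\Vert^2$ is only bounded in $L^1$, not uniformly integrable, I would split $\Phi_T=\Phi_T\mathbbm{1}_{\{\Vert u\Vert\le R\}}+\Phi_T\mathbbm{1}_{\{\Vert u\Vert> R\}}$. The first part converges by bounded continuity. The second part is bounded below by $-\delta(\ldots)-C(\delta)\mathbb{E}[Q\mathbbm{1}]$, and the $\delta$-portion is absorbed into the coercive terms, using the lower bound of Lemma \ref{lem:lubounds}. Letting $R\to\infty$ and then $\delta\to0$ gives $\tilde F_\infty(\mu)\le\liminf\tilde F_T(\mu_T)$.

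\textbf{Limsup inequality.} For $\mu\in\bar{\mathcal{X}}$ take the constant recovery sequence $\mu_T=\mu$. Then $\Vert\nabla Z_T(u)-\nabla Z_\infty(u)\Vert_{H^{1-\varepsilon}}\lesssim T^{-\varepsilon}\Vert u\Vert_\mathcal{H}$. Together with $\mathbb{X}_T\to\mathbb{X}_\infty$ in $\mathscr{X}$ (continuity at $t=\infty$) and the moment bounds of Lemma \ref{WickRegularity}, this gives $\Phi_T\to\Phi_\infty$ and $\Vert\nabla Z_T\Vert_{L^4}^4\to\Vert\nabla Z_\infty\Vert_{L^4}^4$ in $L^1(\mu)$. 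The $L^4$ terms converge by dominated convergence, since $\Vert\nabla Z_T\Vert_{L^4}^4\lesssim\Vert u\Vert^4$. If $\mathbb{E}_\mu\Vert u\Vert^4=\infty$ but $\tilde F_\infty(\mu)<\infty$, we instead bound $\Vert\nabla Z_T(u)\Vert_{L^4}\lesssim\Vert\nabla Z_\infty(u)\Vert_{L^4}$ by the $L^p$-boundedness of the Fourier multiplier $\zeta_T(D)$-type operator, where $Z_T(u)$ is written as a time-truncation of $Z_\infty(u)$ up to a bounded multiplier. Thus $\tilde F_T(\mu)\to\tilde F_\infty(\mu)$, which is in fact stronger than the limsup inequality.
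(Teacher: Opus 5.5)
Your $\liminf$ argument follows the route the paper indicates: weak-to-strong continuity of $u\mapsto\nabla Z_\cdot(u)$ from Lemma \ref{compactZ}, lower semicontinuity of $\Vert u\Vert_\mathcal{H}^2$, and the pointwise bounds behind Lemma \ref{lem:lubounds} as an integrable minorant. Two remarks on it. The splitting along $\{\Vert u\Vert_\mathcal{H}\le R\}$ is unnecessary, and its ``bounded continuity'' step is not accurate: $\Phi_T$ is unbounded in $\mathbb{X}$, and the indicator is only upper semicontinuous on $\mathcal{H}_w$. It is simpler to apply Fatou/portmanteau to the integrand of $\tilde F_T$ plus $C+C(\delta)Q_T(\mathbb{X})$. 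This is nonnegative, and $\mathbb{E}_{\mu_T}Q_T(\mathbb{X})$ converges because $\Law_{\mu_T}(\mathbb{X})=\Law_\mathbb{P}(\mathbb{Y})$.

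The genuine gap is in the $\limsup$. The bound $\Vert\nabla Z_T(u)\Vert_{L^4}\lesssim\Vert\nabla Z_\infty(u)\Vert_{L^4}$ is false, because for time-dependent $u$ the field $Z_T(u)$ is not a Fourier multiplier applied to $Z_\infty(u)$. Since $\sigma_s(n)\neq0$ only for $s\in(\langle n\rangle,2\langle n\rangle)$, the $n$-th coefficient of $Z_\infty(u)$ is $\int_{\langle n\rangle}^{2\langle n\rangle}\sigma_s(n)(|n|^2+|n|^4)^{-1/2}\hat u_s(n)\,ds$. For $T$ inside this interval, the coefficient of $Z_T(u)$ is only a partial integral, which can be nonzero while the full integral vanishes.

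As a result, the constant recovery sequence actually fails. Take a deterministic $v\in\mathcal{H}$ with the following properties:
\begin{itemize}
\item it is supported on modes $\pm n_k$ with $\langle n_k\rangle<\langle n_{k+1}\rangle<2\langle n_k\rangle$, with square-summable amplitudes;
\item its time profiles change sign so that $Z_\infty(v)=0$, yet $Z_T(v)\neq0$ for every $T>\langle n_1\rangle$.
\end{itemize}
Set $u=Av$, where $A$ is measurable with respect to $(X_s)_{s\le1}$ and satisfies $\mathbb{E}A^2<\infty=\mathbb{E}A^4$; then $u\in\mathbb{H}$, since $v_s=0$ for $s\le\sqrt2$. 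For $\mu=\Law_\mathbb{P}(\mathbb{Y},u)\in\mathcal{X}$ one has $\tilde F_\infty(\mu)=\mathbb{E}[f(Y_\infty)]+\frac12\mathbb{E}\Vert u\Vert^2_\mathcal{H}<\infty$. On the other hand $\mathbb{E}\Vert\nabla Z_T(u)\Vert^4_{L^4}=\infty$, so the lower bound in \eqref{lubounds} gives $\tilde F_T(\mu)=+\infty$ for all large $T$. The moment issue you flagged is therefore not a technicality, and the claim $\tilde F_T(\mu)\to\tilde F_\infty(\mu)$ is wrong in general.

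What repairs the $\limsup$ is to make your multiplier heuristic true by modifying the drift, i.e.\ to use a non-constant recovery sequence.
\begin{itemize}
\item Fix $\theta\in C^\infty([0,\infty);[0,1])$ with $\theta=1$ on $[0,\frac14]$ and $\theta=0$ on $[\frac12,\infty)$. Put $(M_Tu)_s:=\theta(\langle D\rangle/T)u_s$ and $\mu_T:=(\mathrm{id}\times M_T)_*\mu$.
\item If $\theta(\langle n\rangle/T)\neq0$ then $2\langle n\rangle<T$, so $\sigma_s(n)=0$ for $s\ge T$. Hence $Z_T(M_Tu)=\theta(\langle D\rangle/T)Z_\infty(u)$, and by Mihlin $\Vert\nabla Z_T(M_Tu)\Vert_{L^4}\lesssim\Vert\nabla Z_\infty(u)\Vert_{L^4}$ uniformly in $T$.
\item $M_T$ is a contraction on $\mathcal{H}$, preserves predictability and is weak-weak continuous. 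Pushing forward an approximating sequence from $\mathcal{X}$ therefore shows $\mu_T\in\bar{\mathcal{X}}$.
\item Since $M_Tu\to u$ strongly in $\mathcal{H}$, you get $\mu_T\to\mu$ and $\mathbb{E}\Vert M_Tu\Vert^2_\mathcal{H}\to\mathbb{E}\Vert u\Vert^2_\mathcal{H}$.
\item All integrands are dominated by $C(1+\Vert u\Vert^2_\mathcal{H}+\Vert\nabla Z_\infty(u)\Vert^4_{L^4}+Q_T(\mathbb{X}))$. This family is uniformly integrable when $\tilde F_\infty(\mu)<\infty$, so Vitali gives $\tilde F_T(\mu_T)\to\tilde F_\infty(\mu)$.
\end{itemize}
This low-frequency projection is in the spirit of the regularized $Z^b$ of \cite{Barashkov_2020} used in the three-dimensional part.
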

 For a proof see \cite[Theorem 6]{Barashkov_2020}. An important step in the proof is to show  that $(\Phi_T(\mathbb{X}^T,u^T))_T$ converges if $(\mathbb{X}^T)_T$ converges in $\mathscr{X}$ and $(u^T)_T$ converges in $\mathcal{H}_w$. This follows by the compactness of $\nabla Z:\mathcal{H}\to C([0,\infty]; W^{\varepsilon,4})$ for small $\varepsilon$, which we show in Lemma \ref{compactZ} below in a more general setting, and the estimates 
\begin{align*}
    |\int [\![\nabla Y_T^3]\!]\cdot\nabla Z_T(u)|&\leq \Vert [\![\nabla Y_T^3]\!]\Vert_{W^{-\varepsilon,\frac{4}{3}}}\Vert \nabla Z_T(u)\Vert_{W^{\varepsilon,4}}\\
    |\int [\![\partial_iY_T\partial_j Y_T]\!]\partial_k Z_T(u) \partial_l Z_T(u)|&\leq \Vert[\![\partial_iY_T\partial_j Y_T]\!]\Vert_{W^{-\varepsilon,\frac{5}{2}}}\Vert (\nabla Z_T(u))^2\Vert_{W^{\varepsilon,\frac{5}{3}}}\\&\leq\Vert [\![\partial_iY_T\partial_j Y_T]\!]\Vert_{W^{-\varepsilon,\frac{5}{2}}}\Vert \nabla Z_T(u)\Vert_{W^{\varepsilon,\frac{20}{7}}}\Vert \nabla Z_T(u)\Vert_{L^{4}}\\
   |\int \nabla Y_T\cdot(\nabla Z_T(u))^3|&\leq \Vert \nabla Y_T\Vert_{W^{-\varepsilon,5}}\Vert (\nabla Z_T(u))^3\Vert_{W^{\varepsilon,\frac{5}{4}}}\\&\leq\Vert \nabla Y_T\Vert_{W^{-\varepsilon,5}}\Vert \nabla Z_T(u)\Vert_{W^{\varepsilon,\frac{10}{3}}}\Vert \nabla Z_T(u)\Vert^2_{L^{4}}.
\end{align*}
These estimates are shown rigorously in Lemma \ref{lem:generalbounds}.
Note that Theorems \ref{EC} and \ref{thm:GC} together yield the convergence of the infima $\inf_{\bar{\mathcal{X}}} \tilde{F}_T$, see Theorem \ref{minimizer}. Finally, to get the existence of a finite limit, one needs to bound $(\inf F_T)_T$.

\begin{thm}\label{constantbounds}
    For all $f\in C(\mathscr{C}^{1-\kappa};\R)$ with linear growth there exists a finite constant $C$ such that
    \begin{align*}
        \sup_T|W_T(f)|\leq C.
    \end{align*}

\end{thm}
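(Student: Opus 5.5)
We obtain both bounds directly from the variational representation \eqref{eq:freeenergy}, $W_T(f)=\inf_{u\in\mathbb{H}}F_T(u)$, combined with the two-sided estimate of Lemma \ref{lem:lubounds}.

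\emph{Upper bound.} Take the trivial drift $u\equiv 0$, which lies in $\mathbb{H}$. Then $Z_T(0)=0$, and the upper estimate in \eqref{lubounds} gives $W_T(f)\le F_T(0)\le C$. The same bound can also be read off directly: with $u=0$ the functional $F_T(0)$ reduces to $\mathbb{E}[f(Y_T)]$, because $\Phi_T(0)=0$ and $\|\nabla Z_T(0)\|_{L^4}^4=0$. This expectation is bounded uniformly in $T$, since $f$ has linear growth and $\sup_T\mathbb{E}\|Y_T\|_{\mathscr{C}^{1-\kappa}}<\infty$ by the Besov estimate of Section 1.2.

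\emph{Lower bound.} For arbitrary $u\in\mathbb{H}$, the left inequality in \eqref{lubounds} gives
\begin{align*}
F_T(u)\ge -C+(1-\delta)\,\mathbb{E}\Big[\|\nabla Z_T(u)\|_{L^4}^4+\tfrac12\|u\|_{\mathcal{H}}^2\Big]\ge -C,
\end{align*}
because $1-\delta>0$ and the bracket is nonnegative. Taking the infimum over $u$ yields $W_T(f)\ge -C$. The constant $C$ here must not depend on $T$ or on $u$.

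\emph{Main obstacle.} The substantive point is this uniformity of $C$ in Lemma \ref{lem:lubounds}. Its proof absorbs every $u$-dependent piece of $f(Y_T+Z_T(u))$ and of $\Phi_T(u)$, via Young's inequality, into $\delta(\|u\|_{\mathcal{H}}^2+\|\nabla Z_T(u)\|_{L^4}^4)$, using $\|\nabla Z_T(u)\|_{H^1}\lesssim\|u\|_{\mathcal{H}}$ uniformly in $T$. What is left over is $C(\delta)$ times expectations of stochastic norms:
\[
\mathbb{E}\|[\![\nabla Y_T^3]\!]\|_{H^{-1}}^2,\qquad \mathbb{E}\|[\![\partial_iY_T\partial_jY_T]\!]\|_{W^{-\varepsilon,5/2}}^{5/2},\qquad \mathbb{E}\|\nabla Y_T\|_{W^{-\varepsilon,5}}^5,\qquad \mathbb{E}\|Y_T\|_{\mathscr{C}^{1-\kappa}}.
\]
I would therefore check that Lemma \ref{WickRegularity} bounds each of these uniformly in $T\in[0,\infty]$. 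This should follow from hypercontractivity of Wiener chaoses together with the covariance formula \eqref{eq:var}: the Fourier coefficients of $\nabla Y_T$ have variance of order $|n|^2\zeta_T^2(n)/(|n|^2+|n|^4)\sim|n|^{-2}$, so the $k$-th Wick powers live in $\mathscr{C}^{-\kappa}$ uniformly in $T$ in $d=2$. A single $\delta<1$ must be fixed so that all absorptions are valid simultaneously. Once this is done, $|W_T(f)|\le C$ holds for all $T$, and the finite limit $W(f)$ exists by Theorems \ref{EC}, \ref{thm:GC} and \ref{minimizer}.
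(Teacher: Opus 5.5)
Your argument is correct and is essentially the paper's proof: the upper bound comes from testing $W_T(f)=\inf_u F_T(u)$ with $u\equiv 0$ in \eqref{lubounds}, and the lower bound from the left inequality in \eqref{lubounds} (the bracketed expectation is nonnegative), with uniformity in $T$ inherited from Lemma \ref{lem:lubounds}. Your additional direct check that $F_T(0)=\mathbb{E}[f(Y_T)]$ is consistent with this but not needed.
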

\begin{proof}
    The boundedness follows from  \eqref{lubounds} and plugging in $u\equiv 0$.
\end{proof}
Summarizing, we conclude with the following result.
\begin{thm}\label{maintheorem}
We have
\begin{align*}
    \lim_{T\to\infty}W_T(f)=\lim_{T\to\infty}\inf_{u\in\mathbb{H}}F_T(u)=\inf_{u\in\mathbb{H}}F_\infty(u)=:W(f)
\end{align*}
where
\begin{align*}
F_\infty(u)=\mathbb{E}\left[\varPhi_\infty(\mathbb{Y},u)+\Vert \nabla Z_\infty(u)\Vert_{L^4}^4+\frac{1}{2}\Vert u \Vert_\mathcal{H}^2\right].
\end{align*}
\end{thm}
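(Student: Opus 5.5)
The plan is to assemble Theorem \ref{maintheorem} from the results of this section, working throughout on the space $\bar{\mathcal{X}}$ of Definition \ref{def:wekconv}.

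\emph{Step 1: pass to the measure formulation.} For every finite $T$, the representation \eqref{eq:freeenergy} together with Theorem \ref{thm:renormalization} gives $W_T(f)=\inf_{u\in\mathbb{H}}F_T(u)$. Sending $u\mapsto\Law_\mathbb{P}(\mathbb{Y},u)\in\mathcal{X}$, we have $F_T(u)=\tilde F_T(\Law_\mathbb{P}(\mathbb{Y},u))$. This holds because $F_T(u)$ depends on $(Y_T,Z_T(u))$ only through the renormalized objects $\mathbb{Y}_T$ and $u$, all of which are measurable functions of $(\mathbb{Y},u)$. Hence $W_T(f)=\inf_{\mathcal{X}}\tilde F_T$, which is \eqref{eq:WinfF}. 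The same identification for $T=\infty$ gives $\inf_{u\in\mathbb{H}}F_\infty(u)=\inf_{\mathcal{X}}\tilde F_\infty$. For $F_\infty$ to make sense, $\Phi_\infty(\mathbb{Y},u)$ must be well defined; I will get this from Lemma \ref{WickRegularity} and the duality estimates stated after Theorem \ref{thm:GC}, which hold with $T=\infty$. By Theorem \ref{inf}, applied for both finite $T$ and $T=\infty$, both infima may be taken over $\bar{\mathcal{X}}$ instead of $\mathcal{X}$.

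\emph{Step 2: apply the fundamental theorem of $\Gamma$-convergence.} By Theorem \ref{EC} the family $(\tilde F_T)_T$ is equicoercive on $\bar{\mathcal{X}}$, and by Theorem \ref{thm:GC} it $\Gamma$-converges to $\tilde F_\infty$. Since Theorem \ref{minimizer} is stated for sequences, I will apply it along an arbitrary sequence $T_n\to\infty$. It gives that $\tilde F_\infty$ attains its minimum on $\bar{\mathcal{X}}$ and that $\min_{\bar{\mathcal{X}}}\tilde F_\infty=\lim_n\inf_{\bar{\mathcal{X}}}\tilde F_{T_n}$. The limit does not depend on the chosen sequence, so the full limit $T\to\infty$ exists.

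\emph{Step 3: the expected obstacle.} The difficulty is the first countability of $\bar{\mathcal{X}}$ assumed in Theorem \ref{minimizer}, and the need to exclude the degenerate value $-\infty$. On sets with $\sup\mathbb{E}_\mu\|u\|^2_{\mathcal{H}}$ bounded, the weak topology on $\mathscr{X}\times\mathcal{H}_w$ is metrizable, because bounded subsets of $\mathcal{H}_w$ are metrizable and $\mathscr{X}$ is Polish. Equicoercivity lets us restrict to such a sublevel compact $\mathcal{K}$. Theorem \ref{constantbounds} gives $\sup_T|W_T(f)|\le C$, and together with the lower bound \eqref{lubounds} this shows the infima are finite. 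Near-minimizers therefore satisfy $\mathbb{E}\|u\|^2_{\mathcal{H}}\le K$ uniformly in $T$, which justifies working inside $\mathcal{K}$.

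\emph{Step 4: conclude.} Combining the three steps gives
\[
\lim_T W_T(f)=\lim_T\inf_{\bar{\mathcal{X}}}\tilde F_T=\min_{\bar{\mathcal{X}}}\tilde F_\infty=\inf_{\mathcal{X}}\tilde F_\infty=\inf_{u\in\mathbb{H}}F_\infty(u).
\]
The formula for $F_\infty$ in the statement omits the term $f$. I read this as an abbreviation of $\tilde F_\infty$ evaluated at the laws of $(\mathbb{Y},u)$, with the $f$-term included in the same way as for finite $T$. With that reading, the last equality is exactly the definition of $W(f)$.
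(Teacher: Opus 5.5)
Your proposal is correct and follows the paper's proof: you apply Theorem \ref{minimizer} together with Theorems \ref{EC} and \ref{thm:GC} on $\bar{\mathcal{X}}$, and you use Theorem \ref{inf} for $T\in[0,\infty]$ to identify $\inf_{\mathbb{H}}F_T=\inf_{\mathcal{X}}\tilde F_T=\inf_{\bar{\mathcal{X}}}\tilde F_T$. Your additional remarks go beyond what the paper writes but are consistent with its argument: applying the theorem along sequences $T_n\to\infty$, getting finiteness of the infima from Theorem \ref{constantbounds} and \eqref{lubounds}, and noting that the displayed $F_\infty$ should contain the $f$-term.
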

\begin{proof}
By Theorems \ref{minimizer}, \ref{EC}, and \ref{thm:GC} we obtain
\begin{align*}
    \lim_{T\to\infty}\inf_{\mu\in\bar{\mathcal{X}}}\tilde{F}_T(\mu)=\min_{\mu\in\bar{\mathcal{X}}}\tilde{F}_\infty(\mu).
\end{align*}
Moreover, by Theorem \ref{inf} we have for $T\in[0,\infty]$
\begin{align*}
    \inf_{u\in\mathbb{H}}F_T(u)=\inf_{\mu\in\mathcal{X}}\tilde{F}_T(\mu)=\inf_{\mu\in\bar{\mathcal{X}}}\tilde{F}_T(\mu).
\end{align*}
 Combining these equalities completes the proof.
\end{proof}
This gives us an explicit formula for the limit of the free energies and therefore an explicit description of the limit measure $\nu$.

\section{Aviles-Giga}\label{sec:AvilesGiga}
We start by considering the Hamiltonian
\begin{align}
    H(\varphi)=\int_{\mathbb{T}^2}|\nabla\varphi|^4-|\nabla\varphi|^2+|\Delta\varphi|^2\,dx.
\end{align}
Similarly as in \cite{barashkov2022variational}, we write 
\begin{align*}
    H(\varphi)=\int_{\mathbb{T}^2}|\nabla\varphi|^4-2|\nabla\varphi|^2+|\nabla\varphi|^2+|\Delta\varphi|^2\,dx,
\end{align*}
and therefore
\begin{align*}
    F_T(v)\coloneq\mathbb{E}\bigg[ f(Y_T+Z_T(u))+\int_{\mathbb{T}^d}(|\nabla Y_T+\nabla Z_T(u)|^4-2|\nabla Y_T+\nabla Z_T(u)|^2\\-\mathcal{O}_T(|\nabla Y_T+\nabla Z_T(u)|^2))\ dx+\frac{1}{2}\int_0^\infty\Vert v_s\Vert^2_{L^2}\ ds\bigg].
\end{align*}
which leads to a renormalization with the same Gaussian process $Y$ (see \eqref{eq:Y}).
\begin{thm}
    
Choosing 
\begin{align*}
     a_T=&6\langle\partial_1Y,\partial_1Y\rangle_T(\partial_1Y_T)^2+6\langle\partial_2Y,\partial_2Y\rangle_T(\partial_2Y_T)^2+2\langle\partial_2Y,\partial_2Y\rangle_T(\partial_1Y_T)^2\\
    &+2\langle\partial_1Y,\partial_1Y\rangle_T(\partial_2Y_T)^2+4\langle\partial_1Y,\partial_2Y\rangle_T^2-3\langle\partial_1Y,\partial_1Y\rangle_T^2-3\langle\partial_2Y,\partial_2Y\rangle_T^2\\
    &-2\langle\partial_1Y,\partial_1Y\rangle_T\langle\partial_2Y,\partial_2Y\rangle_T\\
    &+8 \int_0^T\int_0^s \partial_1 Y_u\ d(\partial_2 Y_u) \ d\langle\partial_1 Y,\partial_2Y\rangle_s +8 \int_0^T\int_0^s \partial_2 Y_u\ d(\partial_1 Y_u) \ d\langle\partial_1 Y,\partial_2Y\rangle_s\\
    &+8 \int_0^T\int_0^s \partial_1 Y_u\ d\langle\partial_2 Y,\partial_2 Y\rangle_u\ d(\partial_1Y_s)+8 \int_0^T\int_0^s \langle\partial_2 Y,\partial_2 Y\rangle_u\ d(\partial_1 Y_u)\ d(\partial_1Y_s)\\
    &+8 \int_0^T\int_0^s \partial_2 Y_u\ d\langle\partial_1 Y,\partial_1 Y\rangle_u\ d(\partial_2Y_s)+8 \int_0^T\int_0^s \langle\partial_1 Y,\partial_1 Y\rangle_u\ d(\partial_2 Y_u)\ d(\partial_2Y_s),\\
     b_T=&\begin{pmatrix}
        3\langle\partial_1Y,\partial_1Y\rangle_T\partial_1Y_T+2\langle\partial_1Y,\partial_2Y\rangle_T\partial_2Y_T+\langle\partial_2Y,\partial_2Y\rangle_T\partial_1Y_T\\ 
        3\langle\partial_2Y,\partial_2Y\rangle_T\partial_2Y_T+2\langle\partial_1Y,\partial_2Y\rangle_T\partial_1Y_T+\langle\partial_1Y,\partial_1Y\rangle_T\partial_2Y_T
    \end{pmatrix},
    \end{align*}
    \begin{align*}
     &c_T:\R^2\to\R,\quad c_T((x_1,x_2))=&&6\langle\partial_1Y,\partial_1Y\rangle_Tx_1^2+6\langle\partial_2Y,\partial_2Y\rangle_Tx_2^2+2\langle\partial_1Y,\partial_1Y\rangle_Tx_2^2\\
     & &&+2\langle\partial_2Y,\partial_2Y\rangle_Tx_1^2+8\langle\partial_1Y,\partial_2Y\rangle_Tx_1x_2,
     \end{align*}
     and
     \begin{align*}
         d_T=\langle\partial_1Y,\partial_1Y\rangle_T+\langle\partial_2Y,\partial_2Y\rangle_T
     \end{align*}

yields
\begin{align*}
   & \int|\nabla Y_T+\nabla Z_T(u)|^4-2|\nabla Y_T+\nabla Z_T(u)|^2-c_T(\nabla Z_T(u))-4b_T(\nabla Z_T(u))-a_T+2d_T\\
    &=\int[\![\nabla Y_T^4]\!]
    +4[\![\nabla Y_T^3]\!]\cdot\nabla Z_T(u)+4\nabla Y_T\cdot(\nabla Z_T(u))^3
    +(\nabla Z_T(u))^4\\&
    +\sum_{i,j,k,l\in\{1,2\}}2(\delta_{ik}\delta_{jl}+\delta_{il}\delta_{jk}+\delta_{ij}\delta_{kl})[\![\partial_iY_T\partial_j Y_T]\!]\partial_k Z_T(u) \partial_l Z_T(u)\\&-2[\![\nabla Y_T^2]\!]-4(\nabla Y_T)(\nabla Z_T(u))-2(\nabla Z_T(u))^2.
\end{align*}
If we define
\begin{align*}
    &\Phi_T(u)\coloneq\int4\nabla Y_T\cdot(\nabla Z_T(u))^3 +4[\![\nabla Y_T^3]\!]\cdot\nabla Z_T(u)\nonumber\\
    &+\sum_{i,j,k,l\in\{1,2\}}2(\delta_{ik}\delta_{jl}+\delta_{il}\delta_{jk}+\delta_{ij}\delta_{kl})[\![\partial_iY_T\partial_j Y_T]\!]\partial_k Z_T(u) \partial_l Z_T(u)\\
    &-4(\nabla Y_T)(\nabla Z_T(u))-2(\nabla Z_T(u))^2
\end{align*}
we obtain the representation
\begin{align*}
    F_T(u)=\mathbb{E}\left[ f(Y_T+Z_T(u))+\Phi_T(u)+\Vert\nabla Z_T(u)\Vert_{L^4}^4+\frac{1}{2}\int_0^\infty\Vert u_s\Vert^2_{L^2}\ ds\right].
\end{align*}
\end{thm}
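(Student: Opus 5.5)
The identity is purely algebraic, so the plan is to reduce it to Theorem \ref{thm:renormalization} together with a short computation for the new quadratic term. By Theorem \ref{thm:renormalization} we already know
\begin{align*}
\int|\nabla Y_T+\nabla Z_T(u)|^4-c_T(\nabla Z_T(u))-4b_T\cdot\nabla Z_T(u)-a_T
\end{align*}
equals the quartic Wick expansion on the right-hand side of the claim. It therefore remains to treat
\begin{align*}
-2\int|\nabla Y_T+\nabla Z_T(u)|^2+2d_T .
\end{align*}

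First, expand the square pointwise:
\begin{align*}
|\nabla Y_T+\nabla Z_T(u)|^2=|\nabla Y_T|^2+2\nabla Y_T\cdot\nabla Z_T(u)+|\nabla Z_T(u)|^2 .
\end{align*}
Second, use Lemma \ref{Wick-Powers} with $i=j$ and sum over $i\in\{1,2\}$. Setting
\begin{align*}
[\![\nabla Y_T^2]\!]\coloneq\sum_i[\![\partial_iY_T\partial_iY_T]\!],
\end{align*}
we get
\begin{align*}
|\nabla Y_T|^2=[\![\nabla Y_T^2]\!]+\langle\partial_1Y,\partial_1Y\rangle_T+\langle\partial_2Y,\partial_2Y\rangle_T=[\![\nabla Y_T^2]\!]+d_T .
\end{align*}
Hence
\begin{align*}
-2|\nabla Y_T+\nabla Z_T(u)|^2+2d_T=-2[\![\nabla Y_T^2]\!]-4\nabla Y_T\cdot\nabla Z_T(u)-2|\nabla Z_T(u)|^2 .
\end{align*}
Integrating over $\T^2$ and adding this to the identity from Theorem \ref{thm:renormalization} gives exactly the claimed decomposition. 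Here $(\nabla Z_T(u))^2$ is read as $|\nabla Z_T(u)|^2$ according to the notation convention $v^2=|v|^2$.

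For the representation of $F_T$, we plug this identity into the definition of $F_T$ with the renormalization $\mathcal{O}_T$ given by $c_T,4b_T,a_T,-2d_T$. These choices mean that $\mathcal O_T$ collects the deterministic and $Y$-dependent counterterms. We then take expectations. The terms $\int[\![\nabla Y_T^4]\!]$ and $\int[\![\nabla Y_T^2]\!]$ do not depend on $u$. By Lemma \ref{Wick-Powers} they are martingales started at $0$, so their expectations vanish, exactly as in Section \ref{sec:renormalization}. The remaining $u$-dependent terms are collected into $\Phi_T$, together with $\|\nabla Z_T(u)\|_{L^4}^4$ and the drift cost.

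The only point needing care is this expectation step. One must check integrability of the Wick terms and that $Y_T$ is independent of $u$ only through the martingale property, not pathwise. This is handled as before: $\mathbb{E}[\int[\![\cdot]\!]]=0$ follows from Fubini and the mean-zero property at each $x$, using the Gaussian moment bounds noted at the end of the proof of Lemma \ref{Wick-Powers}.
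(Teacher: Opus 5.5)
Your proposal is correct and follows the same route as the paper: the quartic part is exactly the identity of Theorem \ref{thm:renormalization}, the new quadratic part reduces to $|\nabla Y_T|^2=[\![\nabla Y_T^2]\!]+d_T$ via Lemma \ref{Wick-Powers}, and the terms $\int[\![\nabla Y_T^4]\!]$ and $\int[\![\nabla Y_T^2]\!]$, which do not depend on $u$, vanish in expectation. Your closing remark that $Y_T$ is independent of $u$ ``only through the martingale property'' is unnecessary and somewhat confused: these two terms do not involve $u$ at all, so their $\mathbb{P}$-expectation is zero however the predictable drift depends on $Y$.
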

Since 
\begin{align*}
    |\int (\nabla Y_T)(\nabla Z_T(u))|\leq \Vert \nabla Y_T\Vert_{H^{-\varepsilon}}\Vert \nabla Z_T(u)\Vert_{H^{\varepsilon}}&\leq \frac{1}{2}\Vert \nabla Y_T\Vert_{H^{-\varepsilon}}^2+\frac{1}{2}\Vert \nabla Z(u)\Vert_{H^{\varepsilon}}^2\\&\leq\frac{1}{2}\Vert \nabla Y_T\Vert_{H^{-\varepsilon}}^2+\frac{1}{2}\Vert u\Vert_{\mathcal{H}}^2\\
   \text{and\ } |\int(\nabla Z_T(u))^2|&\leq \Vert u\Vert^2_\mathcal{H}
\end{align*}
the additional terms in $\Phi_T$ allow for estimates as in Lemma \ref{lem:lubounds}. Therefore, the $\Gamma$-convergence follows as in the previous section.

\section{Extensions}\label{sec:extension}
In this section, we consider several generalizations to related Hamiltonians. We sketch how one extends the theory to these Hamiltonians and perform only the crucial computations.

\subsection{More general powers}

We consider another variation in the Hamiltonian, i.e. 
\begin{align*}
     H(\varphi)=\int_{\mathbb{T}^2}|\nabla\varphi|^p+|\nabla\varphi|^2+|\Delta\varphi|^2\,dx.
\end{align*}
for an arbitrary $p\in\N$ with $p\geq3$.\\
In two dimensions we can use Wick-renormalization as in the case $p=4$ by subtracting the drift part of the powers (see Theorem \ref{thm:renormalization}). We do not provide the explicit renormalization constants here. \\
We decompose $F_T$ as in \eqref{eq:FT_Phi} collecting all mixed terms in $\Phi_T$ and replacing $\Vert \nabla Z_T\Vert_{L^4}^4$ by $\Vert \nabla Z_T\Vert_{L^p}^p$. To obtain $\Gamma$-convergence following the steps from above we can control $\Phi_T$ similarly as in Lemma \ref{lem:lubounds}. 
\begin{lem}\label{lem:generalbounds}
    Let $p\in\N$ with $p\geq 3$ and $q\in\N$ with $q\leq p-1$. For any $\delta>0$ there exist $C<\infty$, independent of $T$, $\max\{1,\frac{p}{q}2(\frac{q}{p}-\frac{2}{p}+2)^{-1}\}<\beta<\frac{p}{q}$ and $0<\varepsilon\leq \frac{(p-2)\beta}{2(p-q\beta)}$  such that for $\alpha\in\{1,2\}^{p-k}$ and $\gamma\in\{1,2\}^q$
     \begin{align*}
        &|(\int_0^T...\int_0^{s_{k-1}} d(\partial_{\alpha_1}Y_{s_1})...d(\partial_{\alpha_k}Y_{s_{p-q}}))(\partial_{\gamma_1} Z_T(u)...\partial_{\gamma_q}Z_T(u))|\\
        &\lesssim \Vert\int_0^T...\int_0^{s_{k-1}} d(\partial_{\alpha_1}Y_{s_1})...d(\partial_{\alpha_k}Y_{s_{p-q}})\Vert_{W^{-\varepsilon,\frac{\beta}{\beta-1}}}\Vert  \nabla Z_T(u)\Vert_{W^{\varepsilon,\frac{p\beta}{p-(q-1)\beta}}}\Vert \nabla Z_T(u)\Vert_{L^p}^{q-1}
    \end{align*}
    and
    \begin{align*}
       &\mathbb{E}\left[ |(\int_0^T...\int_0^{s_{k-1}} d(\partial_{\alpha_1}Y_{s_1})...d(\partial_{\alpha_k}Y_{s_{p-q}}))(\partial_{\gamma_1} Z_T(u)...\partial_{\gamma_q}Z_T(u))|\right]\\
       &\leq C+\delta\mathbb{E}\left[\Vert \nabla Z_T(u)\Vert_{L^p}^p+\frac{1}{2}\Vert u\Vert_\mathcal{H}^2\right].
    \end{align*}
\end{lem}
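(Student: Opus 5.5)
The plan is to split the statement into a deterministic duality/H\"older estimate (the first inequality) and a Young-type absorption argument (the second). Write $\mathbb{W}$ for the iterated stochastic integral of order $p-q$ and $G=\partial_{\gamma_1}Z_T(u)\cdots\partial_{\gamma_q}Z_T(u)$; the left-hand side is understood as $|\int_{\T^2}\mathbb{W}\,G\,dx|$.

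\textbf{First inequality.} Use the Besov/Sobolev duality $|\int \mathbb{W}G|\lesssim \Vert\mathbb{W}\Vert_{W^{-\varepsilon,\beta'}}\Vert G\Vert_{W^{\varepsilon,\beta}}$ with $\beta'=\frac{\beta}{\beta-1}$. Then apply the fractional Leibniz rule (Kato--Ponce type product estimate) $q$ times, placing the $\varepsilon$ derivatives on one factor and the others in $L^p$:
\[
\Vert G\Vert_{W^{\varepsilon,\beta}}\lesssim \Vert\nabla Z_T(u)\Vert_{W^{\varepsilon,r}}\Vert\nabla Z_T(u)\Vert_{L^p}^{q-1},\qquad \frac1\beta=\frac1r+\frac{q-1}{p}.
\]
This gives $r=\frac{p\beta}{p-(q-1)\beta}$, which is positive because $\beta<\frac pq$. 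This step is routine, and it is where the upper bound $\beta<p/q$ is needed.

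\textbf{Second inequality.} First interpolate $W^{\varepsilon,r}$ between $H^1$, which controls $\nabla Z_T(u)$ via $\Vert\nabla Z_T(u)\Vert_{H^1}\lesssim\Vert u\Vert_{\mathcal H}$ (Section \ref{sec:ZTu}), and $L^p$. By Gagliardo--Nirenberg,
\[
\Vert\nabla Z\Vert_{W^{\varepsilon,r}}\lesssim\Vert\nabla Z\Vert_{H^1}^{\theta}\Vert\nabla Z\Vert_{L^p}^{1-\theta},\qquad \varepsilon-\frac2r=\theta\Bigl(1-1\Bigr)+(1-\theta)\Bigl(-\frac2p\Bigr),
\]
in dimension two. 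This gives $\theta=\varepsilon+\frac2r-\frac2p$, and we also need $\theta\ge\varepsilon$. The product then becomes $\Vert\mathbb{W}\Vert\,\Vert u\Vert_{\mathcal H}^{\theta}\Vert\nabla Z\Vert_{L^p}^{q-\theta}$.

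Next apply Young's inequality with three exponents $a,\frac{2}{\theta},\frac{p}{q-\theta}$. This requires
\[
\frac1a=1-\frac\theta2-\frac{q-\theta}{p}>0,
\]
and yields
\[
C(\delta)\Vert\mathbb W\Vert_{W^{-\varepsilon,\beta'}}^{a}+\delta\bigl(\Vert u\Vert_{\mathcal H}^2+\Vert\nabla Z_T(u)\Vert_{L^p}^p\bigr).
\]
The restrictions on $\beta$ and $\varepsilon$ in the statement are designed to guarantee $\theta\in[0,1]$ and $1/a>0$. The lower bound $\beta>\frac pq\,2(\frac qp-\frac2p+2)^{-1}$ should be equivalent to the positivity of $1/a$ once $\theta$ is substituted, and $\varepsilon\le\frac{(p-2)\beta}{2(p-q\beta)}$ keeps $\theta\le1$. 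I would verify these algebraic equivalences explicitly.

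\textbf{Taking expectations.} The deterministic terms are handled by the Young step above, so it remains to show that $\mathbb{E}\Vert\mathbb W\Vert_{W^{-\varepsilon,\beta'}}^a$ is bounded uniformly in $T$. This follows from Lemma \ref{WickRegularity}: the iterated integrals are Wick-type polynomials of order $p-q$ in $\nabla Y$, whose regularity is $\mathscr C^{-\kappa}$ for every $\kappa>0$ in two dimensions. Gaussian hypercontractivity gives all moments, and the $L^{\beta'}$ norm on the torus is dominated by the $\mathscr C^{-\kappa}$ norm with $\kappa<\varepsilon$. This Wick-regularity input is in the same spirit as the bounds used in Lemma \ref{lem:lubounds}.

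\textbf{Main obstacle.} I expect the difficulty to lie in the bookkeeping of exponents. One must choose $\beta,\varepsilon$ simultaneously so that $r\ge1$, $\theta\in[0,1]$ and $1/a>0$ all hold, together with an $\varepsilon>0$ for which the Wick objects still lie in $W^{-\varepsilon,\beta'}$. The case $q=p-1$ is tightest, since there $\beta$ must be close to $p/(p-1)$. I would first check the nonemptiness of the admissible window for $\beta$ in that case, and if necessary shrink $\varepsilon$ further.
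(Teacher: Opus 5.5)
Your route is the paper's: duality between $W^{-\varepsilon,\beta'}$ and $W^{\varepsilon,\beta}$; the iterated fractional Leibniz rule, which the paper runs as an induction on $q$ via Lemma \ref{lem:leibnitz} and which gives $r=\frac{p\beta}{p-(q-1)\beta}$; interpolation of $\nabla Z_T(u)$ between $H^1$ and $L^p$; Young's inequality; then Lemma \ref{lem:H^1boundZ} and Lemma \ref{WickRegularity}. Your first inequality is fine.

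The interpolation step, however, fails as written, and that step is where the content of the lemma lies. Solving your own scaling relation $\varepsilon-\frac2r=-(1-\theta)\frac2p$ gives $\theta=1-\frac pr+\frac{p\varepsilon}{2}$, not $\varepsilon+\frac2r-\frac2p$. Moreover, $\beta<\frac pq$ forces $r<p$. In that case this scaling-sharp exponent is smaller than $\varepsilon$ exactly when $\varepsilon<\frac{2(p-q\beta)}{(p-2)\beta}$, which is the small-$\varepsilon$ regime you need. So the homogeneous Gagliardo--Nirenberg inequality violates its own admissibility condition $\theta\ge\varepsilon$ (high-frequency plane waves show this condition is necessary), and it cannot be invoked. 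The paper instead uses the convex interpolation of Lemma \ref{lem:interpolation} with $s_1=1$, $p_1=2$, $s_2=0$, $p_2=p$. That is, $\frac1r=\frac\theta2+\frac{1-\theta}{p}$, giving $\theta=\frac{2(p-q\beta)}{(p-2)\beta}$, combined with $\Vert\cdot\Vert_{W^{\varepsilon,r}}\le\Vert\cdot\Vert_{W^{\theta,r}}$ for $\varepsilon\le\theta$.

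With this $\theta$ the bookkeeping closes at once: $\frac\theta2+\frac{q-\theta}{p}=\frac qp+\theta\,\frac{p-2}{2p}=\frac1\beta$. Young's inequality with exponents $\beta',\frac2\theta,\frac{p}{q-\theta}$ therefore gives $a=\beta'$, and $1/a>0$ is just $\beta>1$. So the two equivalences you planned to verify do not hold. The lower bound on $\beta$ encodes $\theta\le1$ (equivalently $r\ge2$), i.e. $\beta\ge\frac{2p}{p+2q-2}$, and the constraint on $\varepsilon$ is $\varepsilon\le\theta$; neither concerns $1/a$.

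Be aware that the printed hypotheses contain slips at exactly this point. First, $\frac{(p-2)\beta}{2(p-q\beta)}$ is $1/\theta$: the paper's proof takes $s=\frac{(p-2)\beta}{2(p-q\beta)}$, although Lemma \ref{lem:interpolation} with $s_1=1,s_2=0$ forces $s=\theta$. Second, the lower bound $\frac pq2(\frac qp-\frac2p+2)^{-1}$ should read $\frac pq2(\frac pq-\frac2q+2)^{-1}=\frac{2p}{p+2q-2}$; as printed, the window for $\beta$ is empty when $q\le2$.

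Since the lemma only asserts the existence of $\beta$ and $\varepsilon$, taking $\beta$ slightly below $\frac pq$ and $0<\varepsilon\le\theta$ suffices. This also settles your concern about $q=p-1$.

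On the torus one can in fact use any $\theta\in[\varepsilon,1]$, including your value. To do so, interpolate at smoothness $\varepsilon$ into $W^{\varepsilon,r_\varepsilon}$ with $\frac1{r_\varepsilon}=\frac\varepsilon2+\frac{1-\varepsilon}{p}\le\frac1r$, then use $L^{r_\varepsilon}\subset L^r$ and $H^1\hookrightarrow L^p$. That is a different argument from the one you gave, though.
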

\begin{proof}
  In the first step we want to prove that for all $q\in\N$ the following statement $P(q)$ holds: For all $p>q$ and all $\beta$ with $1<\beta<\frac{p}{q}$, we have for all $\gamma\in\{1,2\}^q$
  \begin{align}\label{eq:leibnitz}
      \Vert (1+D^2)^\frac{\varepsilon}{2} \partial_{\gamma_1} Z_T(u)...\partial_{\gamma_q}Z_T(u)\Vert_{L^\beta}\lesssim \Vert (1+D^2)^\frac{\varepsilon}{2} (\nabla Z_T(u))\Vert_{L^\frac{p\beta}{p-(q-1)\beta}}\Vert \nabla Z_T(u)\Vert_{L^p}^{q-1}.
  \end{align}
  For $q=1$ the statement $P(1)$  is true since for $\gamma\in\{1,2\}$
  \begin{align*}
      \Vert(1+D^2)^\frac{\varepsilon}{2} \partial_{\gamma} Z_T(u)\Vert_{L^\beta}&=\Vert (1+D^2)^\frac{\varepsilon}{2} \partial_{\gamma} Z_T(u)\Vert_{L^\frac{p\beta}{p-(1-1)\beta}}\Vert \partial_{\gamma}Z_T(u)\Vert_{L^p}^{1-1}\\
      &\leq \Vert (1+D^2)^\frac{\varepsilon}{2} (\nabla Z_T(u))\Vert_{L^\frac{p\beta}{p-(1-1)\beta}}\Vert (\nabla Z_T(u))\Vert_{L^p}^{1-1}.
  \end{align*}
  Next, we show the implication $P(q)\implies P(q+1)$. \\
  Let $1<\alpha<\frac{p}{q+1}$ and $\gamma\in\{1,2\}^{q+1}$ be arbitrary. We apply Lemma \ref{lem:leibnitz} with the choices $p_0=\alpha$, $p_1=\frac{p\alpha}{p-q\alpha}$, $p_2=\frac{p}{q}$, $p_1'=\frac{p\alpha}{p-\alpha}$, $p_2'=p$, $f=\partial_{\gamma_{q+1}}Z_T(u)$ and $g=\partial_{\gamma_1} Z_T(u)...\partial_{\gamma_{q}}Z_T(u)$ and Hölders inequality to obtain
  \begin{align*}
      &\Vert (1+D^2)^\frac{\varepsilon}{2} \partial_{\gamma_1} Z_T(u)...\partial_{\gamma_{q+1}}Z_T(u)\Vert_{L^\alpha}\\&\lesssim \Vert (1+D^2)^\frac{\varepsilon}{2} \partial_{\gamma_{q+1}}Z_T(u)\Vert_{L^\frac{p\alpha}{p-q\alpha}}\Vert \partial_{\gamma_1} Z_T(u)...\partial_{\gamma_{q+1}}Z_T(u)\Vert_{L^\frac{p}{q}}\\&
      +\Vert (1+D^2)^\frac{\varepsilon}{2} \partial_{\gamma_1} Z_T(u)...\partial_{\gamma_{q}}Z_T(u)\Vert_{L^\frac{p\alpha}{p-\alpha}}\Vert \partial_{\gamma_{q+1}}Z_T(u)\Vert_{L^p}\\
      &\lesssim \Vert (1+D^2)^\frac{\varepsilon}{2} (\nabla Z_T(u))\Vert_{L^\frac{p\alpha}{p-q\alpha}}\Vert \nabla Z_T(u)\Vert_{L^p}^q\\&
      +\Vert (1+D^2)^\frac{\varepsilon}{2} \partial_{\gamma_1} Z_T(u)...\partial_{\gamma_{q}}Z_T(u)\Vert_{L^\frac{p\alpha}{p-\alpha}}\Vert \nabla Z_T(u)\Vert_{L^p}
     .
  \end{align*}
Set $\beta=\frac{p\alpha}{p-\alpha}$. Since
\begin{align*}
  1<  \frac{p\alpha}{p-\alpha}=\frac{p}{\frac{p}{\alpha}-1}\leq \frac{p}{q-1}
\end{align*}we can apply $P(q)$  to the last term, which yields
\begin{align*}
    \Vert (1+D^2)^\frac{\varepsilon}{2} \partial_{\gamma_1} Z_T(u)...\partial_{\gamma_{q}} Z_T(u)\Vert_{L^\frac{p\alpha}{p-\alpha}}\lesssim& \Vert (1+D^2)^\frac{\varepsilon}{2} (\nabla Z_T(u))\Vert_{L^\frac{p\beta}{p-(q-1)\beta}}\Vert \nabla Z_T(u)\Vert_{L^p}^{q-1}\\
    =&\Vert (1+D^2)^\frac{\varepsilon}{2} (\nabla Z_T(u))\Vert_{L^\frac{p\alpha}{p-q\alpha}}\Vert \nabla Z_T(u)\Vert_{L^p}^{q-1}
\end{align*}
and hence
\begin{align*}
    \Vert (1+D^2)^\frac{\varepsilon}{2} \partial_{\gamma_1} Z_T(u)...\partial_{\gamma_{q+1}}Z_T(u)\Vert_{L^\alpha}\lesssim \Vert (1+D^2)^\frac{\varepsilon}{2} (\nabla Z_T(u))\Vert_{L^\frac{p\alpha}{p-q\alpha}}\Vert \nabla Z_T(u)\Vert_{L^p}^{q}.
\end{align*}
This completes the first step.\\\\
In a second step we now focus on the case $\max\{1,\frac{p}{q}2(\frac{q}{p}-\frac{2}{p}+2)^{-1}\}<\beta<\frac{p}{q}$, and further estimate \eqref{eq:leibnitz} by applying Lemma \ref{lem:interpolation}  to the first term on the right-hand side with the parameters $s=\frac{(p-2)\beta}{2(p-q\beta)}$, $p_0=\frac{p\beta}{p-(q-1)\beta}$, $s_1=1$, $s_2=0$, $p_1=2$, $p_2=p$ and $\theta=\frac{2(p-q\beta)}{(p-2)\beta}$. Note, that this choice of parameters is valid by the bounds for $\beta$. We obtain for $\gamma\in\{1,2\}^q$
\begin{align*}
    \Vert (1+D^2)^\frac{\varepsilon}{2} \partial_{\gamma_1} Z_T(u)...\partial_{\gamma_q}Z_T(u)\Vert_{L^\beta}&\lesssim \Vert (1+D^2)^\frac{\varepsilon}{2} (\nabla Z_T(u))\Vert_{L^\frac{p\beta}{p-(q-1)\beta}}\Vert \nabla Z_T(u)\Vert_{L^p}^{q-1}\\
    &\lesssim\Vert (1+D^2)^\frac{(p-2)\beta}{4(p-q\beta)} (\nabla Z_T(u))\Vert_{L^\frac{p\beta}{p-(q-1)\beta}}\Vert \nabla Z_T(u)\Vert_{L^p}^{q-1}\\
    &\lesssim\Vert \nabla Z_T(u)\Vert_{H^1}^{\frac{2(p-q\beta)}{(p-2)\beta}} \Vert \nabla Z_T(u)\Vert_{L^p}^{q-\frac{2(p-q\beta)}{(p-2)\beta}}.
\end{align*}
Finally, this estimate, Young's inequality and Lemma \ref{lem:H^1boundZ} yield
\begin{align*}
    &|( \int_0^T...\int_0^{s_{k-1}} d(\partial_{\alpha_1}Y_{s_1})...d(\partial_{\alpha_k}Y_{s_{p-q}}))(\partial_{\gamma_1} Z_T(u)...\partial_{\gamma_q}Z_T(u))|\\&\lesssim \Vert (\int_0^T...\int_0^{s_{k-1}} d(\partial_{\alpha_1}Y_{s_1})...d(\partial_{\alpha_k}Y_{s_{p-q}}))\Vert_{W^{-\varepsilon,\frac{\beta}{\beta-1}}}\Vert\partial_{\gamma_1} Z_T(u)...\partial_{\gamma_q}Z_T(u)\Vert_{W^{\varepsilon,\beta}}\\
    &\lesssim \Vert ( \int_0^T...\int_0^{s_{k-1}} d(\partial_{\alpha_1}Y_{s_1})...d(\partial_{\alpha_k}Y_{s_{p-q}}))\Vert_{W^{-\varepsilon,\frac{\beta}{\beta-1}}}\Vert \nabla Z_T(u)\Vert_{H^1}^{\frac{2(p-q\beta)}{(p-2)\beta}} \Vert \nabla Z_T(u)\Vert_{L^p}^{q-\frac{2(p-q\beta)}{(p-2)\beta}}\\
    &\lesssim \Vert (\int_0^T...\int_0^{s_{k-1}} d(\partial_{\alpha_1}Y_{s_1})...d(\partial_{\alpha_k}Y_{s_{p-q}}))\Vert_{W^{-\varepsilon,\frac{\beta}{\beta-1}}}
        \Vert u\Vert_\mathcal{H}^{\frac{2(p-q\beta)}{(p-2)\beta}}\Vert \nabla Z_T(u)\Vert_{L^p}^{q-\frac{2(p-q\beta)}{(p-2)\beta}}\\
        &\leq C(\delta)  \Vert ( \int_0^T...\int_0^{s_{k-1}} d(\partial_{\alpha_1}Y_{s_1})...d(\partial_{\alpha_k}Y_{s_{p-q}}))\Vert_{W^{-\varepsilon,\frac{\beta}{\beta-1}}}^\frac{\beta}{\beta-1}+\delta\left(\Vert \nabla Z_T(u)\Vert_{L^p}^p+\frac{1}{2}\Vert u\Vert_\mathcal{H}^2\right).
\end{align*}
Taking the expectation together with Lemma \ref{WickRegularity} completes the proof.
\end{proof}
\begin{rmk}
    The martingales defined in Lemma \ref{Wick-Powers} are iterated integrals and therefore are of the form appearing in the first factor of the estimate.
\end{rmk}

For the convergence of $\Vert \nabla Z_T\Vert_{L^p}^p$ we need the following compactness result for $\nabla Z$.

\begin{lem}\label{compactZ}
    For $1\leq p<\infty$ and $\tilde{\varepsilon}<\frac{2}{p}$, the map $\nabla Z:\mathcal{H}\to C([0,\infty];W^{\tilde{\varepsilon},p}(\mathbb{T}^2;\R^2))$ is compact.
\end{lem}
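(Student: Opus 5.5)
Throughout we use the map $\nabla Z_t(u)=\int_0^t \frac{\sigma_s(D)\nabla}{(|D|^2+|D|^4)^{1/2}}u_s\,ds$, viewed as a function of $t\in[0,\infty]$. The plan is to prove compactness by an Arzelà--Ascoli argument in $C([0,\infty];W^{\tilde\varepsilon,p})$, combined with a compact Sobolev embedding in space. Since $\mathcal H$ is a Hilbert space, it suffices to show two things. First, $\nabla Z$ maps bounded sets of $\mathcal H$ into relatively compact sets. Second, $\nabla Z$ is continuous from $\mathcal H$ into $C([0,\infty];W^{\tilde\varepsilon,p})$. Because $\nabla Z$ is linear, the second point amounts to boundedness.

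\textbf{Uniform spatial regularity.} Fix a bounded set $B\subset\mathcal H$. The first step is a uniform bound $\sup_{t\in[0,\infty]}\|\nabla Z_t(u)\|_{H^1}\lesssim\|u\|_{\mathcal H}$ for $u\in B$. This is the estimate from Section~\ref{sec:ZTu} (Lemma~\ref{lem:H^1boundZ}); its proof is uniform in $t$, since $\sigma_s(n)$ is supported where $|n|\sim s$ and $\int_0^\infty\sigma_s^2(n)\,ds=1$.

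In two dimensions, $H^1\hookrightarrow W^{s,p}$ whenever $s<1$ and $1-\tfrac{2}{2}\ge s-\tfrac2p$, that is, for $s\le \tfrac2p$. For $\tilde\varepsilon<\tfrac2p$ we pick $s$ with $\tilde\varepsilon<s<\min\{1,\tfrac2p\}$. (If $p<2$ we simply use $L^2\subset L^p$ on the torus.) Then the embedding $H^1\hookrightarrow W^{\tilde\varepsilon,p}$ factors through $W^{s,p}$ and is compact, by Rellich on $\T^2$ with a strict gain in regularity. Consequently, for each fixed $t$ the set $\{\nabla Z_t(u):u\in B\}$ is relatively compact in $W^{\tilde\varepsilon,p}$.

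\textbf{Equicontinuity in time, including at $t=\infty$.} For $t<t'$ we write
\begin{align*}
\nabla Z_{t'}(u)-\nabla Z_t(u)=\int_t^{t'}\frac{\sigma_s(D)\nabla}{(|D|^2+|D|^4)^{1/2}}u_s\,ds .
\end{align*}
The $n$-th Fourier mode of this difference has squared modulus bounded by $\frac{|n|^2}{|n|^2+|n|^4}\bigl(\int_t^{t'}\sigma_s^2(n)\,ds\bigr)\int_t^{t'}|\hat u_s(n)|^2ds$, by Cauchy--Schwarz. Since $\sigma_s(n)\neq0$ only for $s/2\lesssim|n|\lesssim s$, and $\sigma_s^2\lesssim 1/s$, we get
\begin{align*}
\|\nabla Z_{t'}(u)-\nabla Z_t(u)\|_{H^1}^2\lesssim \min\Bigl\{1,\log\tfrac{t'}{t}\Bigr\}\,\|u\|_{\mathcal H}^2 .
\end{align*}
This gives equicontinuity on compact subsets of $(0,\infty)$ after the change of variables $\tau=\log t$.

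For equicontinuity near $t=\infty$ we have to handle the fact that only $\|u\|_{\mathcal H}$ is bounded. Here we interpolate. For modes with $|n|\gtrsim t$ we gain decay from the embedding slack $s-\tilde\varepsilon$:
\begin{align*}
\|\nabla Z_\infty(u)-\nabla Z_t(u)\|_{W^{\tilde\varepsilon,p}}\lesssim \|P_{\gtrsim t}\nabla Z_\infty(u)\|_{H^{1-(s-\tilde\varepsilon)}}\lesssim t^{-(s-\tilde\varepsilon)}\|u\|_{\mathcal H},
\end{align*}
where $P_{\gtrsim t}$ denotes the projection onto modes $|n|\gtrsim t$. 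This tends to $0$ uniformly on $B$. Near $t=0$ there is nothing to check, since $Z_t=0$ for small $t$: $\zeta_t$ vanishes once $t<1$, because $(1+|n|^2)^{1/2}\ge1$.

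\textbf{Conclusion and main obstacle.} Arzelà--Ascoli on the compact interval $[0,\infty]$ then yields relative compactness of $\nabla Z(B)$. The bound above also shows that $\nabla Z$ is a bounded linear map, hence continuous, so $\nabla Z$ is compact.

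The main obstacle is the uniformity at $t=\infty$. A weak-to-strong argument is unavailable there, and we must use the spectral localisation of $\sigma_t$ together with the strict inequality $\tilde\varepsilon<\tfrac2p$ to extract a rate $t^{-(s-\tilde\varepsilon)}$. If $p$ is large, so that $\tfrac2p$ is small, care is needed to check that the gap $s-\tilde\varepsilon>0$ is indeed available. The Sobolev embedding $H^{1-(s-\tilde\varepsilon)}\hookrightarrow W^{\tilde\varepsilon,p}$ requires $1-(s-\tilde\varepsilon)-1\ge\tilde\varepsilon-\tfrac2p$, that is, $s\le\tfrac2p$, which matches our choice of $s$.
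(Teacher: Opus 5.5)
Your proof is correct and is essentially the paper's argument: Arzel\`a--Ascoli on $[0,\infty]$, pointwise compactness from a Rellich embedding, and uniform equicontinuity at $t=\infty$ obtained by giving up a sliver of regularity, using that $\sigma_s$ is supported on $|n|\sim s$. The only difference is packaging: the paper puts finite times and the tail into one bound, $\|\nabla Z_{t_1}(u)-\nabla Z_{t_2}(u)\|_{W^{\varepsilon,p}}\lesssim\bigl(\int_{t_1}^{t_2}(1+s^2)^{-\frac{1+2\kappa}{2}}\,ds\bigr)^{1/2}\|u\|_{\mathcal H}$, whereas you combine a loss-free $H^1$ modulus with a separate tail estimate. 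In that tail estimate, $P_{\gtrsim t}\nabla Z_\infty(u)$ should be replaced by $\nabla Z_\infty(\mathbbm{1}_{[t,\infty)}u)$, which is supported on $|n|\gtrsim t$ and has $H^1$-norm at most $\|u\|_{\mathcal H}$; the inequality as you wrote it can fail.

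Also, exactly as in the paper, your choice of $s$ tacitly requires $\tilde\varepsilon<\min\{1,\frac2p\}$. For $1\le p<2$ this restriction cannot be dropped: $\nabla Z_\infty$ maps the unit ball of $\mathcal H$ onto essentially the unit ball of gradient fields in $H^1$, which is not relatively compact in $W^{1,p}$.
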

\begin{proof} To show that the image of a bounded set in $\mathcal{H}$ is relatively compact in \\$C([0,\infty];W^{\tilde{\varepsilon},p}(\mathbb{T}^2;\R^2))$ we use the Arzelà-Ascoli theorem, which says that a set $F\in C(X,Y)$ is relatively compact if and only if $F$ is equicontinuous and pointwise relatively compact. Let $\tilde{\varepsilon}<\varepsilon<\frac{2}{p}$. With the Sobolev embedding $W^{1-\kappa,2}\hookrightarrow W^{\varepsilon,p}$  for $0<\kappa<\frac{2}{p}-\varepsilon$ and the Mihlin-Hörmander multiplier theorem (using the explicit form of $\sigma_t$ in the definition of $J_t$, see \eqref{eq:Jt}), we get
\begin{align*}
 \Vert \nabla Z_{t_1}(u)-\nabla Z_{t_2}(u)\Vert_{W^{\varepsilon,p}}=&
    \Vert \int_{t_1}^{t_2}\nabla J_su_s\ ds\Vert_{W^{\varepsilon, p}}\leq  \int_{t_1}^{t_2}\Vert \nabla J_su_s\Vert_{W^{\varepsilon, p}}\ ds\\
\lesssim&\int_{t_1}^{t_2}\Vert \nabla J_su_s\Vert_{W^{1-\kappa,2}}\ ds
\lesssim\int_{t_1}^{t_2}\Vert u_s\Vert_{L^2}\ \frac{ds}{(1+s^2)^\frac{1+2\kappa}{4}}\\
    \leq& \left(\int_{t_1}^{t_2}\Vert u_s\Vert^2_{L^2}\ ds\right)^\frac{1}{2}\left( \int_{t_1}^{t_2} \frac{ds}{(1+s^2)^\frac{1+2\kappa}{2}}\right)^\frac{1}{2}\\
    =&\left( \int_{t_1}^{t_2} \frac{ds}{(1+s^2)^\frac{1+2\kappa}{2}}\right)^\frac{1}{2}\Vert u\Vert_\mathcal{H}.
\end{align*}
Hence, equicontinuity follows with
\begin{align*}
    0\leq\lim_{t_1\to t_2}\int_{t_1}^{t_2} \frac{ds}{(1+s^2)^\frac{1+2\kappa}{2}}\leq |t_2-t_1|\stackrel{t_1\to t_2}{\to}0.
\end{align*}
Furthermore, since 
\begin{align*}
    \int_{0}^{\infty} \frac{ds}{(1+s^2)^\frac{1+2\kappa}{2}}=:C<\infty
\end{align*}
we have
\begin{align*}
    \Vert Z_{t_1}(u)\Vert_{W^{\varepsilon,p}}\leq C \Vert u\Vert_\mathcal{H}.
\end{align*}
By Rellich-Kondrachov theorem, the embedding $W^{\varepsilon,p}\stackrel{c}{\hookrightarrow}W^{\tilde{\varepsilon},p}$ is compact, and hence, we obtain the required 
pointwise relative compactness of images of bounded sets under $\nabla Z$.
\end{proof}

\subsection{Three Dimensions}
While so far we focussed on the two-dimensional case, we point out that the strategy can also be extended to three dimensions. We comment on  the differences here, and note that in \cite{Barashkov_2020} the computations for the three-dimensional setting are presented in great detail for the $\Phi_4^3$-model.\\
In three dimensions Wick-renormalization is not sufficient since the Wick-powers are less regular, see explicit computations in Appendix \ref{Wick}. So we need an additional renormalization. Similarly to the processes in Lemma \ref{Wick-Powers} the notation $[\![\nabla Y_T^i]\!]$ describes the martingale part of the $i$-th power of $\nabla Y$. In the following, we use the notation $\circ$ and $\prec$ as introduced in Definition \ref{def:paraproducts}, and $Z^b_T(u)$ is a regularized version of $Z_T(u)$, for details see \cite[Chapter 4]{Barashkov_2020}.

\begin{prop}\label{prop:renormalization3D}
    
Choosing 
\begin{align*}
     &\gamma_T:\R^3\times\R^3\to\R^3,\quad \gamma_T(z)(p)=&&\sum_{i,j,k,l\in\{1,2,3\}}(576\sum_{q_1,q_2\in\Z^3\setminus\{0\}}\\& &&(q_1^i q_2^j)\int_0^T\int_0^T\frac{\sigma^2_{r_1}(q_1)}{(|q_1|^2+|q_1|^4) } \frac{\sigma^2_{r_2}(q_2)}{(|q_2|^2+|q_2|^4) }\\
     & &&(-q^k_1-q^k_2)(z(q_1\cdot q_2)+q_1(z\cdot q_2)+q_2(z\cdot q_1))\\
     & &&\int_{r_1\vee r_2}\frac{\sigma^2_{u}(-q_1-q_2)}{(|-q_1-q_2|^2+|-q_1-q_2|^4) }\ du\ dr_1\ dr_2)p^l,\\
     \end{align*}\vspace{-1.5cm}
     \begin{align*}
     \delta_T=&\mathbb{E}\bigg[-32\int_0^T\int(\nabla\cdot J_t  [\![\nabla Y_t^3]\!])^3\ dt+96\int  [\![\nabla Y_T^2]\!](\nabla\int_0^T J_t(\nabla\cdot J_t  [\![\nabla Y_t^3]\!]) ds)^2 \\
     &-8\gamma_T(\nabla Y_T)(\nabla\int_0^T J_t(\nabla\cdot J_t  [\![\nabla Y_t^3]\!])ds)+256\nabla Y_T\cdot (\nabla\int_0^T J_t(\nabla\cdot J_t  [\![\nabla Y_t^3]\!])ds)^3 \bigg]
\end{align*}
yields
\begin{align}\label{eq:FT_3d}
   F_T(u)
    =&\mathbb{E}\bigg[f(Y_T+Z_T)+ 4\int [\![\nabla Y_T^3]\!]\cdot\nabla Z_T(u)+4 \int \nabla Y_T\cdot(\nabla Z_T(u))^3\nonumber\\&+\sum_{i,j,k,l\in\{1,2,3\}}2(\delta_{ik}\delta_{jl}+\delta_{il}\delta_{jk}+\delta_{ij}\delta_{kl})[\![\partial_iY_T\partial_j Y_T]\!]\partial_k Z_T(u) \partial_l Z_T\nonumber\\&-2 \int \gamma_T( \nabla Y_T)(\nabla Z_T(u))+\int  \gamma_T(\nabla Z_T(u))(\nabla Z_T(u))+\delta_T\bigg]\nonumber\\&+\mathbb{E}\bigg[ \int  (\nabla Z_T(u))^4+\frac{1}{2}\Vert u\Vert^2_\mathcal{H}\bigg]\nonumber\\
    =&\mathbb{E}\left[ f(Y_T+Z_T(u))+\Phi_T(u)+\Vert\nabla Z_T(u)\Vert_{L^4}^4+\frac{1}{2}\Vert l^T(u)\Vert_\mathcal{H}^2\right],
\end{align}
where
\begin{align}\label{eq:PhiT3D}
\Phi_T(u)&\coloneq \sum_{i=1}^6\mathcal{Y}^{(i)}_T
\end{align}\vspace{-0.1pt}
    with
\begin{align*}
    \mathcal{Y}^{(1)}_T&\coloneq \frac{1}{2}\int_\Lambda (12\sum_{i,j,k,l\in\{1,2\}}2(\delta_{ik}\delta_{jl}+\delta_{il}\delta_{jk}+\delta_{ij}\delta_{kl})(-([\![\partial_iY_T\partial_j Y_T]\!]\succ\partial_k Z_T(w)) \partial_l Z_T(w)\\&-([\![\partial_iY_T\partial_j Y_T]\!]\circ\partial_k Z_T(w)) \partial_l Z_T(w)+([\![\partial_iY_T\partial_j Y_T]\!]\prec\partial_k Z_T(w))\partial_l Z_T(w))\\&+ 4([\![\partial_iY_T\partial_j Y_T]\!]\prec(\partial_k\int_0^TJ_t(\nabla\cdot J_t [\![\nabla Y_T^3]\!]) dt)) \partial_l Z_T(w)),\\
\mathcal{Y}^{(2)}_T&\coloneq \int_\Lambda 12\sum_{i,j,k,l\in\{1,2\}}2(\delta_{ik}\delta_{jl}+\delta_{il}\delta_{jk}+\delta_{ij}\delta_{kl})(([\![\partial_iY_T\partial_j Y_T]\!]\succ\partial_k (Z_T(u)-Z_T^b(u))) \partial_l Z_T(w)),\\
\mathcal{Y}^{(3)}_T&\coloneq \int_0^T \int_\Lambda 12\sum_{i,j,k,l\in\{1,2\}}2(\delta_{ik}\delta_{jl}+\delta_{il}\delta_{jk}+\delta_{ij}\delta_{kl})(([\![\partial_iY_t\partial_j Y_T]\!]\succ\partial_k Z_t^b(u)) \partial_l Z_t(w)),\\
\mathcal{Y}^{(4)}_T&\coloneq 4  \int_\Lambda  \nabla Y_T\cdot(\nabla Z_T(w))^3+4 \int_\Lambda  \nabla\ Y_T \cdot(4\nabla\int_0^TJ_t(\nabla\cdot J_t [\![\nabla Y_T^3]\!]) dt) (\nabla Z_T(w))^2\\
&+ 8 \int_\Lambda ( \nabla\ Y_T \cdot\nabla Z_T(w))((4\nabla\int_0^TJ_t(\nabla\cdot J_t [\![\nabla Y_T^3]\!]) dt)\cdot \nabla Z_T(w))\\&+ 4 \int_\Lambda  (4\nabla\int_0^TJ_t(\nabla\cdot J_t [\![\nabla Y_T^3]\!]) dt)^2\nabla Y_T\cdot \nabla Z_T(w)\\
&+8 \int_\Lambda(  (4\nabla\int_0^TJ_t(\nabla\cdot J_t [\![\nabla Y_T^3]\!]) dt)\cdot\nabla Y_T)((4\nabla\int_0^TJ_t(\nabla\cdot J_t [\![\nabla Y_T^3]\!]) dt)\cdot \nabla Z_T(w)),\\
\mathcal{Y}^{(5)}_T&\coloneq - \int_\Lambda \gamma_T(\nabla Z_T^b(u))(\nabla Z_T(u)-\nabla Z_T^b(u))- \int_\Lambda \gamma_T(\nabla Z_T(u)-\nabla Z_T^b(u))(\nabla Z_T^b(u))\\&
- \int_\Lambda \gamma_T(\nabla Z_T(u)-\nabla Z_T^b(u))(\nabla Z_T(u)-\nabla Z_T^b(u))-\int_0^T \int_\Lambda\gamma_t(\nabla Z_t^b(u))(\nabla\dot{Z}_t^b(u))\ dt\\&-\int_0^T \int_\Lambda\gamma_t(\nabla \dot{Z}_t^b(u))(\nabla Z_t^b(u))\ dt,\\
\mathcal{Y}^{(6)}_T&\coloneq \int_\Lambda (48\sum_{i,j,k,l\in\{1,2\}}2(\delta_{ik}\delta_{jl}+\delta_{il}\delta_{jk}+\delta_{ij}\delta_{kl})([\![\partial_iY_T\partial_j Y_T]\!]\circ(\partial_k\int_0^TJ_t(\nabla\cdot J_t [\![\nabla Y_T^3]\!]) dt)) \partial_l Z_T(w)))\\&-\int_\Lambda 2\gamma_T(\nabla Y)(\nabla Z_T(w))\\&
-\frac{1}{2}\int_0^T \int_\Lambda \sum_{\substack{i,j,k,l, \tilde{i},\tilde{j},\\\tilde{k},\tilde{l} \in\{1,2,3\}}}(\delta_{ik}\delta_{jl}+\delta_{il}\delta_{jk}+\delta_{ij}\delta_{kl})(\delta_{\tilde{i}\tilde{k}}\delta_{\tilde{j}\tilde{l}}+\delta_{\tilde{i}\tilde{l}}\delta_{\tilde{j}\tilde{k}}+\delta_{\tilde{i}\tilde{j}}\delta_{\tilde{k}\tilde{l}})\\& \bigg( \partial_l J_t ( [\![\partial_iY_t\partial_j Y_T]\!])\circ \partial_{\tilde{l}}J_t(  [\![\partial_{\tilde{i}}Y_t\partial_{\tilde{j}} Y_T]\!]) \bigg)\partial_k Z^b_T(u)\partial_{\tilde{k}} Z_T^b(u)+\dot{\gamma}_t(\nabla Z_t^b(u))(\nabla Z_t^b(u))\ dt
\\&
-\frac{1}{2}\int_0^T\int_\Lambda  (J_t(12\sum_{i,j,k,l\in\{1,2\}}2(\delta_{ik}\delta_{jl}+\delta_{il}\delta_{jk}+\delta_{ij}\delta_{kl})\partial_l (([\![\partial_iY_t\partial_j Y_T]\!]\succ\partial_k Z_t^b(u)))^2\\&-\sum_{\substack{i,j,k,l, \tilde{i},\tilde{j},\\\tilde{k},\tilde{l} \in\{1,2,3\}}}(\delta_{ik}\delta_{jl}+\delta_{il}\delta_{jk}+\delta_{ij}\delta_{kl})(\delta_{\tilde{i}\tilde{k}}\delta_{\tilde{j}\tilde{l}}+\delta_{\tilde{i}\tilde{l}}\delta_{\tilde{j}\tilde{k}}+\delta_{\tilde{i}\tilde{j}}\delta_{\tilde{k}\tilde{l}})\\& \bigg( \partial_l J_t ( [\![\partial_iY_t\partial_j Y_T]\!])\circ \partial_{\tilde{l}}J_t(  [\![\partial_{\tilde{i}}Y_t\partial_{\tilde{j}} Y_T]\!]) \bigg)\partial_k Z^b_T(u)\partial_{\tilde{k}} Z_T^b(u)\ dt,
    \end{align*}
    and 
     \begin{align*}
        l^T_t(u)&\coloneq u_t-4\mathbbm{1}_{t\leq T}\nabla\cdot  J_t[\![\nabla Y_T^3]\!]\\&-\mathbbm{1}_{t\leq T} J_t(12\sum_{i,j,k,l\in\{1,2\}}2(\delta_{ik}\delta_{jl}+\delta_{il}\delta_{jk}+\delta_{ij}\delta_{kl})\partial_l (([\![\partial_iY_t\partial_j Y_T]\!]\succ\partial_k Z_t^b(u)) ,\\
        w^T_t(u)&\coloneq u_t-4\mathbbm{1}_{t\leq T}\nabla\cdot  J_t[\![\nabla Y_T^3]\!].
    \end{align*}
   
\end{prop}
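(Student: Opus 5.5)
The plan is to reproduce, in the vectorial setting, the change-of-drift argument of \cite[Chapter 4]{Barashkov_2020} for $\Phi^4_3$. The proof consists of three algebraic steps. After each step, the terms that are still singular are either absorbed into the quadratic cost or cancelled by $\gamma_T$ and $\delta_T$.

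\textbf{Step 1 (Wick expansion).} Expand $|\nabla Y_T+\nabla Z_T(u)|^4$ exactly as in Theorem \ref{thm:renormalization}, now with indices in $\{1,2,3\}$. Using the martingale property from Lemma \ref{Wick-Powers}, this gives $\mathbb{E}\int[\![\nabla Y_T^4]\!]=0$. We obtain the first expression for $F_T$, with $\gamma_T$ and $\delta_T$ added as counterterms. The terms $-2\int\gamma_T(\nabla Y_T)(\nabla Z_T)+\int\gamma_T(\nabla Z_T)(\nabla Z_T)$ equal the mass renormalization $\mathcal{O}_T(|\nabla\varphi_T|^2)$ evaluated at $\varphi_T=Y_T+Z_T(u)$, up to a constant.

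\textbf{Step 2 (first drift shift).} In $d=3$ the term $4\int[\![\nabla Y_T^3]\!]\cdot\nabla Z_T(u)$ cannot be bounded by $\|u\|_\mathcal{H}^2$, since $[\![\nabla Y_T^3]\!]$ only lies in $\mathscr{C}^{-3/2-\kappa}$. Write $Z_T(u)=\int_0^TJ_tu_t\,dt$ and integrate by parts in $x$. This turns the term into $-4\int_0^T\langle\nabla\cdot J_t[\![\nabla Y_T^3]\!],u_t\rangle\,dt$. Here we must replace $[\![\nabla Y_T^3]\!]$ by $[\![\nabla Y_t^3]\!]$, using the martingale property together with adaptedness of $u$. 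Completing the square in $\frac12\|u\|^2_\mathcal{H}$ then produces $\frac12\|w^T(u)\|^2_\mathcal{H}$. The leftover $-8\int_0^T\|\nabla\cdot J_t[\![\nabla Y^3_t]\!]\|^2dt$ is a deterministic-in-expectation constant that we put into $\delta_T$.

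Next substitute $Z_T(u)=Z_T(w)+4\int_0^TJ_t(\nabla\cdot J_t[\![\nabla Y_t^3]\!])\,dt$ into the cubic, quadratic and $\gamma_T$ terms and into $\|\nabla Z_T\|_{L^4}^4$. This yields the terms collected in $\mathcal{Y}^{(4)}_T$ and the $\circ$ and $\prec$ parts of $\mathcal{Y}^{(1)}_T$ and $\mathcal{Y}^{(6)}_T$. Purely stochastic terms of the form $\int(\cdots)^3$, the quadratic Wick term and the term against $\gamma_T$ have expectations that we move into $\delta_T$; this is exactly how the displayed formula for $\delta_T$ is made up.

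\textbf{Step 3 (paraproduct decomposition and second shift).} For the term $[\![\partial_iY\partial_jY]\!]\,\partial_kZ\,\partial_lZ$, decompose the product into $\prec,\circ,\succ$ (Definition \ref{def:paraproducts}). The $\succ$ part is again too singular. Introduce the regularized $Z^b$ and write this part as a time integral via Itô/product rule. This gives $\mathcal{Y}^{(2)}_T$ (the difference $Z-Z^b$) and $\mathcal{Y}^{(3)}_T$, plus a term that is linear in $u_t$. Complete the square once more, which yields $\frac12\|l^T(u)\|^2_\mathcal{H}$.

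The square of the subtracted drift, $\frac12\int_0^T\|J_t(\cdots\succ\partial_kZ^b_t)\|^2dt$, contains a resonant product $\partial_lJ_t[\![\cdot]\!]\circ\partial_{\tilde l}J_t[\![\cdot]\!]$. Its divergent expectation is, by design, $\dot\gamma_t$, as one checks by an explicit Fourier computation of $\mathbb{E}$ of the resonant product. This is what fixes the triple integral formula for $\gamma_T$, including the factor $576$ and the symmetrized tensor $z(q_1\cdot q_2)+q_1(z\cdot q_2)+q_2(z\cdot q_1)$. Subtracting this expectation gives the last lines of $\mathcal{Y}^{(6)}_T$. The remaining $\gamma$ terms are rewritten with $\int_0^T\frac{d}{dt}[\gamma_t(\nabla Z^b_t)(\nabla Z^b_t)]\,dt$ and the splitting $Z=Z^b+(Z-Z^b)$, which gives $\mathcal{Y}^{(5)}_T$.

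\textbf{Main obstacle.} The hard part is the bookkeeping in Step 3. We must verify that the expectation of the resonant term exactly matches $\dot\gamma_t$, with the vectorial tensor structure and combinatorial constants, so that $\gamma_T$ really cancels it. I would first carry this out in Fourier variables using the covariance \eqref{eq:var} and Wick's theorem for the second chaos. Then I would check that all other terms agree line by line with $\mathcal{Y}^{(1)},\dots,\mathcal{Y}^{(6)}$.
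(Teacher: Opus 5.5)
Your plan follows the same route as the paper, which also only adapts \cite[Lemma 5]{Barashkov_2020} and writes out just the cubic step. The shared steps are the Itô/martingale replacement of $[\![\nabla Y_T^3]\!]$ by $[\![\nabla Y_t^3]\!]$ followed by completing the square to get $w^T$, then the paraproduct splitting with $Z^b$ and a second completed square giving $l^T$, with $\gamma_T=\int_0^T\dot\gamma_t\,dt$ fixed by the resonant expectation (consistent with the $\int_{r_1\vee r_2}$ structure of the displayed formula); your sketch is therefore more explicit than the paper's. One bookkeeping correction: insert the shift $Z_T(u)=Z_T(w)+4\int_0^TJ_t(\nabla\cdot J_t[\![\nabla Y_t^3]\!])\,dt$ only into the cubic term, the Wick-quadratic term and the cross term $\gamma_T(\nabla Y_T)(\nabla Z_T)$, not into $\Vert\nabla Z_T\Vert_{L^4}^4$ or $\gamma_T(\nabla Z_T)(\nabla Z_T)$, because the target keeps these in terms of $u$ (they become $\Vert\nabla Z_T(u)\Vert_{L^4}^4$ and $\mathcal{Y}^{(5)}_T$) and $\delta_T$ contains no quartic term in the shift.
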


\begin{proof} The decomposition works analogously to \cite[Lemma 5]{Barashkov_2020} for the $\Phi^4_3$ model.  For the readers' convenience, we briefly sketch the main ideas. The idea is to use Itô's formula for the products $[\![\nabla Y_T^2]\!](\nabla Z_T(u))^2$ and $[\![\nabla Y_T^3]\!]\cdot\nabla Z_T(u)$ and extracting the divergent parts, i.e. the parts where we have no control over the high frequencies of the singular term $\nabla Y$, by using paraproducts, see Definition \ref{def:paraproducts} and Remark \ref{rmk:paraproducts}.  \\
As in the two dimensional case, see Theorem \ref{thm:renormalization}, the term where $\nabla Y$ has quadratic influence appears as a sum of several terms with mixed partial derivatives. We renormalize each term separately as in the scalar case for the $\Phi^4_3$ model. \\
  In addition, due to the vectorial gradient structure one has to integrate by parts to match the regularity and dimensions. This leads to derivatives that are sometimes of the form of a divergence which would not appear in the scalar case.\\
To illustrate this we perform exemplarily the computation for $[\![\nabla Y_T^3]\!]\cdot\nabla Z_T(u)$.\\
Note that $\nabla Z(u)$ is of bounded variation. Itô's formula yields
\begin{align*}
    \int_{\mathbb{T}^3} [\![\nabla Y_T^3]\!]\cdot\nabla Z_T(u)=\int_0^T\int_{\mathbb{T}^3} [\![\nabla Y_t^3]\!]\cdot\nabla \dot{Z}_t(u)\ dt+ \int_0^T\int_{\mathbb{T}^3} \nabla Z_t(u)\cdot d([\![\nabla Y_t^3]\!]).
\end{align*}
The second term on the right hand-side is a martingale and vanishes in expectation, so we continue with the first term. We have
\begin{align*}
    &4\int_0^T\int_{\mathbb{T}^3} [\![\nabla Y_t^3]\!]\cdot\nabla \dot{Z}_t(u)\ dt
    =4\int_0^T\int_{\mathbb{T}^3} [\![\nabla Y_t^3]\!]\cdot\nabla J_tu\ dt\\
    &=4\int_0^T\int_{\mathbb{T}^3} [\![\nabla Y_t^3]\!]\cdot\nabla J_tu\ dt\\
    &+\frac{16}{2}\int_0^T\int_{\mathbb{T}^3} (\nabla\cdot J_t[\![\nabla Y_t^3]\!])^2\ dt-\frac{16}{2}\int_0^T\int_{\mathbb{T}^3} (\nabla\cdot J_t[\![\nabla Y_t^3]\!])^2\ dt\\
    &+\frac{1}{2}\int_0^\infty\int_{\mathbb{T}^3} (u_t)^2\ dt-\frac{1}{2}\int_0^\infty\int_{\mathbb{T}^3} (u_t)^2\ dt\\
    &=\frac{1}{2}\int_0^\infty\int_{\mathbb{T}^3} (u_t-4\mathbbm{1}_{t\leq T}\nabla\cdot J_t[\![\nabla Y_t^3]\!])^2\ dt\\&-\frac{16}{2}\int_0^T\int_{\mathbb{T}^3} (\nabla\cdot J_t[\![\nabla Y_t^3]\!])^2\ dt-\frac{1}{2}\int_0^\infty\int_{\mathbb{T}^3} (u_t)^2\ dt.
\end{align*}
Consider the last expression. The last term cancels with $\frac{1}{2}\Vert u\Vert_\mathcal{H}^2$, and the second to last term does not depend on $u$ anymore and hence has no influence on the infimizing process. Finally, the first term is part of the perturbed drift $l_T(u)$ in \eqref{eq:FT_3d}.
\end{proof}

In \eqref{eq:FT_3d}, the term $l^T(u)$ is now a perturbation  of $u$ containing terms depending on $Y_T$, and $\Phi_T$ collects terms with bounds depending on $\Vert\nabla Z_T(u)\Vert_{L^4}^4$, $\frac{1}{2}\Vert l_T(u)\Vert^2_\mathcal{H}$ and hence has a good convergence behavior in the sense of $\Gamma$-convergence, see \cite[Lemma 8]{Barashkov_2020}.  
\\ 
 
To prove the equicoercivity and $\Gamma$-convergence one proceeds along the lines of \cite[Chapter 6]{Barashkov_2020}.

\quad\\
\textbf{Acknowledgement}\\
We acknowledge funding by the Deutsche Forschungsgemeinschaft (DFG, German Research Foundation) – CRC/TRR 388 "Rough Analysis, Stochastic Dynamics and Related Fields“ – Project ID 516748464",
and CT acknowledges funding by the Deutsche Forschungsgemeinschaft (DFG, German Research Foundation) under 
Germany´s Excellence Strategy – The Berlin Mathematics
Research Center MATH+ (EXC-2046/1, EXC-2046/2, project ID: 390685689). The authors thank  Felix Otto and Nicolas Perkowski for helpful discussions and insights.
\newpage

\bibliography{bibliography}
\bibliographystyle{plain}

\section*{Appendix}
\appendix

\section{Besov Spaces}\label{sec:besov}
In order to characterize the support of the Gaussian measure we introduce Besov spaces.
\begin{defi}[Besov Spaces]\label{def:besov}
Let $\rho,\chi\in C^\infty(\R^d)$ such that
\begin{itemize}
   \item $ \supp \rho \subset B_{2R}(0)\setminus B_R(0) \text{ and } \supp \chi\subset B_R(0)$
    \item$\rho\leq 1,\chi\geq 0 \text{ and } \chi(\xi)+ \sum_{j\geq 0}\rho(2^{-j}\xi)=1 \quad \text{for all }\xi\in\R^d  $
\end{itemize}
and denote $\rho_j=\rho(2^{-j}\cdot)$ for $j\geq 0$ and $\rho_{-1}=\chi$ and $\Delta_jf\coloneq \mathcal{F}^{-1}\rho_j\mathcal{F}f$. \\
 We define for $\alpha\in\R$  
 \begin{align*}
     B_{p,q}^\alpha\coloneq \{f\in\mathcal{S}'(\mathbb{T}^d):\Vert f\Vert_{B^\alpha_{p,q}}<\infty\}
 \end{align*}
 with
 \begin{align*}
     \Vert f\Vert_{B^\alpha_{p,q}}\coloneq \Vert (2^{\alpha j}\Vert \Delta_jf\Vert_{L^p})_{j\geq-1}\Vert_{l^q}.
 \end{align*}
 For $p=q=2$ we have the fractional Sobolev spaces $H^\alpha\coloneq B_{p,q}^\alpha$.\\
 Moreover, we define
 \begin{align*}
     \mathscr{C}^\alpha\coloneq \overline{C(\mathbb{T}^d)}^{B_{\infty,\infty}^\alpha}.
 \end{align*}

\end{defi}
\begin{rmk}
    For $\alpha\in\N$ the fractional Sobolev spaces agree with the classical Sobolev spaces.
\end{rmk}
\begin{defi}\label{def:paraproducts}
    Let $f,g\in\mathcal{S}'(\mathbb{T}^d)$. We define the paraproducts and resonant product as 
    \begin{align*}
        f\succ g=g\prec f&\coloneq \sum_{j<i-1}\Delta_if\Delta_jg,\\
        f\circ g&\coloneq \sum_{|i-j|\leq 1}\Delta_if\Delta_jg.
    \end{align*}
\end{defi}
\begin{rmk}\label{rmk:paraproducts}
    The regularity of $f\succ g$ depends highly on the regularity of $f$. In particular, if $f$ is singular and $g$ is not, the paraproduct $f\succ g$ will adopt the regularity of $f$ whereas $g\succ f$ and $f\circ g$ behave better. For more details see \cite[Lemma 2.1]{GUBINELLI_IMKELLER_PERKOWSKI_2015}.
\end{rmk}
The following lemma provides some useful embeddings, see \cite{Triebel1983} for more details.
\begin{lem}\label{Besovnormestimates}
    Let $\delta>0$. We have for any $s\in\R$ and $p,q_1,q_2\in[1,\infty]$, $q_1<q_2$
    \begin{align}\label{besovembedding1}
        \Vert f\Vert_{B_{p,q_2}^s}\lesssim \Vert f\Vert_{B_{p,q_1}^s}\lesssim \Vert f\Vert_{B_{p,\infty}^{s+\delta}}\quad \forall f\in\mathcal{S}'(\mathbb{T}^d).
    \end{align}
    If we denote the fractional Sobolev spaces by $W^{s,p}$ then for any $q\in[1,\infty]$
    \begin{align}\label{besovembedding2}
        \Vert f\Vert_{B_{p,q}^s}\lesssim \Vert f\Vert_{W^{s+\delta,p}}\lesssim \Vert f\Vert_{B_{p,\infty}^{s+2\delta}}\quad \forall f\in\mathcal{S}'(\mathbb{T}^d),
    \end{align}
    and
    \begin{align}\label{besovembedding4}
        \Vert f\Vert_{W^{s,p}}\lesssim\Vert f\Vert_{B_{p,q}^{s+\delta}}\quad \forall f\in\mathcal{S}'(\mathbb{T}^d).
    \end{align}
    For all $\alpha\in\N_0^d$ and for all $s,t\in\R$, $p_1,p_2,q_1,q_2\in [1,\infty]$, with
    \begin{align*}
        p_2\geq p_1,\quad q_2\geq q_1,\quad t\leq s-d(\frac{1}{p_1}-\frac{1}{p_2})
    \end{align*}
    one has
    \begin{align}\label{besovembedding3}
        \Vert \partial^\alpha f\Vert_{B^{t-|\alpha|}_{p_2,q_2}}\lesssim\Vert f\Vert_{B^s_{p_1,q_1}} \quad \forall f\in\mathcal{S}'(\mathbb{T}^d).
    \end{align}
\end{lem}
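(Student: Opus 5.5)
The statement to prove is Lemma \ref{Besovnormestimates}: a collection of standard embeddings between Besov spaces $B^s_{p,q}$ and fractional Sobolev spaces $W^{s,p}$ on $\T^d$. I would prove each inequality directly from Definition \ref{def:besov}, using the Littlewood--Paley blocks $\Delta_j$ and two tools: Bernstein's inequality on the torus, and the characterization $W^{s,p}=F^s_{p,2}$ (Triebel--Lizorkin) for $1<p<\infty$. The endpoint cases $p\in\{1,\infty\}$ would be handled through the Bessel-potential definition directly. Where needed I would refer to \cite{Triebel1983}.

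\textbf{The $q$-monotonicity in \eqref{besovembedding1}.} The first inequality is the embedding $\ell^{q_1}\hookrightarrow\ell^{q_2}$ applied to the sequence $(2^{sj}\Vert\Delta_jf\Vert_{L^p})_j$. For the second, I would bound
\[
2^{sj}\Vert\Delta_jf\Vert_{L^p}\le 2^{-\delta j}\Vert f\Vert_{B^{s+\delta}_{p,\infty}}.
\]
Since $(2^{-\delta j})_{j\ge-1}\in\ell^{q_1}$ for every $q_1$, summing gives the claim.

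\textbf{The Sobolev comparisons \eqref{besovembedding2} and \eqref{besovembedding4}.} These reduce to the classical sandwich
\[
B^{s}_{p,\min(p,2)}\hookrightarrow F^s_{p,2}=W^{s,p}\hookrightarrow B^s_{p,\max(p,2)}.
\]
Combined with \eqref{besovembedding1}, the $\delta$-loss absorbs any change of the third index $q$. Concretely:
\begin{itemize}
\item $\Vert f\Vert_{B^s_{p,q}}\lesssim\Vert f\Vert_{B^{s+\delta}_{p,\infty}}\lesssim\Vert f\Vert_{W^{s+\delta,p}}$;
\item $\Vert f\Vert_{W^{s,p}}\lesssim\Vert f\Vert_{B^s_{p,1}}\lesssim\Vert f\Vert_{B^{s+\delta}_{p,q}}$, which gives both \eqref{besovembedding4} and, with $\delta$ replaced by $2\delta$ and shifted index, the second half of \eqref{besovembedding2}.
\end{itemize}
The estimate $\Vert f\Vert_{W^{s,p}}\lesssim\Vert f\Vert_{B^s_{p,1}}$ I would prove directly, without the $F$-space theory. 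I would write $(1+|D|^2)^{s/2}f=\sum_j(1+|D|^2)^{s/2}\Delta_jf$. On each annulus, the symbol $(1+|\xi|^2)^{s/2}\tilde\rho_j(\xi)$ is $2^{sj}$ times a uniformly Mihlin multiplier, so
\[
\Vert(1+|D|^2)^{s/2}\Delta_jf\Vert_{L^p}\lesssim 2^{sj}\Vert\Delta_jf\Vert_{L^p}.
\]
Summing over $j$ with the triangle inequality gives the bound. This approach also works for $p\in\{1,\infty\}$, because the localized kernels are uniformly $L^1$. Similarly, $\sup_j2^{sj}\Vert\Delta_jf\Vert_{L^p}\lesssim\Vert f\Vert_{W^{s,p}}$ holds since $\Delta_j(1+|D|^2)^{-s/2}$ has $L^1$ kernel of norm $\lesssim2^{-sj}$.

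\textbf{The derivative/integrability shift \eqref{besovembedding3}.} For each block I would use Bernstein's inequality,
\[
\Vert\partial^\alpha\Delta_jf\Vert_{L^{p_2}}\lesssim 2^{j|\alpha|}\,2^{jd(\frac1{p_1}-\frac1{p_2})}\Vert\Delta_jf\Vert_{L^{p_1}},
\]
which follows by writing $\Delta_j=\tilde\Delta_j\Delta_j$ and applying Young's inequality to the rescaled kernel. Multiplying by $2^{(t-|\alpha|)j}$ and using $t\le s-d(\frac1{p_1}-\frac1{p_2})$ gives
\[
2^{(t-|\alpha|)j}\Vert\partial^\alpha\Delta_jf\Vert_{L^{p_2}}\lesssim 2^{sj}\Vert\Delta_jf\Vert_{L^{p_1}}.
\]
Taking $\ell^{q_2}$ norms and using $\ell^{q_1}\subset\ell^{q_2}$ finishes the proof.

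\textbf{Main obstacle.} The only nontrivial ingredient is the uniform Mihlin/kernel bound for the rescaled multipliers on the torus. I would obtain it by transference from $\R^d$, for example by Poisson summation of the $\R^d$ kernels, which preserves $L^1$ bounds. Everything else is bookkeeping.
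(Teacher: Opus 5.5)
Your proposal is correct, but it takes a different route from the paper. The paper gives no argument for this lemma and simply refers to \cite{Triebel1983}, i.e.\ to the general embedding theory for $B^s_{p,q}$ and $F^s_{p,q}$. That theory contains sharper, loss-free statements such as $B^s_{p,\min(p,2)}\hookrightarrow F^s_{p,2}=W^{s,p}\hookrightarrow B^s_{p,\max(p,2)}$ for $1<p<\infty$.

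You instead give a self-contained Littlewood--Paley proof on $\T^d$, and you use the $\delta$-loss that the lemma permits to avoid Triebel--Lizorkin spaces altogether. Concretely, \eqref{besovembedding2} and \eqref{besovembedding4} reduce, via \eqref{besovembedding1}, to two elementary estimates:
\[
\sup_j 2^{sj}\Vert\Delta_j f\Vert_{L^p}\lesssim\Vert f\Vert_{W^{s,p}}, \qquad \Vert f\Vert_{W^{s,p}}\lesssim\Vert f\Vert_{B^s_{p,1}}.
\]
Each follows from uniform $L^1$ bounds on the kernels of rescaled smooth compactly supported symbols. Then \eqref{besovembedding3} is blockwise Bernstein. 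So the sandwich you quote at the start is never actually used.

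What your route buys is validity for all $p\in[1,\infty]$ with the Bessel-potential norm $\Vert(1+D^2)^{s/2}\cdot\Vert_{L^p}$. This is exactly how the paper uses $W^{s,p}$ in Lemma \ref{lem:leibnitz} and in the proof of Lemma \ref{lem:generalbounds}. The identification $W^{s,p}=F^s_{p,2}$ behind the cited results, by contrast, is only available for $1<p<\infty$. The citation, in turn, gives the optimal statements without the $\delta$-loss.

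One small correction to your ``main obstacle''. Bernstein with $p_1<p_2$ needs $L^r$ bounds with $r>1$ on the periodized kernels. These do not follow from Poisson summation preserving $L^1$ norms alone. They follow from the decay $|K_j(x)|\lesssim_N 2^{j(d+|\alpha|)}(1+2^j|x|)^{-N}$, which makes the non-zero periodic translates negligible on the fundamental domain.
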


\begin{lem}\label{lem:leibnitz}
    Let $s\geq0$ and $p_0,p_1,p_2,p_1',p_2'>1$ such that $\frac{1}{p_1}+\frac{1}{p_2}=\frac{1}{p_0}=\frac{1}{p_1'}+\frac{1}{p_2'}$. Then there exists a constant $C$ such that
    \begin{align*}
        \Vert (1+D^2)^\frac{s}{2}(fg)\Vert_{L^{p_0}}\leq C\Vert (1+D^2)^\frac{s}{2}f\Vert_{L^{p_1}}\Vert g\Vert_{L^{p_2}}+C\Vert (1+D^2)^\frac{s}{2}g\Vert_{L^{p_1'}}\Vert f\Vert_{L^{p_2'}}
    \end{align*}
    for all functions $f\in W^{s,p_1}\cap L^{p_2'}$ and $g\in W^{s,p_1'}\cap L^{p_2}$.
\end{lem}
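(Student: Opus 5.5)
The statement is the fractional Leibniz (Kato--Ponce) inequality on $\mathbb{T}^d$. I would prove it with Bony's paraproduct decomposition from Definition \ref{def:paraproducts}, writing
\[
fg = f\prec g + f\circ g + f\succ g,
\]
and bounding $\Vert (1+D^2)^{s/2}\cdot\Vert_{L^{p_0}}$ of each piece separately. Throughout I use Littlewood--Paley theory in $L^p$ for $1<p<\infty$ in two forms. First, $\Vert (1+D^2)^{s/2}h\Vert_{L^p}\sim \Vert (\sum_j 2^{2js}|\Delta_j h|^2)^{1/2}\Vert_{L^p}$, the Triebel--Lizorkin characterization of $W^{s,p}=F^s_{p,2}$. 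Second, the Fefferman--Stein vector-valued maximal inequality. Both hold on the torus by transference, or by periodizing the Euclidean arguments.

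\emph{Paraproduct terms.} For $f\succ g=\sum_i \Delta_i f\, S_{i-1}g$ with $S_{i-1}=\sum_{j<i-1}\Delta_j$, each summand has Fourier support in an annulus of size $2^i$. Hence
\[
\Vert (1+D^2)^{s/2}(f\succ g)\Vert_{L^{p_0}}\lesssim \Big\Vert \Big(\sum_i 2^{2is}|\Delta_i f|^2|S_{i-1}g|^2\Big)^{1/2}\Big\Vert_{L^{p_0}}.
\]
Next I bound $|S_{i-1}g|\le Mg$, the Hardy--Littlewood maximal function, and apply H\"older with $\frac1{p_0}=\frac1{p_1}+\frac1{p_2}$. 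This gives $\lesssim \Vert (1+D^2)^{s/2}f\Vert_{L^{p_1}}\Vert Mg\Vert_{L^{p_2}}$, and $\Vert Mg\Vert_{L^{p_2}}\lesssim\Vert g\Vert_{L^{p_2}}$ since $p_2>1$. The term $f\prec g$ is symmetric, with the roles of $(p_1,p_2)$ replaced by $(p_1',p_2')$, and produces the second summand on the right-hand side.

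\emph{Resonant term.} This is the main obstacle, because $\Delta_if\Delta_jg$ with $|i-j|\le1$ is supported in a ball of radius $\sim 2^i$ rather than an annulus, so no square-function identity applies directly. Here the assumption $s\ge 0$ is used. For $s>0$, I write $\Delta_k(f\circ g)=\sum_{i\gtrsim k}\Delta_k(\Delta_i f\,\tilde\Delta_i g)$ and bound
\[
2^{ks}|\Delta_k(f\circ g)|\lesssim \sum_{i\ge k-3}2^{(k-i)s}\,M\big(2^{is}|\Delta_if|\,|\tilde\Delta_i g|\big).
\]
Since $2^{(k-i)s}$ is summable, Young's inequality in $\ell^2$ followed by Fefferman--Stein gives
\[
\Vert (1+D^2)^{s/2}(f\circ g)\Vert_{L^{p_0}}\lesssim \Big\Vert \Big(\sum_i 2^{2is}|\Delta_if|^2\Big)^{1/2}\, Mg\Big\Vert_{L^{p_0}}.
\]
H\"older then yields the first summand. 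For $s=0$ the statement reduces to H\"older's inequality $\Vert fg\Vert_{L^{p_0}}\le\Vert f\Vert_{L^{p_1}}\Vert g\Vert_{L^{p_2}}$, so no decomposition is needed.

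Finally, the low-frequency block $\Delta_{-1}$ and the inhomogeneous weight $(1+D^2)^{s/2}$ in place of $|D|^s$ only affect constants, since on the torus the zero mode is bounded by $L^p$ norms. Summing the three estimates proves the inequality. As an alternative one could simply cite the classical Kato--Ponce/Grafakos--Oh result on $\R^d$ and transfer it to $\mathbb{T}^d$, but I would present the paraproduct argument above, taking care over the resonant term.
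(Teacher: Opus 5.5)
Your argument is correct, but the route differs from the paper's. The paper does not prove the lemma at all. It only refers to the literature for the classical fractional Leibniz (Kato--Ponce type) inequality, which is classically stated on $\mathbb{R}^d$, and leaves the passage to $\mathbb{T}^2$ implicit.

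You instead give a self-contained proof directly on the torus, using the Bony decomposition of Definition \ref{def:paraproducts}:
\begin{itemize}
\item The two paraproducts are handled by the annulus-support square-function lemma, the pointwise bound $|S_{i-1}g|\lesssim Mg$, and H\"older.
\item The resonant term is handled by the ball-support version: $s>0$ makes $2^{(k-i)s}$ summable, and $p_0>1$ allows Fefferman--Stein.
\item The case $s=0$ is plain H\"older.
\end{itemize}

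What your approach buys is transparency about where each hypothesis enters: $p_2,p_2'>1$ for the maximal function, and $p_0>1$ together with $s>0$ for the resonant term. It also avoids any transference step. The price is that you take two facts on $\mathbb{T}^d$ as known: the Triebel--Lizorkin characterization $\|(1+D^2)^{s/2}h\|_{L^p}\sim\|(\sum_j 2^{2js}|\Delta_jh|^2)^{1/2}\|_{L^p}$, and the vector-valued maximal inequality. Both are standard.

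One point you should state explicitly. That characterization requires $1<p<\infty$, so your proof covers $1<p_1,p_1'<\infty$ (hence $p_0<\infty$) and $1<p_2,p_2'\le\infty$. The endpoint cases $p_1=\infty$ or $p_1'=\infty$, which literature versions such as Grafakos--Oh also allow, are not reached by your argument; there one needs a Carleson-measure type argument instead of the square function. This is harmless for the paper. In Lemma \ref{lem:generalbounds} the lemma is applied with $p_0=\alpha$, $p_1=\frac{p\alpha}{p-q\alpha}$, $p_2=\frac{p}{q}$, $p_1'=\frac{p\alpha}{p-\alpha}$, $p_2'=p$, all finite.
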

\begin{proof}
    See \cite{1f76562f-a7be-3b2a-b3d8-37d20668fca0}.
\end{proof}
\begin{lem}\label{lem:interpolation}
    Let $0\leq\theta\leq1$, $p_0,p_1,p_2>1$ and $s,s_1,s_2\geq0$ be such that 
    \[\frac{1}{p_0}=\frac{\theta}{p_1}+\frac{1-\theta}{p_2}\quad\text{and}\quad s=\theta s_1+(1-\theta) s_2.\] 
    Then
    \begin{align*}
        \Vert f\Vert_{W^{s,p_0}}\leq\Vert f\Vert_{W^{s_1,p_1}}^\theta \Vert f\Vert_{W^{s_2,p_2}}^{1-\theta}
    \end{align*}
    for all $f\in W^{s_1,p_1}\cap W^{s_2,p_2}$.
\end{lem}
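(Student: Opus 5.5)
The plan is to prove the inequality by complex interpolation, via an explicit analytic family and a three-lines argument. This gives an isometric-type bound exactly when the imaginary powers of the Bessel potential act as contractions. I think that condition may fail, so I expect the argument below to deliver constant $1$ only in special cases; the fallback is an explicit constant $C$ (details at the end).

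\emph{Set-up.} We use the norm $\Vert f\Vert_{W^{r,q}}=\Vert (1+D^2)^{\frac r2}f\Vert_{L^q}$, as in Lemma~\ref{lem:leibnitz}. By density we may assume $f$ is a trigonometric polynomial, and by homogeneity we may normalise $\Vert f\Vert_{W^{s_1,p_1}}=\Vert f\Vert_{W^{s_2,p_2}}=1$. The goal then becomes $\Vert f\Vert_{W^{s,p_0}}\le 1$.

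\emph{Duality and the analytic family.} By $L^{p_0}$–$L^{p_0'}$ duality it suffices to show
\[
\Big|\int_{\mathbb{T}^d}(1+D^2)^{\frac s2}f\,\bar g\,dx\Big|\le 1
\]
for every simple $g$ with $\Vert g\Vert_{L^{p_0'}}=1$. Write $g=|g|\,\mathrm{sgn}(g)$. On the strip $0\le \mathrm{Re}\,z\le 1$ define
\[
\Psi(z)=\int_{\mathbb{T}^d}(1+D^2)^{\frac{z s_1+(1-z)s_2}{2}}f\;\overline{|g|^{p_0'\left(\frac{z}{p_1'}+\frac{1-z}{p_2'}\right)}\mathrm{sgn}(g)}\,dx .
\]
Since $f$ has finitely many Fourier modes and $g$ is simple, $\Psi$ is entire and bounded on the strip. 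Moreover $\Psi(\theta)$ is exactly the pairing above, because $s=\theta s_1+(1-\theta)s_2$ and $\frac1{p_0'}=\frac{\theta}{p_1'}+\frac{1-\theta}{p_2'}$.

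\emph{Boundary estimates.} On the line $\mathrm{Re}\,z=1$, write $z=1+iy$. Hölder's inequality gives
\[
|\Psi(1+iy)|\le \Vert (1+D^2)^{\frac{iy(s_1-s_2)}{2}}(1+D^2)^{\frac{s_1}2}f\Vert_{L^{p_1}}\,\Vert g\Vert_{L^{p_0'}}^{p_0'/p_1'} .
\]
The line $\mathrm{Re}\,z=0$ is handled in the same way with $s_2,p_2$ in place of $s_1,p_1$. Once both boundary values are bounded by $1$, the Hadamard three-lines lemma yields $|\Psi(\theta)|\le 1$, and taking the supremum over $g$ gives the claim.

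\emph{Main obstacle and fallback.} The crux is controlling the imaginary powers $(1+D^2)^{iy(s_1-s_2)/2}$ on $L^{p_i}$. When $s_1=s_2$ they are the identity. When $p_1=p_2=2$ they are unitary by Plancherel. In both of these cases the three-lines lemma applies verbatim and gives constant exactly $1$. For general $p$, the Mihlin–Hörmander theorem (already used in Lemma~\ref{compactZ}) only gives an operator norm $M(y)\lesssim (1+|y|)^{N}$. This polynomial growth can be absorbed by Hirschman's refinement of the three-lines lemma, but the resulting constant is $\exp\big(\int \log M\,d\omega_\theta\big)$, with $\omega_\theta$ the harmonic measure of the strip at $\theta$. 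That constant is not $1$.

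So for general exponents this plan only reaches $\Vert f\Vert_{W^{s,p_0}}\le C\Vert f\Vert_{W^{s_1,p_1}}^\theta\Vert f\Vert_{W^{s_2,p_2}}^{1-\theta}$ with a constant $C$ depending only on $s_1,s_2,p_1,p_2,\theta,d$. To get constant $1$ in general, the first thing I would try is to rework the analytic family so the imaginary powers do not appear on the boundary lines; I do not see how to do this. If it cannot be done, the constant-$1$ statement is not established by this argument, and as far as I can see it may fail for general exponents. The bound with constant $C$ still suffices for the later use in Lemma~\ref{lem:generalbounds}, where only $\lesssim$ is needed.
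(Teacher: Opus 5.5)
Your suspicion is right, and it can be sharpened: with constant $1$ the inequality is false, so no method can close the step you could not close. Use your convention $\|f\|_{W^{r,q}}=\|(1+D^2)^{r/2}f\|_{L^q}$, take functions of $x_1$ only, and set $s_1=2$, $s_2=0$, $\theta=\tfrac12$, $p_0=p_1=p_2=p$. Let $\varphi(x)=\sum_{k\ \mathrm{odd}}k^{-3}\sin kx$. Then $\varphi=\tfrac{\pi}{8}x(\pi-x)$ on $[0,\pi]$ and $\varphi''=-\tfrac{\pi}{4}\,\mathrm{sgn}(\sin x)$. Moreover $(|D|\varphi)(\tfrac{\pi}{2})=\sum_{j\ge0}(-1)^j(2j+1)^{-2}=G$, Catalan's constant. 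Hence
\[
\frac{\||D|\varphi\|_{L^\infty}}{\|\varphi\|_{L^\infty}^{1/2}\,\|\varphi''\|_{L^\infty}^{1/2}}\ \ge\ \frac{8\sqrt{2}\,G}{\pi^2}\approx 1.05 .
\]
Since $\|h\|_{L^p}\to\|h\|_{L^\infty}$ as $p\to\infty$, the same ratio with $L^p$ norms exceeds $1$ for some finite $p$. Now let $f_\lambda(x)=\varphi(\lambda x)$ with $\lambda\in\N$. Then
\begin{itemize}
\item $\|f_\lambda\|_{L^p}=\|\varphi\|_{L^p}$,
\item $\|(1+D^2)f_\lambda\|_{L^p}=\lambda^2\|\varphi''\|_{L^p}+O(1)$,
\item $\|(1+D^2)^{1/2}f_\lambda\|_{L^p}=\lambda\||D|\varphi\|_{L^p}+O(\lambda^{-1})$.
\end{itemize}
So $\|f_\lambda\|_{W^{1,p}}>\|f_\lambda\|_{W^{2,p}}^{1/2}\|f_\lambda\|_{L^p}^{1/2}$ for large $\lambda$. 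With $\varphi'$ in place of $|D|\varphi$, the same $\varphi$ gives ratio $\sqrt2$ (Landau), which rules out constant $1$ for integer-order norms as well. The paper's proof is only a citation of Brezis--Mironescu, and their Gagliardo--Nirenberg theorem carries a multiplicative constant. The lemma should therefore read $\lesssim$, and that is all Lemma~\ref{lem:generalbounds} uses.

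What you do prove is thus the correct statement, by a different route from the paper's black-box citation.
\begin{itemize}
\item Brezis--Mironescu work in the Gagliardo--Slobodeckij scale and cover $1\le p_i\le\infty$, including the exceptional cases.
\item Your Stein interpolation of the analytic family $z\mapsto(1+D^2)^{(zs_1+(1-z)s_2)/2}$ works directly with the Bessel-potential norm the paper actually uses in Lemmas~\ref{lem:leibnitz} and~\ref{lem:generalbounds}. For non-integer $s$ and $p\neq2$ these two scales differ.
\item Your argument needs only Mihlin and Hirschman's lemma. It also shows the constant is governed by $\|(1+D^2)^{i\tau}\|_{L^{p_i}\to L^{p_i}}$, which explains why you get $1$ when $s_1=s_2$ or $p_1=p_2=2$.
\item Its price is $1<p_1,p_2<\infty$, since imaginary powers are unbounded on $L^\infty$. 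The paper's application ($p_1=2$, $p_2=p$) lies in this range.
\end{itemize}

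Two small repairs are needed.
\begin{itemize}
\item As written, $\Psi$ is not holomorphic, because the bar conjugates the complex exponent. Pair instead with $|g|^{p_0'(z/p_1'+(1-z)/p_2')}\,\overline{\mathrm{sgn}(g)}$.
\item You cannot normalise both norms to $1$ by scaling $f$. Either multiply $\Psi$ by $a^{-z}b^{z-1}$, where $a=\|f\|_{W^{s_1,p_1}}$ and $b=\|f\|_{W^{s_2,p_2}}$, or use the general three-lines bound $M_0^{1-\theta}M_1^{\theta}$.
\end{itemize}
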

\begin{proof}
    See \cite{brezis:hal-01626613}.
    \end{proof}

\section{Gaussian Measure}\label{GaussianMeasure}

We collect some useful concepts, details can be found, e.g.,  in \cite{stroock2023gaussian} and \cite{Werner:2020rfd}.
\begin{defi}[Gaussian Random Variable]\label{definitionGRV}
 Let $(\Omega,\mathcal{F},\mathbb{P})$ be a probability space. We call a random variable $X:\Omega\to \R$ Gaussian if $X_*\mathbb{P}=\mathbb{P}\circ X^{-1}$ is a Gaussian measure on $\R$.\\
 Let $B$ be a Banach space. A $B$-valued random variable $X$ is said to be a centered Gaussian random variable if, for all $\xi\in B'$, $\langle X,\xi\rangle_{B,B'}$ is a Gaussian random variable with mean zero.
\end{defi}

\begin{defi}[Gaussian Process]
A stochastic process $(X_t)_{t\in T}$, where $T$ is some index set, not necessarily countable or finite-dimensional, is called centered Gaussian process if for any finite set $t_1<t_2<...<t_n$ the vector random variable $(X_{t_1},X_{t_2},...,X_{t_n})$ is Gaussian with mean zero. The covariance is given by $c(t_1,t_2)=\mathbb{E}[X_{t_1}X_{t_2}]$.\\
 It is common to call $X$ a random field if $T\not\subset \R$.
\end{defi}

\begin{defi}[Centered Gaussian Family]
    Given a probability space $(\Omega,\mathcal{F},\mathbb{P})$, a Gaussian family is a subspace $G$ of $L^2(\mathbb{P})$ of which each element is Gaussian. It is centered if all its elements have mean zero.
\end{defi}
\begin{defi}[Covariance Function]
Given a Gaussian family $G$, we call $c:G^2\to\R,\ c(\xi,\eta)=\int_\Omega \xi(x)\eta(x)\ \mathbb{P}(dx)$ the covariance function.
\end{defi}
%Definition of Gaussian family and covariance fuction
Now, we are able to define a Gaussian measure on an infinite-dimensional Banach space $B$. 
\begin{defi}[Gaussian Measure]\label{gaussianmeasure}
  A Borel probability measure $\mathcal{W}$ on $B$ is said to be a Gaussian measure if $\{ \langle\cdot,\xi\rangle_{B,B'}:\xi\in B'\}$  is a centered Gaussian family under $\mathcal{W}$.
\end{defi}
Note that in this case the covariance function has the following form:
\begin{align*}
    c: B'^2\to\R,\ (\xi,\eta)\mapsto \int_B \langle x,\xi\rangle_{B,B'} \langle x,\eta\rangle_{B,B'}\ \mathcal{W}(dx).
\end{align*}\\\\
In the following we construct the Gaussian measure as a distribution of a Gaussian random field. We consider the Greens's function that corresponds to $(-\Delta+\Delta^2)$ with periodic boundary conditions on $\mathbb{T}^d$ with zero mean:
\begin{align*}
    G(x,y)=\sum_{k\in\Z^d\setminus\{0\}}\frac{e^{2\pi i k\cdot(x-y)}}{|k|^2(2\pi)^2+|k|^4(2\pi)^4}.
\end{align*}
Having this we define a Gaussian process $(\Gamma(f))_{f\in L^2(\mathbb{T}^d)}$ through the covariance function 
\begin{align*}
    c(f,g)=\mathbb{E}[\Gamma(f)\Gamma(g)]=\int_{\mathbb{T}^d\times\mathbb{T}^d}G(x,y)f(x)g(y)\ dx \ dy=\langle f,(-\Delta+\Delta^2)^{-1}g\rangle_{L^2(\mathbb{T}^d)}.
\end{align*}
The existence of this field follows by the symmetry and positive-definiteness of $c$. \\
To get that the trajectories are bounded linear functionals we need to restrict $\Gamma$ to a smaller function space. \\
It is well known that there is an orthonormal basis $(\varphi_i)_{i\in\N}$ of eigenfunctions of $(-\Delta+\Delta^2)$ in $L^2(\mathbb{T}^d)$. Hereafter, we exclude the constant function. We write $\lambda_i$ for the associated positive eigenvalues. Note that the Gaussian Free Field on periodic mean zero $L^2$-functions is completely determined by $(\Gamma(\varphi_i))_{i\in\N}$ through 
\begin{align*}
    \Gamma(f)=\sum_{i\in\N}\langle f,\varphi_i\rangle_{L^2} \Gamma(\varphi_i)
\end{align*}
since $(\lambda_i^\frac{1}{2}\Gamma(\varphi_i))_{i\in\N}$ is a sequence of independent standard Gaussian random variables. This definition provides linearity of $\Gamma$.\\
It remains to show that this sum converges absolutely. First, note that by Weyl's law $\sum_{i\in\N}|\lambda_i^{-\beta}|<\infty$ if and only if $\beta>\frac{d}{4}$. Next, we define a subspace $\mathscr{H}^s$ of $L^2(\mathbb{T}^d)$ with zero mean through
\begin{align*}
    \Vert f\Vert_{\mathscr{H}^s}^2\coloneq \sum_{i\in\N}\lambda_i^s\langle f,\varphi_i\rangle_{L^2}^2<\infty.
\end{align*}
Let $s>\frac{d}{4}-1$. This space coincides with the Sobolev space $\dot{H}^s$, so we use this notation from now on. Then Cauchy-Schwarz provides for $f\in \dot{H}^s$
\begin{align*}
    \sum_{i\in\N}|\langle f,\varphi_i\rangle_{L^2} \Gamma(\varphi_i)|\leq \left(\sum_{i\in\N}\lambda_i^s\langle f,\varphi_i\rangle_{L^2}^2\right)^\frac{1}{2}\left(\sum_{i\in\N}\frac{\Gamma(\varphi_i)^2}{\lambda_i^s}\right)^\frac{1}{2}.
\end{align*}
The right-hand side is finite for Lebesgue-almost every $x\in{\mathbb{T}^d}$, because 
\begin{align*}
    \sum_{i\in\N}\frac{\mathbb{E}[\lambda_i\Gamma(\varphi_i)^2]}{|\lambda_i^{s+1}|}=\sum_{i\in\N}\frac{1}{|\lambda_i^{s+1}|}<\infty.
\end{align*}
Since we want almost every trajectory to be in the dual space of $\dot{H}^s$, we need to check continuity.
\begin{align*}
    |\Gamma(f)-\Gamma(g)|=|\Gamma(f-g)|&\leq \sum_{i\in\N}|\langle f-g,\varphi_i\rangle_{L^2} \Gamma(\varphi_i)|\\&\leq \left(\sum_{i\in\N}\lambda_i^s\langle f-g,\varphi_i\rangle_{L^2}^2\right)^\frac{1}{2}\left(\sum_{i\in\N}\frac{\Gamma(\varphi_i)^2}{\lambda_i^s}\right)^\frac{1}{2}\\
    &=\left(\sum_{i\in\N}\frac{\Gamma(\varphi_i)^2}{\lambda_i^s}\right)^\frac{1}{2}\Vert f-g\Vert_{H^{2s}}.
\end{align*}
At the beginning it was mentioned that the distribution of this field is a Gaussian measure on $(\dot{H}^{2s})'$. We write $\tilde{\Gamma}$ for our Gaussian free field, but as a function on ${\mathbb{T}^d}$ that maps to the trajectories $\{f|f:\dot{H}^{2s}\to\R\}$. We already showed in the preceding computation that for Lebesgue-almost every $x\in{\mathbb{T}^d}$ the value $\tilde{\Gamma}(x)$ is an element of $(\dot{H}^{2s})'$. The distribution is given by
\begin{align*}
    \tilde{\Gamma}_*\mathcal{L}^d:\mathscr{B}_{(H^{2s})'}\to \R.
\end{align*}
Now, it is easy to check that this is indeed a Gaussian measure in the sense of Definition \ref{gaussianmeasure}. For every $\xi\in \dot{H}^{2s}$ the random variable $\langle\cdot,\xi\rangle_{(H^{2s})',H^{2s}}$ is Gaussian with respect to $\tilde{\Gamma}_*\mathcal{L}^d$ since
\begin{align*}
     (\langle\cdot,\xi\rangle_{(H^{2s})',H^{2s}})_*\tilde{\Gamma}_*\mathcal{L}^d= \mathcal{L}^d(\tilde{\Gamma}^{-1}(\langle\cdot,\xi\rangle_{(H^{2s})',H^{2s}}^{-1})= \mathcal{L}^d(\langle\tilde{\Gamma},\xi\rangle_{(H^{2s})',H^{2s}}^{-1})= \mathcal{L}^d(\Gamma(\xi)^{-1})
\end{align*}
and $\Gamma(\xi)$ is a Gaussian random variable for every $\xi\in \dot{H}^{2s}$.

\section{Proof of Lemma 2.1}
\begin{lem}\label{lem:F*proof}
  Let $\mu$ be a probability measure on $X$. Then the functional $F:L^\infty(X)\to \R$, $f\mapsto \log\left(\int_X\exp(f(x))\ \mu(dx)\right)$ is continuous, convex and for $\gamma\in P_\mu(X)\subset (L^\infty(X))'$ the Fenchel conjugate is of the form
  \begin{align*}
      F^\ast(\gamma)=\int_X\log\left(\frac{d\gamma}{d\mu}(x)\right)\ \gamma(dx).
  \end{align*}
\end{lem}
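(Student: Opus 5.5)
The plan is to treat the three claims (continuity, convexity, form of the conjugate) separately, using only elementary inequalities and the Gibbs variational principle in its most basic form.

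\emph{Continuity.} For $f,g\in L^\infty(X)$ we have $e^{f}\le e^{g}e^{\Vert f-g\Vert_\infty}$ $\mu$-a.e. Integrating and taking logarithms gives $F(f)\le F(g)+\Vert f-g\Vert_\infty$. Exchanging the roles of $f$ and $g$ yields $|F(f)-F(g)|\le\Vert f-g\Vert_{L^\infty}$, so $F$ is $1$-Lipschitz. Finiteness of $F$ is clear, since $e^{-\Vert f\Vert_\infty}\le\int e^f\,d\mu\le e^{\Vert f\Vert_\infty}$.

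\emph{Convexity.} Take $\lambda\in(0,1)$ and apply H\"older's inequality with exponents $1/\lambda$ and $1/(1-\lambda)$:
\[
\int e^{\lambda f+(1-\lambda)g}\,d\mu\le\Bigl(\int e^f\,d\mu\Bigr)^\lambda\Bigl(\int e^g\,d\mu\Bigr)^{1-\lambda}.
\]
Taking logarithms gives $F(\lambda f+(1-\lambda)g)\le\lambda F(f)+(1-\lambda)F(g)$.

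\emph{Conjugate.} Fix $\gamma\in P_\mu(X)$ and write $h=\frac{d\gamma}{d\mu}>0$ $\mu$-a.e.; $\gamma$ acts on $L^\infty$ by $f\mapsto\int f\,d\gamma$. We need
\[
\sup_{f\in L^\infty}\Bigl(\int f\,d\gamma-\log\int e^f\,d\mu\Bigr)=\int\log h\,d\gamma.
\]

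For the upper bound, fix $f$ and let $\mu_f=e^f\mu/\int e^f\,d\mu$, which is a probability measure equivalent to $\mu$. Suppose first that $\int h\log h\,d\mu<\infty$. Then
\[
\int\log h\,d\gamma-\int f\,d\gamma+\log\int e^f\,d\mu=\int\log\frac{d\gamma}{d\mu_f}\,d\gamma=H(\gamma|\mu_f)\ge0,
\]
where nonnegativity follows from Jensen's inequality applied to the convex function $t\log t$ under $\mu_f$. If $\int\log h\,d\gamma=+\infty$, the upper bound is trivial. Note that $\int\log h\,d\gamma\in[0,\infty]$ is always well defined, since $t\log t\ge t-1$ gives a lower bound on the negative part.

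For the lower bound, the natural choice is $f=\log h$, which formally gives $\int\log h\,d\gamma-\log\int h\,d\mu=\int\log h\,d\gamma$. However, $\log h$ need not be bounded, and I expect this to be the main obstacle. I would truncate, setting $f_n=(\log h\vee -n)\wedge n\in L^\infty$, and handle the two terms separately.
\begin{itemize}
\item First term: $\int f_n\,d\gamma\to\int\log h\,d\gamma$. The positive parts increase, so monotone convergence applies. The negative parts are dominated by $(\log h)^-$, which is $\gamma$-integrable because $h(\log h)^-\le e^{-1}$; dominated convergence handles them.
\item Second term: $\log\int e^{f_n}\,d\mu\to\log\int h\,d\mu=0$. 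Here $e^{f_n}\le\max(h,1)$, which is $\mu$-integrable, and $e^{f_n}\to h$ pointwise, so dominated convergence applies.
\end{itemize}
Hence $F^*(\gamma)\ge\limsup_n\bigl(\int f_n\,d\gamma-F(f_n)\bigr)=\int\log h\,d\gamma$. This argument works whether the right-hand side is finite or $+\infty$.

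Combining the two bounds gives the asserted formula for $F^*(\gamma)$.
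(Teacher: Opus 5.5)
Your proof is correct and follows essentially the same route as the paper: H\"older for convexity, Jensen (in your case phrased as nonnegativity of the relative entropy $H(\gamma|\mu_f)$) for the upper bound on $F^\ast$, and truncation of $\log\frac{d\gamma}{d\mu}$ combined with monotone/dominated convergence for the lower bound. Your two-sided truncation $f_n=(\log h\vee -n)\wedge n$ is slightly more careful than the paper's $\log\frac{d\gamma}{d\mu}\wedge n$, which need not lie in $L^\infty$. Likewise, your $1$-Lipschitz estimate gives continuity more directly than the paper's dominated-convergence remark.
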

\begin{proof}
    We start with the convexity. Let $f,g\in L^\infty(X)$ and $\lambda\in(0,1)$. Then by H\"older's inequaliy,
\begin{align*}
    F(\lambda f+(1-\lambda)g)&=\log\left(\int_X\exp(\lambda f(x)+(1-\lambda)g(x))\ \mu(dx)\right)\\
    %&=\log\left(\int_X\exp(\lambda f(x))\,\exp((1-\lambda)g(x))\ \mu(dx)\right)\\
    &=\log\left(\int_X\exp(f(x))^\lambda\,\exp(g(x))^{(1-\lambda)}\ \mu(dx)\right)\\
    &\leq\log\left(\left[\int_X\exp(f(x))\ \mu(dx)\right]^\lambda\left[\int_X\exp(g(x))\ \mu(dx)\right]^{1-\lambda}\right)\\
    &=\lambda\log\left(\int_X\exp(f(x))\ \mu(dx)\right)+(1-\lambda)\log\left(\int_X\exp(g(x))\ \mu(dx)\right)\\
    &=\lambda F(f)+(1-\lambda)F(g).
\end{align*}
Therefore, the functional is convex. \\
Continuity follows via dominated convergence since $(\exp(f_n))_n$ is bounded for $f_n\to f$ in $L^\infty(X)$.\\
We continue with the Fenchel conjugate. By Jensen's inequality we have
\begin{align}\label{eq:fenchelupper}
    F^*(\gamma)&=\sup_{g\in L^\infty(X)}\left\{ \int_X g(x)\ \gamma(dx)-F(g)\right\}\nonumber\\
    &=\sup_{g\in L^\infty(X)}\left\{ \int_X g(x)\ \gamma(dx)-\log\left(\int_X\exp(g(x))\ \mu(dx)\right)\right\}\nonumber\\
    &=\sup_{g\in L^\infty(X)}\left\{ \int_X \log(\exp(g(x)))\ \gamma(dx)-\log\left(\int_X\exp(g(x))\frac{d\mu}{d\gamma}(x)\ \gamma(dx)\right)\right\}\nonumber\\
    &\leq\sup_{g\in L^\infty(X)}\left\{ \int_X \log(\exp(g(x)))-\log\left(\exp(g(x))\frac{d\mu}{d\gamma}(x)\right)\ \gamma(dx)\right\}\nonumber\\
    &=\sup_{g\in L^\infty(X)}\left\{ \int_X \log\left(\frac{d\gamma}{d\mu}(x)\right)\ \gamma(dx)\right\}\nonumber\\
    &=\int_X \log\left(\frac{d\gamma}{d\mu}(x)\right)\ \gamma(dx).
\end{align}
Plugging in the  specific sequence of functions $g_n=\log\left(\frac{d\gamma}{d\mu}\right)\wedge n$ and applying dominated convergence and Fatou's lemma yields
\begin{align}\label{eq:fenchellower}
    &F^*(\gamma)\nonumber\\&\geq \liminf_n\left(\int_X\left(\log\left(\frac{d\gamma}{d\mu}(x)\right)\wedge n\right)\ \gamma(dx)-F\left(\log\left(\frac{d\gamma}{d\mu}\right)\wedge n\right)\right)\nonumber\\    &=\liminf_n\left(\int_X\left(\log\left(\frac{d\gamma}{d\mu}(x)\right)\wedge n\right)\ \gamma(dx)-\log\left(\int_X \exp\left( \log\left(\frac{d\gamma}{d\mu}\right)\wedge n\right)\ \mu(dx)\right)\right)\nonumber\\    &\geq\liminf_n\left(\int_X\left(\log\left(\frac{d\gamma}{d\mu}(x)\right)\wedge n\right)\ \gamma(dx)\right)\nonumber\\&-\limsup_n\left(\log\left(\int_X \exp\left( \log\left(\frac{d\gamma}{d\mu}\right)\wedge n\right)\ \mu(dx)\right)\right)\nonumber\\
    &\geq\int_X\log\left(\frac{d\gamma}{d\mu}(x)\right)\ \gamma(dx)-\log\left(\int_X\frac{d\gamma}{d\mu}(x)\ \mu(dx)\right)\nonumber\\
    &=\int_X\log\left(\frac{d\gamma}{d\mu}(x)\right)\ \gamma(dx)-\log(1)\nonumber\\
    &=\int_X\log\left(\frac{d\gamma}{d\mu}(x)\right)\ \gamma(dx).
\end{align}

Therefore, combining \eqref{eq:fenchellower} and \eqref{eq:fenchelupper}, we obtain 
\begin{eqnarray*}
F^\ast(\gamma)=\int_X\log\left(\frac{d\gamma}{d\mu}(x)\right)\ \gamma(dx).
\end{eqnarray*}
\end{proof}

\section{Estimates for $Z_T(u)$}\label{sec:ZTu}

\begin{lem}\label{lem:H^1boundZ}
  Let $\alpha\in \R $. For any $v\in L^2([0,\infty),\dot{H}^\alpha)$ we have
  \begin{align*}
      \sup_{0\leq t\leq T} \Vert \nabla Z_t(v)\Vert_{H^{\alpha+1}}^2\leq \int_0^T\Vert v_s \Vert_{H^\alpha}^2\ ds.
  \end{align*}
\end{lem}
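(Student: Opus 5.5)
The plan is to argue entirely on the Fourier side, where everything is diagonal. Recall $Z_t(v)=\int_0^t J_s v_s\,ds$ with $J_s=\sigma_s(D)(|D|^2+|D|^4)^{-1/2}$. Hence for each mode $n\in\Z^2\setminus\{0\}$
\begin{align*}
\widehat{\nabla Z_t(v)}(n)=\int_0^t \frac{i n\,\sigma_s(n)}{(|n|^2+|n|^4)^{1/2}}\,\hat v_s(n)\,ds .
\end{align*}
The $H^{\alpha+1}$ norm of $\nabla Z_t(v)$ is therefore $\sum_n (1+|n|^2)^{\alpha+1}|\widehat{\nabla Z_t(v)}(n)|^2$, up to the equivalence constant of the chosen norm. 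Since $|n|^2/(|n|^2+|n|^4)=1/(1+|n|^2)$, the multiplier $\nabla J_s$ gains exactly one derivative. The prefactor becomes $(1+|n|^2)^{\alpha}\,|\int_0^t \sigma_s(n)\hat v_s(n)\,ds|^2$.

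Next, for each fixed $n$ I apply Cauchy–Schwarz in time:
\begin{align*}
\Big|\int_0^t \sigma_s(n)\hat v_s(n)\,ds\Big|^2\le \Big(\int_0^t\sigma_s^2(n)\,ds\Big)\Big(\int_0^t|\hat v_s(n)|^2\,ds\Big).
\end{align*}
By definition $\sigma_s^2(n)=\frac{d}{ds}\zeta_s^2(n)$, so $\int_0^t\sigma_s^2(n)\,ds=\zeta_t^2(n)-\zeta_0^2(n)\le 1$. This uses $\zeta_0(n)=0$ (since $\zeta(s)=0$ for $s\ge1$ and $(1+|n|^2)^{1/2}/t\to\infty$) and $0\le\zeta\le1$. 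Summing over $n$ with weight $(1+|n|^2)^{\alpha}$ and using Tonelli then gives $\Vert\nabla Z_t(v)\Vert_{H^{\alpha+1}}^2\le\int_0^t\Vert v_s\Vert_{H^\alpha}^2ds\le\int_0^T\Vert v_s\Vert_{H^\alpha}^2ds$. Taking the supremum over $t\le T$ concludes.

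The only delicate point is the norm convention, since the inequality is claimed without a constant. I would use the Fourier-weight definition $(1+|n|^2)^{s}$ for $H^s$ on mean-zero functions, or equivalently $|n|^{2s}$, for which the multiplier identity $|n|^2/(|n|^2+|n|^4)\le |n|^{-2}$ works equally well. With the Littlewood–Paley norm of Definition \ref{def:besov}, the statement holds up to a harmless multiplicative constant, which is all that is used in Lemma \ref{lem:lubounds} and Lemma \ref{lem:generalbounds}. I would also note that the monotonicity of $\zeta$ makes $\sigma_s$ real and well defined as a square root, since $\zeta_s^2(n)$ is nondecreasing in $s$.
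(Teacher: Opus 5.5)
Your proof is correct and is essentially the paper's argument: diagonalize on the Fourier side, apply Cauchy--Schwarz in time mode by mode, and use $\int_0^t\sigma_s^2(n)\,ds=\zeta_t^2(n)\le 1$. The one difference is that you compute the symbol exactly, $|n|^2/(|n|^2+|n|^4)=(1+|n|^2)^{-1}$, rather than invoking the Mihlin--H\"ormander theorem as the paper does; this is simpler and gives the inequality with no multiplicative constant, matching the statement as written.
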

\begin{proof}We follow \cite[Lemma 2]{Barashkov_2020}. First, note that by Mihlin-Hörmanders multiplier theorem (see e.g. \cite[Theorem 2.78]{bahouri2011fourier}) it is sufficient to show
\begin{align*}
    \Vert \int_0^t\sigma_s(D)v_s\ ds\Vert^2_{H^\alpha}\leq \int_0^T\Vert v_s \Vert_{H^\alpha}^2\ ds.
\end{align*}
As $\sigma_t(D)$ is a Fourier multiplier it is diagonal w.r.t the Fourier basis $(e_k)_k$ of trigonometric polynomials. Using Parseval's identity and $|\rho|\leq 1$ this leads to
\begin{align*}
    \Vert \int_0^t\sigma_s(D)v_s\ ds\Vert^2_{H^\alpha}=&\Vert \int_0^t\sum_{k\in\Z^d\setminus\{0\}}\langle v_s,\sigma_s(D)e_k\rangle e_k\ ds\Vert^2_{H^\alpha}\\
    = &\Vert (1+|\cdot|^2)^\frac{\alpha}{2}\mathcal{F}(\int_0^t\sum_{k\in\Z^d\setminus\{0\}}\langle v_s,\sigma_s(D)e_k\rangle e_k\ ds)\Vert_{L^2}^2\\
    =&  \sum_{k\in\Z^d\setminus\{0\}}(1+|k|^2)^\alpha|\int_0^t\langle v_s,\sigma_s(D)e_k\rangle\ ds|^2\\
    \leq&  \sum_{k\in\Z^d\setminus\{0\}}(1+|k|^2)^\alpha\left(\int_0^t|\langle e_k,\sigma_s(D)e_k\rangle|^2\ ds\right)\left(\int_0^t|\langle e_k,v_s\rangle|^2\ ds\right)\\
    \leq& \left(\sup_{k\in\Z^d\setminus\{0\}}\int_0^t\langle e_k,\sigma_s(D)^2e_k\rangle\ ds\right) \sum_{k\in\Z^d\setminus\{0\}}(1+|k|^2)^\alpha\left(\int_0^t|\langle e_k,v_s\rangle|^2\ ds\right)\\
    =& \int_0^t \Vert v_s \Vert_{H^\alpha}^2\ ds\sup_{k\in\Z^d\setminus\{0\}}\int_0^t\langle e_k,\sigma_s(D)^2e_k\rangle\ ds\\
    \leq&\int_0^t \Vert v_s \Vert_{H^\alpha}^2\ ds\sup_{k\in\Z^d\setminus\{0\}}\langle e_k,\rho_t(D)^2e_k\rangle\\
    \leq&\int_0^T\Vert v_s \Vert_{H^\alpha}^2\ ds.\quad 
\end{align*}
\end{proof} 

\section{Wick-Powers}\label{Wick}
Wick-powers of the Gaussian Free Field have been studied also in the vectorial case, e.g. in \cite{10.1214/24-EJP1256}. As expected we can obtain similar regularities in our case. \\
Even though, we focused on two space dimensions we state here a more general result to reveal the difficulties that arise in higher dimensions. 

\begin{lem}\label{WickRegularity}
    Let $\alpha\in\{1,..,d\}^k$ with $k\in\N$ such that $\frac{k-1}{2k}d<1$. Then we have
\begin{align*}
    \sup_T\mathbb{E}\left[ \Vert \int_0^T...\int_0^{s_{k-1}} d(\partial_{\alpha_1}Y_{s_1})...d(\partial_{\alpha_k}Y_{s_k})\Vert^p_{B_{p,p}^{(1-\frac{d}{2})k-\kappa}}\right]<\infty
\end{align*}
for large $p$ and small $\kappa$. \\
In particular,
\begin{align*}
     \sup_T\mathbb{E}\left[ \Vert [\![\nabla W_T^k]\!]\Vert_{\mathscr{C}^{(1-\frac{d}{2})k-\kappa}}\right]<\infty.
\end{align*}

\end{lem}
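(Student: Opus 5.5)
The plan is the standard second-moment computation on Littlewood--Paley blocks, followed by Gaussian hypercontractivity to pass to $p$-th moments. I write $\mathbb{W}^\alpha_T$ for the iterated integral $\int_0^T\cdots\int_0^{s_{k-1}} d(\partial_{\alpha_1}Y_{s_1})\cdots d(\partial_{\alpha_k}Y_{s_k})$. It lies in the $k$-th Wiener chaos generated by $X$. Hence $\Delta_j\mathbb{W}^\alpha_T(x)$ does too for every $j$ and $x$. By Nelson's hypercontractivity, for every $p\geq 2$,
\[
\mathbb{E}|\Delta_j\mathbb{W}^\alpha_T(x)|^p\lesssim_{p,k}\bigl(\mathbb{E}|\Delta_j\mathbb{W}^\alpha_T(x)|^2\bigr)^{p/2}.
\]
As in the computation for $Y_t$ in the introduction, this gives
\[
\mathbb{E}\Vert \mathbb{W}^\alpha_T\Vert^p_{B^{s}_{p,p}}\lesssim\sum_j 2^{jsp}\int_{\mathbb{T}^d}\bigl(\mathbb{E}|\Delta_j\mathbb{W}^\alpha_T(x)|^2\bigr)^{p/2}dx .
\]
So the whole problem reduces to a uniform-in-$T$ bound on the second moment of each block.

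Next I compute that second moment explicitly in Fourier variables. Expanding each $d(\partial_{\alpha_i}Y_{s_i})=\sum_{n}\frac{i n_{\alpha_i}\sigma_{s_i}(n)}{(|n|^2+|n|^4)^{1/2}}e^{i\langle x,n\rangle}dB^n_{s_i}$ and applying Itô's isometry iteratively, the off-diagonal index pairings vanish. Because the time simplex is ordered, the pairings are in fact diagonal, up to the $k!$ symmetrisation bounded by a constant. The $x$-dependence cancels by translation invariance, and I expect
\[
\mathbb{E}|\Delta_j\mathbb{W}^\alpha_T(x)|^2\lesssim\sum_{n_1,\dots,n_k\neq0}\rho_j^2(n_1+\dots+n_k)\prod_{i=1}^k\frac{|n_i|^2\zeta_T^2(n_i)}{|n_i|^2+|n_i|^4}\le\sum_{m}\rho_j^2(m)\sum_{n_1+\dots+n_k=m}\prod_{i=1}^k\frac{1}{1+|n_i|^2}.
\]
Here I use $|n_{\alpha_i}|^2\le|n|^2$ and $\int_0^{s}\sigma_u^2(n)\,du=\zeta^2_{s}(n)\le1$, which removes the $T$-dependence. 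The ordering in time only makes the integral smaller than the product of the full integrals.

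The main obstacle is the discrete convolution estimate. I would prove by induction on $k$ that
\[
\sum_{n_1+\dots+n_k=m}\prod_{i=1}^k(1+|n_i|)^{-2}\lesssim (1+|m|)^{-2k+(k-1)d+\kappa'} .
\]
The induction step uses the standard lemma $\sum_{n}(1+|n|)^{-a}(1+|m-n|)^{-b}\lesssim(1+|m|)^{d-a-b}$ whenever $a,b<d$ and $a+b>d$. This is exactly where the hypothesis $\frac{k-1}{2k}d<1$ enters. It ensures that the intermediate exponents $2l-(l-1)d$ stay positive and the sums converge, with an arbitrarily small logarithmic loss $\kappa'$ in the borderline case. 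If an exponent $a$ exceeds $d$, the lemma instead gives $(1+|m|)^{-\min(a,b)}$, which is even better. Summing over the annulus $|m|\sim2^j$, which has $\sim2^{jd}$ points, then gives
\[
\mathbb{E}|\Delta_j\mathbb{W}^\alpha_T(x)|^2\lesssim2^{j(kd-2k+\kappa')}=2^{-2j((1-\frac d2)k-\frac{\kappa'}{2})} .
\]

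Inserting this into the Besov sum with $s=(1-\frac d2)k-\kappa$ and $\kappa>\kappa'/2$, the series $\sum_j2^{-jp(\kappa-\kappa'/2)}$ converges uniformly in $T$. This proves the first claim. For the second claim I use the embedding \eqref{besovembedding3}, $B^{s}_{p,p}\hookrightarrow B^{s-d/p}_{\infty,\infty}$, with $p$ large so that $d/p$ is absorbed into $\kappa$. Jensen's inequality then passes from the $p$-th moment to the first moment. Since $[\![\nabla W_T^k]\!]$ is a finite linear combination, with deterministic coefficients, of iterated integrals of the above form (Lemma \ref{Wick-Powers}), the triangle inequality finishes the argument.
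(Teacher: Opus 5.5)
Your proposal is correct for $d\ge 2$, which covers the paper's cases $d=2,3$, and it follows essentially the paper's route: reduction to second moments of Littlewood--Paley blocks, It\^{o}'s isometry with $\int_0^T\sigma_s^2(n)\,ds=\zeta_T^2(n)\le 1$, an inductive bound $\sum_{n_1+\dots+n_k=m}\prod_i(1+|n_i|)^{-2}\lesssim(1+|m|)^{-2k+(k-1)d+\kappa'}$, the $2^{jd}$ annulus count, and Besov summation; the differences are only that you cite the standard two-weight convolution lemma where the paper splits into three regions by hand, and that you make explicit the hypercontractivity step and the final embedding/Jensen step, which the paper leaves implicit. One small correction: your remark that the bound $(1+|m|)^{-\min(a,b)}$ is ``even better'' when an exponent exceeds $d$ is false, since it is worse than $(1+|m|)^{d-a-b}$; this case arises only for $d=1$, where the claimed regularity $(1-\frac d2)k$ is genuinely false for $k\ge 2$ and the paper's first-region estimate breaks down too, so nothing is lost for $d\ge 2$.
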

\begin{proof}
Since we have by Itô's isometry and the independencies of the Brownian motions $(B^n)_{n\in\Z^d\setminus\{0\}}$
\begin{align*}
    &\mathbb{E}\left[ \Vert \int_0^T...\int_0^{s_{k-1}} d(\partial_{\alpha_1}Y_{s_1})...d(\partial_{\alpha_k}Y_{s_k})\Vert^p_{B_{p,p}^\beta}\right]\\
    =&\sum_j2^{j\beta p}\mathbb{E}\left[ \Vert \Delta_j(\int_0^T...\int_0^{s_{k-1}} d(\partial_{\alpha_1}Y_{s_1})...d(\partial_{\alpha_k}Y_{s_k}))\Vert^p_{L^p} \right]\\
    \lesssim& \sum_j2^{j\beta p}\int_{\mathbb{T}^d}\mathbb{E}\left[ |\Delta_j(\int_0^T...\int_0^{s_{k-1}} d(\partial_{\alpha_1}Y_{s_1})...d(\partial_{\alpha_k}Y_{s_k}))(x)|^2 \right]^\frac{p}{2}\ dx\\
    \lesssim& \sum_j2^{j\beta p}\\\int_{\mathbb{T}^d}&\mathbb{E}\left[ \sum_{n_1,...n_k\in\Z^d\setminus\{0\}} \rho_j^2(n_1+...+n_k) \int_0^T...\int_0^{s_{k-1}}\frac{(n_1)_{\alpha_1}^2\sigma^2_{s_1}(n_1)}{(n_1)^2+(n_1)^4}...\frac{(n_k)_{\alpha_k}^2\sigma^2_{s_k}(n_k)}{(n_k)^2+(n_k)^4}\ ds_1...ds_k\right]^\frac{p}{2}\ dx\\
    \leq& \sum_j2^{j\beta p} \left(\sum_{n_1,...n_k\in\Z^d\setminus\{0\}} \rho_j(n_1+...+n_k) \int_0^T...\int_0^{s_{k-1}}\frac{(n_1)_{\alpha_1}^2\sigma^2_{s_1}(n_1)}{(n_1)^2+(n_1)^4}...\frac{(n_k)_{\alpha_k}^2\sigma^2_{s_k}(n_k)}{(n_k)^2+(n_k)^4}\ ds_1...ds_k\right)^\frac{p}{2}
 \end{align*}
it remains to estimate the second sum.
\begin{align*}
    \sum_{n_1,...n_k\in\Z^d\setminus\{0\}} \rho_j(n_1+...+n_k) \int_0^T...\int_0^{s_{k-1}}\frac{(n_1)_{\alpha_1}^2\sigma^2_{s_1}(n_1)}{(n_1)^2+(n_1)^4}...\frac{(n_k)_{\alpha_k}^2\sigma^2_{s_k}(n_k)}{(n_k)^2+(n_k)^4}\ ds_1...ds_k\\
    \leq \sum_{n_1,...n_k\in\Z^d\setminus\{0\}} \rho_j(n_1+...+n_k)\frac{1}{1+(n_1)^2}...\frac{1}{1+(n_k)^2}\\
    =\sum_{n_1\in\Z^d\setminus\{0\}} \rho_j(n_1)\sum_{n_2,...n_k\in\Z^d\setminus\{0\}} \frac{1}{1+(n_1-n_2)^2}...\frac{1}{1+(n_{k-1}-n_k)^2}\frac{1}{1+(n_k)^2}\\
    =\sum_{n_1\in\Z^d\setminus\{0\}} \rho_j(n_1)\sum_{n_2,...n_k\in\Z^d\setminus\{0\}} \frac{1}{1+(n_k)^2}\prod_{j=2}^k \frac{1}{1+(n_{j-1}-n_{j})^2}
\end{align*}
To understand this sum we prove by induction that for all $k\in\N$ we have
\[\sum_{n_2,...n_k\in\Z^d\setminus\{0\}} \frac{1}{1+(n_k)^2}\prod_{j=2}^k \frac{1}{1+(n_{j-1}-n_{j})^2}\] lies in $\mathcal{O}(|n_1|^{-2+\varepsilon+(k-1)(d-2)})$ for any small $\varepsilon>0$. We call this proposition $P(k)$.\\
For $k=1$ the assertion is trivially true. Now, we show the implication $P(k)\Rightarrow P(k+1)$. \\
So we assume that $P(k)$ holds for a fixed $k\in\N$.
\begin{align*}
    \sum_{n_2,...n_{k+1}\in\Z^d\setminus\{0\}} \frac{1}{1+(n_{k+1})^2}\prod_{j=2}^{k+1} \frac{1}{1+(n_{j-1}-n_{j})^2}\\
    =\sum_{n_2\in\Z^d\setminus\{0\}} \frac{1}{1+(n_1-n_2)^2}\sum_{n_3,...n_{k+1}\in\Z^d\setminus\{0\}} \frac{1}{1+(n_{k+1})^2}\prod_{j=3}^{k+1} \frac{1}{1+(n_{j-1}-n_{j})^2}\\
    \stackrel{P(k)}{\lesssim}\sum_{n_2\in\Z^d\setminus\{0\}} \frac{1}{1+(n_1-n_2)^2} (|n_2|)^{-2+\varepsilon+(k-1)(d-2)}\\
    \leq \sum_{|n_2|\leq\frac{|n_1|}{2}} \frac{1}{1+(n_1-n_2)^2} (|n_2|)^{-2+\varepsilon+(k-1)(d-2)}\\
    +\sum_{|n_1-n_2|\leq\frac{|n_1|}{2}} \frac{1}{1+(n_1-n_2)^2} (|n_2|)^{-2+\varepsilon+(k-1)(d-2)}\\
    +\sum_{else} \frac{1}{1+(n_1-n_2)^2} (|n_2|)^{-2+\varepsilon+(k-1)(d-2)}
\end{align*}
We estimate the three terms separately starting with the first. Here the difference $|n_1-n_2|$ scales as $|n_1|$.
\begin{align*}
    \sum_{|n_2|\leq\frac{|n_1|}{2}} \frac{1}{1+(n_1-n_2)^2} (|n_2|)^{-2+\varepsilon+(k-1)(d-2)}\\
    \lesssim \sum_{|n_2|\leq\frac{|n_1|}{2}} \frac{1}{1+(n_1)^2} (|n_2|)^{-2+\varepsilon+(k-1)(d-2)}\\
    =\frac{1}{1+(n_1)^2}\sum_{|n_2|\leq\frac{|n_1|}{2}}  (|n_2|)^{-2+\varepsilon+(k-1)(d-2)}\\
    \lesssim\frac{1}{1+(n_1)^2}\int_0^{\frac{|n_1|}{2}}r^{-2+\varepsilon+(k-1)(d-2)}r^{d-1}\ dr\\
    \lesssim |n_1|^{-2-2+\varepsilon+(k-1)(d-2)+d+\varepsilon}\\
    =|n_1|^{-2+2\varepsilon+k(d-2)}
\end{align*}
In the second term $|n_2|$ scales as $|n_1|$.
\begin{align*}
    \sum_{|n_1-n_2|\leq\frac{|n_1|}{2}} \frac{1}{1+(n_1-n_2)^2} (|n_2|)^{-2+\varepsilon+(k-1)(d-2)}\\\lesssim\sum_{|n_1-n_2|\leq\frac{|n_1|}{2}} \frac{1}{1+(n_1-n_2)^2} (|n_1|)^{-2+\varepsilon+(k-1)(d-2)}\\
    =(|n_1|)^{-2+\varepsilon+(k-1)(d-2)}\sum_{|n_1-n_2|\leq\frac{|n_1|}{2}} \frac{1}{1+(n_1-n_2)^2} \\
    \lesssim(|n_1|)^{-2+\varepsilon+(k-1)(d-2)}\int_0^\frac{|n_1|}{2}\frac{1}{1+r^2}r^{d-1}\ dr\\
    \lesssim (|n_1|)^{-2+\varepsilon+(k-1)(d-2)-2+d+\varepsilon}\\
    =|n_1|^{-2+2\varepsilon+k(d-2)}
\end{align*}
Lastly, in the third term the difference $|n_1-n_2|$ scales as $|n_2|$.
\begin{align*}
    \sum_{else} \frac{1}{1+(n_1-n_2)^2} (|n_2|)^{-2+\varepsilon+(k-1)(d-2)}\\
    \lesssim\sum_{else} \frac{1}{1+|n_2|^2} (|n_2|)^{-2+\varepsilon+(k-1)(d-2)}\\
    \sum_{else} (|n_2|)^{-2+\varepsilon+(k-1)(d-2)-2}\\
    \lesssim\int_{\frac{|n_1|}{2}} r^{-2+\varepsilon+(k-1)(d-2)-2}r^{d-1} \ dr\\
    \lesssim |n_1|^{-2+\varepsilon+(k-1)(d-2)-2+d+\varepsilon}\\
    = |n_1|^{-2+2\varepsilon+k(d-2)}
\end{align*}
Since $\varepsilon>0$ was arbitrary this shows that $P(k+1)$ holds true.\\
Taking the support of $\rho_j$ into account we can write
\begin{align*}
    \sum_{n_1\in\Z^d\setminus\{0\}} \rho_j(n_1)\sum_{n_2,...n_k\in\Z^d\setminus\{0\}} \frac{1}{1+(n_k)^2}\prod_{j=2}^k \frac{1}{1+(n_{j-1}-n_{j})^2}\lesssim 2^{dj}\cdot (2^j)^{-2+\varepsilon+(k-1)(d-2)}\\
    =2^{j(d-2+(k-1)(d-2)+\varepsilon}.
\end{align*}
Therefore, we have
\begin{align*}
    \mathbb{E}\left[ \Vert \int_0^T...\int_0^{s_{k-1}} d(\partial_{\alpha_1}Y_{s_1})...d(\partial_{\alpha_k}Y_{s_k})\Vert^p_{B_{p,p}^\beta}\right]\\
    \lesssim \sum_j 2^{j\beta p}(2^{j(d-2+(k-1)(d-2)+\varepsilon})^\frac{p}{2}.
\end{align*}
The series converges for $\beta<(1-\frac{d}{2})k-\frac{\varepsilon}{2}$.
\end{proof}

\end{document}